\numberwithin{equation}{section}
\newtheorem{theorem}{Theorem}[section]
\newtheorem*{theorem*}{Theorem}
\newtheorem{lemma}[theorem]{Lemma}
\newtheorem{problem}[theorem]{Problem}
\newtheorem{corollary}[theorem]{Corollary}
\newtheorem{statement}[theorem]{Statement}
\newtheorem{fact}[theorem]{Fact}
\theoremstyle{definition}
\newtheorem{remark}[theorem]{Remark}
\newtheorem{definition}[theorem]{Definition}
\DeclareMathOperator{\inter}{int}
\DeclareMathOperator{\diam}{diam}
\DeclareMathOperator{\dist}{dist}
\DeclareMathOperator{\supp}{supp}
\DeclareMathOperator{\Var}{Var}
\DeclareMathOperator{\Sim}{sim}
\DeclareMathOperator{\Dim}{Dim}
\newcommand{\N}{\mathbb{N}}
\newcommand{\R}{\mathbb{R}}
\newcommand{\Z}{\mathbb{Z}}
\newcommand{\E}{\mathbb{E}}
\renewcommand{\P}{\mathbb{P}}
\newcommand{\iA}{\mathcal{A}}
\newcommand{\iH}{\mathcal{H}}
\newcommand{\iI}{\mathcal{I}}
\newcommand{\iJ}{\mathcal{J}}
\newcommand{\iK}{\mathcal{K}}
\newcommand{\iR}{\mathcal{R}}
\newcommand{\iF}{\mathcal{F}}
\newcommand{\iU}{\mathcal{U}}
\newcommand{\iV}{\mathcal{V}}
\newcommand{\iW}{\mathcal{W}}
\newcommand{\iX}{\mathcal{X}}
\newcommand{\iZ}{\mathcal{Z}}
\newcommand{\eps}{\varepsilon}
\begin{document}

\title{Uniform dimension results \\ for fractional Brownian motion}
\author{Rich\'ard Balka}
\address{Current affiliation: Department of Mathematics, University of British Columbia, and Pacific Institute for the Mathematical Sciences,
1984 Mathematics Road, Vancouver, BC V6T 1Z2, Canada}
\address{Former affiliations: Department of Mathematics, University of Washington, Box 354350, Seattle, WA 98195-4350, USA and Alfr\'ed R\'enyi Institute of Mathematics, Hungarian Academy of Sciences, PO Box 127, 1364 Budapest, Hungary}
\email{balka@math.ubc.ca}
\thanks{The first author was supported by the
Hungarian Scientific Research Fund grant no.~104178.}

\author{Yuval Peres}
\address{Microsoft Research, 1 Microsoft Way, Redmond, WA 98052, USA}
\email{peres@microsoft.com}

\subjclass[2010]{28A78, 28A80, 60G17, 60G22, 60J65}

\keywords{fractional Brownian motion, uniform dimension results, Hausdorff dimension, packing dimension, Assouad dimension}

\begin{abstract}
Kaufman's dimension doubling theorem states that for a planar Brownian motion
$\{\mathbf{B}(t): t\in [0,1]\}$ we have
\[ \P(\dim \mathbf{B}(A)=2\dim A \textrm{ for all } A\subset [0,1])=1,\]
where $\dim$ may denote both Hausdorff dimension $\dim_H$ and packing dimension $\dim_P$.
The main goal of the paper is to prove similar uniform dimension results in the one-dimensional case.
Let $0<\alpha<1$ and let $\{B(t): t\in [0,1]\}$ be a fractional Brownian motion of Hurst index $\alpha$. For a
deterministic set $D\subset [0,1]$ consider the following statements:
\begin{enumerate}[(A)]
\item \label{H} $\P(\dim_H B(A)=(1/\alpha) \dim_H A \textrm{ for all } A\subset D)=1$,
\item \label{P} $\P(\dim_P B(A)=(1/\alpha) \dim_P A \textrm{ for all } A\subset D)=1$,
\item \label{HP} $\P(\dim_P B(A)\geq (1/\alpha) \dim_H A \textrm{ for all } A\subset D)=1$.
\end{enumerate}
We introduce a new concept of dimension, the modified Assouad dimension, denoted by $\dim_{MA}$.
We prove that $\dim_{MA} D\leq \alpha$ implies \eqref{H}, which enables us to reprove a restriction theorem of Angel, Balka, M\'ath\'e, and Peres.
We show that if $D$ is self-similar then \eqref{H} is equivalent to $\dim_{MA} D\leq \alpha$.
Furthermore, if $D$ is a set defined by digit restrictions then \eqref{H} holds iff
$\dim_{MA} D\leq \alpha$ or $\dim_H D=0$. The characterization of \eqref{H} remains open in general.
We prove that $\dim_{MA} D\leq \alpha$ implies \eqref{P} and they are equivalent provided that
$D$ is analytic. We show that \eqref{HP} is equivalent to $\dim_H D\leq \alpha$. This implies that if $\dim_H D\leq \alpha$ and
$\Gamma_D=\{E\subset B(D): \dim_H E=\dim_P E\}$, then
\[ \P(\dim_H (B^{-1}(E)\cap D)=\alpha \dim_H E \textrm{ for all } E\in \Gamma_D)=1.\]
In particular, all level sets of $B|_{D}$ have Hausdorff dimension zero almost surely.
\end{abstract}

\maketitle

\section{Introduction}

Let $\dim_H$, $\dim_P$, and $\dim_A$ respectively denote the Hausdorff, packing, and Assouad dimension,
see Section~\ref{s:prel} for the definitions. The first uniform dimension result was proved by Kaufman~\cite{Ka3}.
\begin{theorem*}[Kaufman's dimension doubling theorem]
Let $\{\mathbf{B}(t): t\in [0,1]\}$ be a planar Brownian motion. Then, almost surely, for all $A\subset [0,1]$ we have
\begin{equation*}  \dim_H \mathbf{B}(A)=2\dim_H A \quad \textrm{and} \quad \dim_P \mathbf{B}(A)=2\dim_P A.
\end{equation*}
\end{theorem*}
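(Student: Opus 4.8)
The plan is to prove the two equalities by establishing each of the four inequalities separately, and then to reduce the packing statement to the Hausdorff one. Throughout, the two ``easy'' inequalities $\dim_H \mathbf{B}(A)\le 2\dim_H A$ and $\dim_P \mathbf{B}(A)\le 2\dim_P A$ come from regularity of the path, while the two lower bounds require a quantitative non-degeneracy property of planar Brownian motion. The whole difficulty is concentrated in the uniform lower bound for Hausdorff dimension.

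For the upper bounds I would use that for every fixed $\gamma<1/2$ the path is almost surely globally $\gamma$-H\"older: there is a random $C$ with $|\mathbf{B}(t)-\mathbf{B}(s)|\le C|t-s|^{\gamma}$ for all $s,t\in[0,1]$. A $\gamma$-H\"older map sends a cover of $A$ by intervals of lengths $\ell_j$ to a cover of $\mathbf{B}(A)$ by sets of diameter $\le C\ell_j^{\gamma}$, whence $\mathcal{H}^{s/\gamma}(\mathbf{B}(A))\le C^{s/\gamma}\mathcal{H}^{s}(A)$ and $\dim_H \mathbf{B}(A)\le \gamma^{-1}\dim_H A$; the same estimate bounds upper box dimension and hence packing dimension. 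Intersecting the full-probability events over a sequence $\gamma\uparrow 1/2$ gives both upper bounds simultaneously for all $A$, since H\"older continuity is a property of the whole path and requires no choice of $A$.

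The core is the uniform lower bound $\dim_H \mathbf{B}(A)\ge 2\dim_H A$. The key probabilistic input is a Kaufman-type multiplicity estimate: almost surely there exist $C$ and $N$ so that for all $n\ge N$ and all $x\in\R^2$, the number of dyadic intervals $I$ of length $2^{-n}$ with $\mathbf{B}(I)\subset B(x,2^{-n/2}\sqrt{n})$ is at most $Cn$ (any bound polynomial in $n$ would suffice). Granting this, I would run a covering argument: fix $t>\dim_H \mathbf{B}(A)$ and $\eps>0$ and cover $\mathbf{B}(A)$ by balls $B(x_i,r_i)$ with $r_i=2^{-n_i/2}\sqrt{n_i}$ small and $\sum_i r_i^{t}<\eps$. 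By L\'evy's modulus of continuity the radius $2^{-n/2}\sqrt n$ is exactly the oscillation scale of a length-$2^{-n}$ interval, so each point of $A$ mapped into $B(x_i,r_i)$ lies in a length-$2^{-n_i}$ dyadic interval whose image sits in a slightly enlarged ball; the multiplicity estimate bounds the number of such intervals by $Cn_i$. This yields a cover of $A$ by dyadic intervals, and with the exponent $s=t/2+\eps$ the bookkeeping
\[
\sum_i C n_i\,(2^{-n_i})^{t/2+\eps}\le C'\sum_i r_i^{t}<C'\eps
\]
holds because the factor $n_i^{1-t/2}2^{-n_i\eps}$ is bounded; the slack $\eps$ absorbs all logarithmic and polynomial factors. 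Letting $\eps\downarrow 0$ and $t\downarrow \dim_H \mathbf{B}(A)$ gives $\dim_H A\le \tfrac12\dim_H \mathbf{B}(A)$ for every $A$ at once, the uniformity being inherited from the multiplicity estimate, which holds for all balls simultaneously.

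The hard part is the multiplicity estimate itself. I expect to prove it by a moment computation for a single ball --- the expected number of qualifying intervals is polynomial in $n$, being essentially $\sum_k \P(\mathbf{B}(k2^{-n})\in B(x,r_n))$ with the relevant summands of order $r_n^2=2^{-n}n$ --- followed by a union bound over a net of $\sim 2^{Cn}$ balls covering the range and an application of Borel--Cantelli in $n$. The delicate point is the dependence between the events for different intervals, which I would handle via the Markov property to extract exponential-in-$n$ tails strong enough to survive the union bound. Finally I would deduce the packing statement from the Hausdorff one: using $\dim_P E=\inf\{\sup_i\dim_H E_i: E\subset\bigcup_i E_i\}$, any decomposition $\mathbf{B}(A)\subset\bigcup_i F_i$ pulls back to $A\subset\bigcup_i A_i$ with $A_i=A\cap \mathbf{B}^{-1}(F_i)$, and the uniform Hausdorff equality gives $\dim_H A_i=\tfrac12\dim_H \mathbf{B}(A_i)\le\tfrac12\dim_H F_i$, so that $\dim_P A\le\tfrac12\dim_P \mathbf{B}(A)$; combined with the H\"older upper bound this yields $\dim_P \mathbf{B}(A)=2\dim_P A$ for all $A$.
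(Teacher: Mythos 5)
The paper does not prove this theorem --- it is quoted from Kaufman \cite{Ka3}, with the packing half attributed to the argument in \cite[Section~9.4]{MPe} --- so I am comparing your proposal against the standard argument. Your treatment of the upper bounds via H\"older continuity and of the Hausdorff lower bound is essentially the classical Kaufman proof and is sound in outline: the uniform multiplicity estimate (a.s.\ every ball of radius $2^{-n/2}\sqrt{n}$ pulls back to at most polynomially many order-$n$ dyadic intervals), proved by a first-moment bound plus a Markov-property iteration to get tails strong enough for a union bound over $2^{Cn}$ balls and Borel--Cantelli, is exactly the right engine, and the bookkeeping $\sum_i Cn_i(2^{-n_i})^{t/2+\eps}\lesssim\sum_i r_i^t$ correctly absorbs the polynomial factors into the slack $\eps$. (Minor point: the first moment is of order $n^2$ rather than $n$, but as you say any polynomial suffices.) This is also structurally the same machinery the paper builds in Section~\ref{s:heart} for the fractional analogue.

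The genuine gap is in your final step. The identity $\dim_P E=\inf\{\sup_i\dim_H E_i: E\subset\bigcup_i E_i\}$ is false: by countable stability of Hausdorff dimension that infimum is attained at the trivial cover $E_1=E$ and equals $\dim_H E$, not $\dim_P E$. The correct characterization, which the paper records in Section~\ref{s:prel}, replaces $\dim_H$ by the upper Minkowski dimension $\overline{\dim}_M$, and your chain of inequalities only controls $\dim_H A_i$, not $\overline{\dim}_M A_i$, so it does not bound $\dim_P A$. Indeed the packing statement cannot be deduced from the uniform Hausdorff statement by any soft argument: the paper's Theorems~\ref{t:digit} and \ref{t:p} exhibit sets $D_S$ with $\dim_H D_S=0$ for which the uniform Hausdorff equality \eqref{iH} holds while the packing equality \eqref{iP} fails, so the implication you invoke is false in general. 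The repair is standard and cheap given what you already have: the same multiplicity lemma gives the box-counting bound $N_{2^{-n}}\bigl(A\cap\mathbf{B}^{-1}(E)\bigr)\leq Cn^2\,N_{2^{-n/2}}(E)$, hence $\overline{\dim}_M\bigl(A\cap\mathbf{B}^{-1}(E)\bigr)\leq\tfrac12\overline{\dim}_M E$, and applying this to each piece of a countable cover of $\mathbf{B}(A)$ together with the genuine characterization $\dim_P=\inf\sup_i\overline{\dim}_M$ yields $\dim_P A\leq\tfrac12\dim_P\mathbf{B}(A)$. This is precisely how the paper handles the packing half of Theorem~\ref{t:main}.
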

Note that the packing dimension result is not stated in \cite{Ka3}, but it follows easily from the proof, see also
\cite[Section~9.4]{MPe}. Here `uniform' means that the exceptional null probability event on which the theorem does not hold
is independent of $A$. Stronger uniform results for Haudorff and packing measures were obtained in \cite{PT}, for processes
with stationary, independent increments see \cite{HP}.
The theorem was generalized for higher dimensional fractional Brownian motion of appropriate parameters:
in case of Hausdorff dimension see Monrad and Pitt~\cite{MP}, while in case of packing dimension see Xiao~\cite{X2}.

\bigskip

Now we consider one-dimensional (fractional) Brownian motion.
McKean~\cite{Mc} proved the following theorem
for Brownian motion (case $\alpha=1/2$), and the general result for fractional Brownian motion was established by
Kahane~\cite[Chapter~18]{Kh}.

\begin{theorem*}[Kahane]
Let $0<\alpha<1$ and let $\{B(t): t>0\}$ be a fractional Brownian motion of Hurst index $\alpha$.
For a Borel set $A\subset [0,1]$, almost surely, we have
\[ \dim_H B(A)=\min\left\{1,(1/\alpha)\dim_H A \right\}.\]
\end{theorem*}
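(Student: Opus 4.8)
The plan is to prove the two inequalities separately. The upper bound $\dim_H B(A)\le \min\{1,(1/\alpha)\dim_H A\}$ in fact holds uniformly in $A$ and follows from the H\"older regularity of the sample paths, whereas the matching lower bound is obtained for the fixed Borel set $A$ via the potential-theoretic (energy) method, and it is there that the Borel hypothesis enters.

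For the upper bound, first note that $B(A)\subset\R$ gives $\dim_H B(A)\le 1$ for free. Recall that fractional Brownian motion of Hurst index $\alpha$ is, almost surely, $\gamma$-H\"older continuous on $[0,1]$ for every $\gamma<\alpha$; this follows from the Kolmogorov continuity theorem together with the scaling relation $\E|B(t)-B(s)|^2=|t-s|^{2\alpha}$. A $\gamma$-H\"older map multiplies Hausdorff dimension by at most $1/\gamma$: covering $A$ by sets of diameter $\delta$ produces a cover of $B(A)$ by sets of diameter at most $C\delta^\gamma$, so that $\dim_H B(A)\le (1/\gamma)\dim_H A$. Letting $\gamma\uparrow\alpha$ along a countable sequence then yields $\dim_H B(A)\le(1/\alpha)\dim_H A$ almost surely, simultaneously for every $A\subset[0,1]$.

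For the lower bound, fix $s<\dim_H A$ and $t<\min\{1,s/\alpha\}$. Since $A$ is Borel, Frostman's lemma supplies a Borel probability measure $\mu$ carried by $A$ with finite $s$-energy $I_s(\mu)=\iint|x-y|^{-s}\,d\mu(x)\,d\mu(y)<\infty$. Let $\nu=B_*\mu$ be the pushforward measure on $B(A)$; it suffices to show that its $t$-energy $I_t(\nu)$ is finite almost surely, as this forces $\dim_H B(A)\ge t$. I would estimate $\E\,I_t(\nu)$ by Fubini: because $B(x)-B(y)$ is centered Gaussian with variance $|x-y|^{2\alpha}$, one has $\E|B(x)-B(y)|^{-t}=c_t\,|x-y|^{-\alpha t}$, where $c_t=\E|Z|^{-t}$ for a standard normal $Z$, and $c_t<\infty$ \emph{precisely because} $t<1$. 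Hence $\E\,I_t(\nu)=c_t\,I_{\alpha t}(\mu)\le c_t\,I_s(\mu)<\infty$, using $\alpha t<s$ and $\diam A\le 1$, so $I_t(\nu)<\infty$ almost surely. Letting $t\uparrow\min\{1,s/\alpha\}$ and $s\uparrow\dim_H A$ along countable sequences finishes the lower bound.

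The crux of the argument is the negative-moment computation $\E|Z|^{-t}<\infty\iff t<1$: this is exactly what caps the image dimension at $1$ and, combined with the variance scaling $|x-y|^{2\alpha}$, converts finite $s$-energy of $\mu$ into finite $(\alpha t)$-energy in the bound for $\E\,I_t(\nu)$. The one genuine limitation is that this lower bound is proved only for a single fixed Borel set $A$, since the exceptional null event depends on $A$ through the Frostman measure $\mu$ tailored to it; upgrading the conclusion to hold simultaneously for all $A$ is precisely the uniform dimension problem that motivates this paper and demands substantially finer techniques.
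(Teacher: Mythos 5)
Your argument is correct and is precisely the classical proof (Hölder upper bound plus the Frostman/energy-method lower bound with the negative-moment computation $\E|Z|^{-t}<\infty$ for $t<1$); the paper does not reprove this result but cites Kahane's book, where essentially this potential-theoretic argument is given. Your closing remark correctly identifies why the lower bound is not uniform in $A$, which is exactly the point of departure for the paper.
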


The zero set $\iZ$ of $B$ shows that this is not a uniform dimension result,
since $\dim_H \iZ=1-\alpha$ almost surely, see e.g.\ \cite[Chapter~18]{Kh}.

\bigskip

Kahane's theorem uniformly holds for almost all translates of all Borel sets.
Kaufman~\cite{Ka} proved the following for Brownian motion, and Wu and Xiao \cite{WX}
extended it to fractional Brownian motion.

 \begin{theorem*}[Wu and Xiao]
Let $0<\alpha<1$ and let $\{B(t): t>0\}$ be a fractional Brownian motion of Hurst index $\alpha$.
Then, almost surely, for all Borel sets $A\subset [0,1]$ we have
\[ \dim_H B(A+t)=\min\{1,(1/\alpha)\dim_H A\} \quad \textrm {for almost all } t>0.\]
\end{theorem*}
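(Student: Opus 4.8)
The plan is to prove the two inequalities separately, with the upper bound holding uniformly over all $A$ and all $t$, and the lower bound carrying the real content. For the upper bound I would use that $B$ is almost surely locally H\"older continuous of every order $\gamma<\alpha$, so that on a single almost sure event every restriction $B|_{[0,T]}$ has a finite H\"older-$\gamma$ constant. A $\gamma$-H\"older map sends a set of Hausdorff dimension $d$ to a set of dimension at most $d/\gamma$, and the image lies in $\R$, hence has dimension at most $1$. Letting $\gamma\uparrow\alpha$ and taking a countable union over $T\in\N$ yields, on one event, $\dim_H B(A+t)\le\min\{1,(1/\alpha)\dim_H A\}$ for all $A\subset[0,1]$ and all $t>0$ simultaneously.

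For the lower bound I would use the energy (Frostman) method together with a Fubini argument in $t$. Fix a rational $\beta\in(0,1)$ and set $\gamma=\alpha\beta$. By inner regularity of Hausdorff dimension it suffices to treat compact $F\subset[0,1]$ with $\dim_H F>\gamma$, since $B(F+t)\subset B(A+t)$ whenever $F\subset A$; choosing a rational $\gamma'\in(\gamma,\dim_H F)$ and a Frostman measure $\mu$ on $F$ with $I_{\gamma'}(\mu)=\iint|s-u|^{-\gamma'}\,d\mu(s)\,d\mu(u)<\infty$, it is enough to show that for almost every $t$ the pushforward of $\mu$ under $s\mapsto B(s+t)$ has finite $\beta$-energy. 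The basic input is a Gaussian computation: since $B$ has stationary increments, $B(s+t)-B(u+t)$ is centred Gaussian with variance $|s-u|^{2\alpha}$, so $\E\,|B(s+t)-B(u+t)|^{-\beta}=c_\beta|s-u|^{-\gamma}$, finite precisely because $\beta<1$. Integrating over $t\in[0,T]$ and applying Tonelli shows, for a single fixed $F$, that $\int_0^T\iint|B(s+t)-B(u+t)|^{-\beta}\,d\mu(s)\,d\mu(u)\,dt$ has expectation $c_\beta T\,I_{\gamma}(\mu)<\infty$, hence is almost surely finite; thus for almost every $t$ the $\beta$-energy of the pushforward is finite and $\dim_H B(F+t)\ge\beta$. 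Taking the supremum over rational $\beta$ and a countable union over $T\in\N$ recovers the full lower bound for one fixed $F$.

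The hard part is uniformity in $A$: the preceding argument produces, for each $A$, an exceptional $\P$-null event that a priori depends on $A$, and there are uncountably many sets to handle. One cannot strengthen the conclusion to a $t$-set that is good for all $A$ at once, because for any fixed $t$ the level sets of the single path $s\mapsto B(s+t)$ already violate dimension doubling. The way I would enforce uniformity is to prove a measure-independent domination of the time-averaged kernel: on one almost sure event, for every rational $\gamma'\in(\gamma,1)$ there is a finite random constant $C=C(\gamma',\omega)$ with
\[
K(s,u):=\int_0^T \bigl|B(s+t)-B(u+t)\bigr|^{-\beta}\,dt \;\le\; C\,|s-u|^{-\gamma'}\qquad\text{for all } s\neq u .
\]
Granting this, $\iint K\,d\mu\,d\mu\le C\,I_{\gamma'}(\mu)<\infty$ for every Frostman measure $\mu$ as above, so the $t$-averaged $\beta$-energy is finite for all $F$ on a \emph{single} event, and the per-$F$ null set of bad $t$ is then recovered by Fubini. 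Establishing this kernel bound is the crux: for a fixed gap $|s-u|$ the process $t\mapsto B(s+t)-B(u+t)$ is stationary Gaussian of variance $|s-u|^{2\alpha}$, and one must control the fluctuations of $K(s,u)$ about its mean $c_\beta T|s-u|^{-\gamma}$ simultaneously over the continuum of pairs $(s,u)$. I would do this by a dyadic discretization in $(s,u)$ combined with Gaussian concentration and a chaining/Borel--Cantelli argument, absorbing the loss into the arbitrarily small exponent gap $\gamma'-\gamma$. This quantitative, measure-uniform estimate is exactly the step that upgrades Kahane's single-set theorem to the statement holding for all $A$ simultaneously, and it is where I expect essentially all the difficulty to lie.
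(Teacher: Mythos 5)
This theorem is not proved in the paper: it is quoted as background, with the proof deferred to Kaufman \cite{Ka} for $\alpha=1/2$ and to Wu and Xiao \cite{WX} in general. So there is no in-paper argument to compare against; I can only judge your proposal on its own terms and against the cited literature. Your overall architecture is in fact the classical one: the H\"older upper bound is routine, the lower bound via Frostman measures and $\beta$-energies of the pushforward is standard, and the way you propose to get uniformity over all Borel $A$ --- a pathwise, measure-independent bound $\int_0^T|B(s+t)-B(u+t)|^{-\beta}\,dt\leq C|s-u|^{-\gamma'}$ valid simultaneously for all $s\neq u$, followed by Fubini in $t$ for each individual Frostman measure --- is precisely the mechanism in Kaufman's and Wu--Xiao's proofs. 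The reductions (compact $F\subset A$, rational $\beta$, countable union over $T$) are all handled correctly, and the Gaussian computation $\E|B(s+t)-B(u+t)|^{-\beta}=c_\beta|s-u|^{-\alpha\beta}$ is right.

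The genuine gap is that the uniform kernel estimate --- which you correctly identify as carrying essentially all the difficulty --- is only asserted, and the method you sketch for it (``dyadic discretization, Gaussian concentration, chaining'') is not adequate as stated. The functional $K(s,u)=\int_0^T|X_t|^{-\beta}\,dt$, with $X_t=B(s+t)-B(u+t)$, is an unbounded, non-Lipschitz functional of the path: its fluctuations are driven by the occupation measure of $X$ near $0$, so standard Gaussian concentration for Lipschitz functionals does not apply, and a naive union bound over the $\asymp h^{-2}$ dyadic pairs at gap $h$ fails because the summand $|X_t|^{-\beta}$ has only a polynomial tail ($\P(|g|^{-\beta}>x)\asymp x^{-1/\beta}$). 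What is actually needed is (a) high-moment or exponential-moment bounds on the occupation measure $|\{t\leq T:|X_t|\leq v\}|$, which for fractional Brownian motion require the strong local nondeterminism of Pitt (the same input as Lemma 3.4 of this paper, via \cite{P}) since increments over disjoint intervals are no longer independent when $\alpha\neq 1/2$, and (b) a separate treatment of the contribution from very small values of $|X_t|$, e.g.\ via crossing counts or the modulus of continuity, before Borel--Cantelli can be applied over scales. Until that estimate is proved, the argument establishes only Kahane's single-set theorem (fixed $A$, exceptional null event depending on $A$), not the uniform statement.
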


Now we turn to the case of packing dimension.
Somewhat surprisingly, the formula $\dim_P B(A)=\min\left\{1,(1/\alpha)\dim_P A \right\}$ does not hold.
In order to obtain a general formula for $\dim_P B(A)$ we need another notion of dimension.
Let $\Dim_{\alpha}$ denote the $\alpha$-dimensional packing dimension profile introduced by
Falconer and Howroyd in \cite{FaH0}.
The following theorem is due to Xiao~\cite{X1}.

\begin{theorem*}[Xiao]
Let $0<\alpha<1$ and let $\{B(t): t>0\}$ be a fractional Brownian motion of Hurst index $\alpha$.
For a Borel set $A\subset [0,1]$, almost surely, we have
\[\dim_P B(A)=(1/\alpha)\Dim_{\alpha} A.\]
\end{theorem*}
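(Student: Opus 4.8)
The plan is to prove the two inequalities $\dim_P B(A) \le (1/\alpha)\Dim_\alpha A$ and $\dim_P B(A)\ge (1/\alpha)\Dim_\alpha A$ separately, in each case exploiting one of the two dual descriptions of the Falconer--Howroyd profile together with the self-affinity and Gaussian structure of $B$. The single probabilistic input driving everything is the two-sided estimate
$$\P\big(|B(t)-B(s)|\le \rho\big)\asymp \min\Big\{1,\frac{\rho}{|t-s|^\alpha}\Big\} =\min\Big\{1,\Big(\frac{\rho^{1/\alpha}}{|t-s|}\Big)^\alpha\Big\},$$
which follows from stationarity of increments, the scaling $B(c\,\cdot)\stackrel{d}{=}c^\alpha B(\cdot)$, and the Gaussian density of $B(t)-B(s)$. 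The crucial point is that after the substitution $r=\rho^{1/\alpha}$ the right-hand side is exactly the kernel $\min\{1,(r/|t-s|)^\alpha\}$ defining the $\alpha$-profile, so the exponent $1/\alpha$ in the theorem is built into this identity. Since the zero set already shows the result cannot be uniform, $A$ is fixed throughout and the exceptional null event is allowed to depend on $A$.

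For the lower bound I would use the potential-theoretic (measure) form of the profile: if $\gamma<\Dim_\alpha A$ then there is $\mu\in\iM(A)$ with $\mu(A)>0$ for which $\liminf_{r\to 0} r^{-\gamma}\int\min\{1,(r/|x-y|)^\alpha\}\,d\mu(y)<\infty$ for $\mu$-a.e.\ $x$. Push $\mu$ forward to $\nu=B_*\mu$, a finite measure on $B(A)$, and estimate its lower density. Taking expectations and using the kernel identity gives
$$\E\big[\nu(B(B(x),\rho))\big]=\int \P\big(|B(y)-B(x)|\le\rho\big)\,d\mu(y)\asymp \int\min\Big\{1,\Big(\frac{\rho^{1/\alpha}}{|x-y|}\Big)^\alpha\Big\}\,d\mu(y),$$
which is exactly the profile potential evaluated at radius $\rho^{1/\alpha}$. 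Choosing the scales $\rho_k=r_k^\alpha$ along which the potential is $\lesssim r_k^\gamma$ yields $\E[\nu(B(B(x),\rho_k))]\lesssim \rho_k^{\gamma/\alpha}$, so the lower $(\gamma/\alpha)$-density of $\nu$ is finite in the mean. The Taylor--Tricot lower-density criterion for packing dimension (finite lower density on a set of positive $\nu$-measure forces $\dim_P\ge \gamma/\alpha$) then gives the bound, and letting $\gamma\uparrow\Dim_\alpha A$ finishes.

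For the upper bound I would instead use the box-counting form of the profile: decompose $A=\bigcup_i A_i$ so that $\overline{\dim}_B^\alpha A_i\le \Dim_\alpha A+\eps$, and use countable stability $\dim_P B(A)=\sup_i \dim_P B(A_i)$. For a single piece $E=A_i$ one bounds the expected weighted covering number of $B(E)$ at scale $\rho$: grouping points of $E$ according to a near-optimal $\rho^{1/\alpha}$-scale cover and controlling image diameters through the modulus of continuity, the overlap weights again assemble into the kernel $\min\{1,(\rho^{1/\alpha}/|x-y|)^\alpha\}$, producing $\E[N_\rho(B(E))]\lesssim \rho^{-(1/\alpha)\overline{\dim}_B^\alpha E-\eps}$. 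Markov's inequality along a geometric sequence $\rho_k=2^{-k}$ together with Borel--Cantelli then bounds $\overline{\dim}_B B(E)$, hence $\dim_P B(E)$, by $(1/\alpha)\overline{\dim}_B^\alpha E$ almost surely; summing over $i$ and letting $\eps\downarrow0$ gives $\dim_P B(A)\le (1/\alpha)\Dim_\alpha A$.

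The main obstacle is the lower bound, specifically upgrading the first-moment estimate to an almost-sure statement about the lower density of the random measure $\nu$ simultaneously for $\nu$-a.e.\ point: the quantifiers ``almost surely in $\omega$'', ``for $\nu$-a.e.\ $y$'', and the $\liminf$ over $\rho$ must be interchanged carefully. I expect this to require integrating the density estimate against $\mu$, invoking Fubini, and running a Borel--Cantelli argument only along the discrete scales $\rho_k$, with the strong local nondeterminism of fractional Brownian motion supplying the conditional variance control needed to turn the mean bound into concentration. Verifying that the two dual profile descriptions used in the two halves agree (the Falconer--Howroyd duality) is a cited input rather than part of the argument.
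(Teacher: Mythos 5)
This theorem is quoted in the paper as background, with a citation to Xiao [X1]; the paper contains no proof of it, so there is nothing internal to compare your argument against. Judged on its own terms, your outline follows what is essentially the standard Falconer--Howroyd/Xiao route: the identity $\P(|B(t)-B(s)|\le\rho)\asymp\min\{1,\rho|t-s|^{-\alpha}\}$ turning the profile kernel at scale $\rho^{1/\alpha}$ into an image-side ball probability, the push-forward measure and lower-density criterion for the lower bound, and the expected covering number plus Borel--Cantelli along dyadic scales plus countable stability for the upper bound. The upper half is sound as sketched (there Markov gives summable tails $2^{-k\eps}$, so Borel--Cantelli applies), granting the Falconer--Howroyd duality between the box-counting and potential-theoretic forms of $\Dim_\alpha$, which you correctly treat as a cited input.

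The one place where your plan, executed as written, would stall is precisely the step you flag as the main obstacle, and your proposed fix is the wrong tool. To get $\liminf_{\rho\to0}\nu(B(B(x),\rho))/\rho^{\gamma/\alpha}<\infty$ almost surely from the first-moment bound, Borel--Cantelli requires summable tail probabilities, but Markov's inequality alone gives $\P\bigl(\nu(B(B(x),\rho_k))>M\rho_k^{\gamma/\alpha}\bigr)\le 1/M$, which is not summable in $k$; and upgrading this to genuine concentration of the random variable $\nu(B(B(x),\rho))$ via strong local nondeterminism is far from routine. The point is that no concentration is needed: since the density criterion only asks for a \emph{liminf} to be finite, Fatou's lemma in the favorable direction,
$$\E\Bigl[\liminf_{k\to\infty}\frac{\nu(B(B(x),\rho_k))}{\rho_k^{\gamma/\alpha}}\Bigr]\le\liminf_{k\to\infty}\frac{\E[\nu(B(B(x),\rho_k))]}{\rho_k^{\gamma/\alpha}}\le C\liminf_{r\to0}\frac{1}{r^{\gamma}}\int\min\Bigl\{1,\Bigl(\frac{r}{|x-y|}\Bigr)^{\alpha}\Bigr\}\,d\mu(y)=0$$
for $\mu$-a.e.\ $x$ when $\gamma<\Dim_\alpha\mu$, immediately gives that the liminf vanishes almost surely for each fixed $x$; a single application of Fubini then yields that almost surely the condition holds for $\mu$-a.e.\ $x$, hence for $\nu$-a.e.\ point of $B(A)$. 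This is exactly how Xiao and Falconer--Howroyd close the argument, and with that substitution your proof goes through.
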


The zero set of a linear Brownian motion $B$ witnesses that
the analogue of Kaufman's dimension doubling theorem does not hold in dimension one. Let $\dim$ denote Hausdorff dimension or packing dimension.
If instead of $[0,1]$ we take a small enough set $D\subset [0,1]$, then $\dim B(A)=2\dim A$ may hold for all
$A\subset D$.
For example let $W$ be a linear Brownian which is independent of $B$ and let
$D=\iZ$ be the zero set of $W$. Then Kaufman's dimension doubling theorem implies that, almost surely,
for all $A\subset D$ we have
\[\dim B(A)=\dim (B,W)(A)=2\dim A.\]
Which property of $D$ ensures the above formula? The main goal of the paper is to fully
answer this question in case of packing dimension, and partially answer it in case of Hausdorff dimension. 

More generally, let $0<\alpha<1$ and let $\{B(t): t\in [0,1]\}$ be a fractional Brownian motion of Hurst index $\alpha$.
Let $D\subset [0,1]$ be a given deterministic set and consider the following statements:
\begin{enumerate}[(A)]
\item \label{iH} $\P(\dim_H B(A)=(1/\alpha) \dim_H A \textrm{ for all } A\subset D)=1$,
\item \label{iP} $\P(\dim_P B(A)=(1/\alpha) \dim_P A \textrm{ for all } A\subset D)=1$,
\item \label{iHP} $\P(\dim_P B(A)\geq (1/\alpha) \dim_H A \textrm{ for all } A\subset D)=1$.
\end{enumerate}

Our main problems can be stated as follows.

\begin{problem} \label{p:H} Characterize the sets $D$ for which \eqref{iH} holds.
\end{problem}

\begin{problem} \label{p:P} Characterize the sets $D$ for which \eqref{iP} holds.
\end{problem}

\begin{problem} \label{p:HP} Characterize the sets $D$ for which \eqref{iHP} holds.

\end{problem}
Note that $B$ is almost surely $\gamma$-H\"older continuous for all $\gamma<\alpha$, so almost surely,
for all $A\subset [0,1]$ we have
\[\dim_H B(A)\leq (1/\alpha) \dim_H A \quad \textrm{and} \quad  \dim_P B(A)\leq (1/\alpha) \dim_P A.\]
Therefore Problems~\ref{p:H} and \ref{p:P} are about to determine the sets $D$ for which the images $B(A)$ are of maximal dimension for
all $A\subset D$.

Clearly $\dim_H D\leq \alpha$ is necessary for \eqref{iH}, and $\dim_P D\leq \alpha$ is necessary for \eqref{iP}.
Somewhat surprisingly, the converse implications do not hold.
In order to fully solve Problem~\ref{p:P} for analytic sets $D$
and partially solve Problem~\ref{p:H} we introduce a new notion of dimension, the modified Assouad dimension.

\begin{definition} Let $X$ be a totally bounded metric space. For $x\in X$ and $r>0$ let
$B(x,r)$ denote the closed ball of radius $r$ around $x$. For $A\subset X$ let $N_r(A)$
denote the smallest number of closed balls of diameter $r$ required to cover $A$. For all $0<\eps<1$ define the \emph{$\eps$-Assouad dimension} as
\begin{align*} \dim^{\eps}_A X=\inf \{&\gamma: \exists \ C<\infty \textrm{ such that, for all } 0<r\leq r^{1-\eps}\leq R, \\
&\textrm{we have }  \sup \{N_r(B(x,R)): x\in X\}\leq C(R/r)^{\gamma}\}.
\end{align*}
Then the \emph{$\eps$-modified Assouad dimension} is defined by
\[\dim^{\eps}_{MA} X=\inf \left\{\sup_i \dim^{\eps}_{A} X_i: X=\bigcup_{i=1}^{\infty} X_i\right\}.\]
Finally, we define the \emph{modified Assouad dimension} as
\[\dim_{MA} X=\sup_{0<\eps<1} \dim^{\eps}_{MA} X=\lim_{\eps \to 0+} \dim^{\eps}_{MA} X.\]
\end{definition}

\begin{remark} Independently, F.~L\"u and L.~Xi~\cite{LX} introduced a similar concept,
the \emph{quasi-Assouad dimension} defined as
\[\dim_{qA} X=\lim_{\eps \to 0+} \dim^{\eps}_{A} X.\]
Their motivation came from studying quasi-Lipschitz mappings. Clearly we have $\dim_{MA} X\leq \dim_{qA} X$, and for sufficiently homogeneous spaces
(e.g.\ self-similar sets and sets defined by digit restrictions) the two notions coincide.
\end{remark}

\begin{fact} \label{f:3d} For every totally bounded metric space $X$ we have
\[\dim_H X\leq \dim_P X\leq \dim_{MA} X\leq \dim_A X.\]
\end{fact}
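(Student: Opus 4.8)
The plan is to prove the chain from the two ends inward, with the middle inequality as the main point. The outer inequality $\dim_H X \le \dim_P X$ is classical and holds in every metric space, so nothing new is needed there. For the rightmost inequality I would first observe that $\dim^{\eps}_A Y \le \dim_A Y$ for every $Y \subseteq X$ and every $\eps \in (0,1)$: the defining covering estimate for $\dim^{\eps}_A$ is imposed only on the scale pairs with $r \le r^{1-\eps} \le R$, which form a subfamily of the full range $0 < r \le R$ governing $\dim_A$, so any admissible exponent $\gamma$ for $\dim_A$ is admissible for $\dim^{\eps}_A$. Testing the definition of $\dim^{\eps}_{MA} X$ on the trivial one-piece cover $X = X$ then gives $\dim^{\eps}_{MA} X \le \dim^{\eps}_A X \le \dim_A X$ for all $\eps$, and taking the supremum over $\eps$ yields $\dim_{MA} X \le \dim_A X$.

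The heart of the matter is $\dim_P X \le \dim_{MA} X$. Here I would use Tricot's characterization of packing dimension as the modified upper box dimension,
$$\dim_P X = \inf\Big\{ \sup_i \overline{\dim}_B X_i : X = \bigcup_{i=1}^{\infty} X_i \Big\},$$
which mirrors exactly the countable stabilization in the definition of $\dim^{\eps}_{MA}$. Thus it suffices to prove the scale-localized comparison
$$\overline{\dim}_B Y \le \dim^{\eps}_A Y \qquad \text{for every totally bounded } Y \text{ and every } \eps \in (0,1).$$
Granting this, fix $\eps$ and choose a cover $X = \bigcup_i X_i$ with $\sup_i \dim^{\eps}_A X_i \le \dim^{\eps}_{MA} X + \delta$; the comparison gives $\sup_i \overline{\dim}_B X_i \le \dim^{\eps}_{MA} X + \delta$, so $\dim_P X \le \dim^{\eps}_{MA} X + \delta$. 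Letting $\delta \to 0$ gives $\dim_P X \le \dim^{\eps}_{MA} X$ for every $\eps$, whence $\dim_P X \le \sup_{0<\eps<1} \dim^{\eps}_{MA} X = \dim_{MA} X$.

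To prove the comparison, set $R = \diam Y$ and fix $x \in Y$, so that $Y \subseteq B(x,R)$ and hence $N_r(Y) \le N_r(B(x,R))$. For any $\gamma > \dim^{\eps}_A Y$ there is a constant $C$ with $N_r(B(x,R)) \le C(R/r)^{\gamma}$ whenever $r \le r^{1-\eps} \le R$. Since $r^{1-\eps} \le R$ holds for all sufficiently small $r$ (with a threshold depending only on $R$ and $\eps$), we obtain $N_r(Y) \le C(R/r)^{\gamma}$ for all such $r$, and therefore
$$\overline{\dim}_B Y = \limsup_{r \to 0+} \frac{\log N_r(Y)}{\log(1/r)} \le \gamma.$$
Letting $\gamma \downarrow \dim^{\eps}_A Y$ proves the comparison.

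The only step demanding care is this last one: one must check that the admissible scale range $r \le r^{1-\eps} \le R$ does not obstruct letting $r \to 0$ while keeping $R$ fixed at $\diam Y$. This is precisely where the constraint $R \ge r^{1-\eps}$ (rather than $R \ge r$) is harmless, since for fixed $R > 0$ and $\eps \in (0,1)$ one has $r^{1-\eps} \le R$ for all small $r$; the degenerate cases $\diam Y = 0$ and $Y = \emptyset$ are trivial. I expect no further obstacle, because the countable stabilizations defining $\dim_P$ and $\dim^{\eps}_{MA}$ are identical in form, so the pointwise comparison $\overline{\dim}_B Y \le \dim^{\eps}_A Y$ passes through them verbatim.
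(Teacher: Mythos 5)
Your proof is correct and follows essentially the same route as the paper: the classical inequality $\dim_H X\leq\dim_P X$, the pointwise comparison $\overline{\dim}_M Y\leq\dim^{\eps}_{A}Y\leq\dim_A Y$, and the observation that the identical countable stabilizations defining $\dim_P$ and $\dim^{\eps}_{MA}$ let this comparison pass through. The paper states this in one line; your version just supplies the scale-range check ($r^{1-\eps}\leq\diam Y$ for small $r$) that the paper leaves implicit.
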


We prove a sufficient condition for \eqref{iH} and \eqref{iP}.

\begin{theorem} \label{t:main} Let $0<\alpha<1$ and let $\{B(t): t\in [0,1]\}$ be a fractional Brownian motion of Hurst index $\alpha$. Let
$D\subset [0,1]$ be such that $\dim_{MA} D\leq \alpha$. Then
\begin{align*} &\P(\dim_H B(A)=(1/\alpha)\dim_H A  \textrm{ for all } A\subset D)=1, \\
&\P(\dim_P B(A)=(1/\alpha)\dim_P A  \textrm{ for all } A\subset D)=1.
\end{align*}
\end{theorem}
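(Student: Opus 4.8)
The plan is to reduce everything to a single uniform covering estimate and then feed it into standard potential-theoretic and box-counting arguments. Since $B$ is almost surely $\gamma$-H\"older for every $\gamma<\alpha$, the upper bounds $\dim_H B(A)\le(1/\alpha)\dim_H A$ and $\dim_P B(A)\le(1/\alpha)\dim_P A$ hold simultaneously for all $A\subset[0,1]$, so only the matching lower bounds remain. First I would exploit the hypothesis: $\dim_{MA}D\le\alpha$ means $\dim^{\eps}_{MA}D\le\alpha$ for every $\eps$, so for each $\delta>0$ I may choose $\eps=\eps(\delta)$ as small as I like and write $D=\bigcup_i D_i$ with $\dim^{\eps}_A D_i\le\alpha+\delta$ for all $i$. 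Because both $\dim_H$ and $\dim_P$ are countably stable and distribute over the decomposition $A=\bigcup_i(A\cap D_i)$ (and likewise $B(A)=\bigcup_i B(A\cap D_i)$), it suffices to prove, for a fixed piece $D'$ with $\dim^{\eps}_A D'\le\alpha+\delta$, that almost surely $\dim B(A')\ge(1/\alpha)(\dim A'-C\delta)$ for every $A'\subset D'$; intersecting the resulting full-measure events over $i$ and over a sequence $\delta_m\to0$ then yields the theorem.

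The heart of the matter is the following uniform \emph{thinness of preimages} within $D'$. Writing $\Delta=r^{1/\alpha}$, I claim that almost surely there is $C$ such that for all small $r$ and all $z\in\R$ the set $\{t\in D':|B(t)-z|\le r\}$ is covered by at most $C\log(1/r)\,r^{-\delta/\alpha}$ intervals of length $\Delta$. To prove this I would fix the dyadic time-scale $\Delta$, let $M(z,r)$ count the $\Delta$-intervals meeting $D'$ whose left endpoint $t_I$ satisfies $|B(t_I)-z|\le Cr$ (so that, up to the modulus of continuity, $B(I)$ meets the $r$-ball around $z$), and estimate the moments $\E[M(z,r)^p]$. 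Expanding, $\E[M(z,r)^p]$ is a sum over ordered $p$-tuples of intervals of the probabilities that all the values $B(t_{I_j})$ lie in a common window of length $\approx r$; conditioning in time order and invoking the strong local nondeterminism of fractional Brownian motion, each increment over a gap $g\ge\Delta$ contributes a conditional-density factor $\lesssim r/g^{\alpha}$. This turns the moment into an energy-type sum that factorizes geometrically along the chain, and each per-step sum $\sum_{I'}r\,|t_I-t_{I'}|^{-\alpha}$ is controlled by decomposing over dyadic gaps and inserting the $\eps$-Assouad bound, which caps the number of $\Delta$-intervals of $D'$ within a gap $g$ by $C(g/\Delta)^{\alpha+\delta}$ once $\Delta\le g^{1-\eps}$. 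The outcome is $\E[M(z,r)^p]\le p!\,C^p\,r^{-p\delta/\alpha}$, a sub-exponential tail, which after a union bound over an $r$-net of values $z$ and over $r=2^{-k}$ and Borel--Cantelli gives the claim.

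With the covering lemma in hand the two dimension bounds are routine and, crucially, \emph{deterministic in $A'$}, since the lemma is uniform over all subsets of $D'$. For packing dimension I would use the box-counting consequence $N_\Delta(A'')\le N_r(B(A''))\cdot\max_z M(z,r)$, obtained by covering $B(A'')$ by $r$-balls and pulling each back through the lemma; this yields $\overline{\dim}_B B(A'')\ge(1/\alpha)(\overline{\dim}_B A''-C\delta)$ for every $A''\subset D'$, and applying it to $A_j''=B^{-1}(E_j)\cap A'$ together with the decomposition characterization $\dim_P E=\inf\{\sup_j\overline{\dim}_B E_j\}$ upgrades it to $\dim_P B(A')\ge(1/\alpha)(\dim_P A'-C\delta)$. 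For Hausdorff dimension I would instead take, for $t<\dim_H A'$, a Frostman measure $\mu$ on $A'$ with $\mu(I)\le|I|^{t}$, push it forward to $\nu=B_*\mu$, and bound $\nu(\{|w-z|\le r\})=\mu\{t:|B(t)-z|\le r\}\le C\log(1/r)\,r^{-\delta/\alpha}\,\Delta^{t}=C\log(1/r)\,r^{(t-\delta)/\alpha}$ using the covering lemma; the mass distribution principle then gives $\dim_H B(A')\ge(t-\delta)/\alpha$, and letting $t\uparrow\dim_H A'$ finishes.

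I expect the main obstacle to be the moment estimate and its uniformity: making the local-nondeterminism chain bound and the geometric factorization rigorous (including the behaviour of $B(t_I)$ for $t_I$ near $0$, where the Gaussian density degenerates), and especially choosing $\eps$ small relative to $\delta$ so that the poorly separated gaps $\Delta\le g\le\Delta^{1-\eps}$---on which the $\eps$-Assouad bound is unavailable---contribute only a factor $r^{-O(\eps)}$ that is absorbed into $r^{-C\delta/\alpha}$. This last point is precisely why the modified Assouad dimension, rather than the ordinary Assouad dimension, is the correct hypothesis.
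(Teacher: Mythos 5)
Your strategy is essentially the paper's: the same reduction via $\dim^{\eps}_{MA}D\le\alpha$ to countably many pieces of small $\eps$-Assouad dimension followed by countable stability, the same key uniform estimate (that the preimage in a piece $D'$ of any value window of length $r$ is covered by $r^{-O(\delta)/\alpha}$ time intervals of length $r^{1/\alpha}$ --- this is precisely the paper's event $\Gamma(\iU_n,3\eps)$ from Theorem~3.3), the same reliance on Pitt's strong local nondeterminism to control conditional densities, and the same box-counting deduction for packing dimension. The only methodological difference in the probabilistic core is that you obtain the tail bound by Kaufman-style $p$-th moment chaining, whereas the paper iterates a first-moment bound along stopping times (Lemmas~3.4--3.5); both yield the exponential tails needed for the union bound and Borel--Cantelli, so this is a matter of taste. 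Your accounting of the poorly separated scales $\Delta\le g\le\Delta^{1-\eps}$, absorbed by taking $\eps$ small relative to $\delta$, matches the role the $(\alpha+\eps,\eps)$-balanced condition plays in the paper.

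There is, however, one genuine gap: the Hausdorff-dimension deduction. For $t<\dim_H A'$ you invoke a Frostman measure $\mu$ on $A'$ with $\mu(I)\le|I|^{t}$. Frostman's lemma provides such a measure only for analytic (e.g.\ compact or Borel) sets, while the theorem quantifies over \emph{all} $A\subset D$ --- that is the entire point of a uniform dimension result. For a general subset no such measure need exist: every finite Borel measure is carried by a $\sigma$-compact set, and there are sets of positive Hausdorff dimension all of whose compact subsets are countable; Hausdorff dimension of arbitrary sets is not inner regular by compact subsets, so you cannot reduce to the analytic case. The repair uses only your covering lemma and is exactly what the paper does: run the argument on the image side. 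Given $s>\dim_H B(A')$, choose a cover of $B(A')$ by value intervals $J_k$ of dyadic lengths $r_k=2^{-\alpha n_k}$ with $\sum_k (\diam J_k)^{s}$ small (coverings, unlike measures, require no regularity of the set being covered), pull each $J_k$ back through the covering lemma to at most $r_k^{-C\delta/\alpha}$ time intervals of length $r_k^{1/\alpha}$, and conclude $\mathcal{H}^{\alpha s+C\delta}_{\infty}(A')=0$, hence $\dim_H A'\le\alpha\dim_H B(A')+C\delta$. With that substitution your proof is complete.
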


Moreover, we answer Problem~\ref{p:P} for analytic sets $D$ as follows.

\begin{theorem} \label{t:p} Let $0<\alpha<1$ and let $\{B(t): t\in [0,1]\}$ be a fractional Brownian motion of Hurst index $\alpha$.
For an analytic set $D\subset [0,1]$ the following are equivalent:
\begin{enumerate}[(i)]
\item \label{equi1} $\P(\dim_P B(A)=(1/\alpha) \dim_P A \textrm{ for all } A\subset D)=1$,
\item \label{equi2} $\dim_{MA} D\leq \alpha$.
\end{enumerate}
\end{theorem}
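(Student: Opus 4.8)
The two implications will be handled separately, and the direction \eqref{equi2}$\Rightarrow$\eqref{equi1} is immediate: if $\dim_{MA} D\le\alpha$, then Theorem~\ref{t:main} gives exactly $\P(\dim_P B(A)=(1/\alpha)\dim_P A \text{ for all } A\subseteq D)=1$, which is \eqref{equi1}.

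For \eqref{equi1}$\Rightarrow$\eqref{equi2} I would argue by contraposition, so assume $\dim_{MA} D>\alpha$. It suffices to produce a single compact set $A\subseteq D$ with $\P(\dim_P B(A)<(1/\alpha)\dim_P A)=1$, since then the event in \eqref{equi1} is almost surely witnessed to be false by $A$. Fix $\beta$ and $\eps$ with $\alpha<\beta<\dim_{MA} D$ and $\dim^{\eps}_{MA} D>\beta$. The first, and I expect hardest, step is a regularization: using that $D$ is analytic together with the infimum-over-decompositions form of $\dim^{\eps}_{MA}$, I would pass to a nonempty compact $K\subseteq D$ that is locally $\eps$-Assouad thick, meaning $\dim^{\eps}_A(K\cap V)\ge\beta$ for every open set $V$ meeting $K$. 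This is the analogue for $\dim^{\eps}_A$ of the Joyce--Preiss theorem on compact subsets of prescribed packing dimension, and carrying it out for the $\eps$-Assouad dimension is the main obstacle. Such a $K$ has the property that inside every relatively open piece, and at arbitrarily small scales, there are well-separated scale pairs $(r,R)$ with $R\ge r^{1-\eps}$ and $N_r(B(x,R)\cap K)\gtrsim (R/r)^{\gamma}$ for any fixed $\gamma<\beta$.

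Next I would build a homogeneous Cantor-type subset $A\subseteq K$ generation by generation: at stage $k$, inside each surviving window I locate such a pair $(r_k,R_k)$ with $R_k=r_k^{1-\eps}$, retain about $(R_k/r_k)^{\gamma}$ of the $r_k$-separated points it supplies, and insert a branching-free gap before the next stage. Choosing the scales $\log(1/r_k)$ geometrically makes the set log-self-similar, so that $\dim_P A=\overline{\dim}_B A=s$ for a value $s$ that the gap lengths can tune; note that $s\le\dim_P D$ holds automatically, so no lower bound on $\dim_P D$ is needed (this is the delicate regime where $\dim_P D\le\alpha<\dim_{MA} D$). Arranging this homogeneity while being forced to use whatever scale pairs $K$ happens to supply is the technical heart of the construction, since I need $\dim_P A=\overline{\dim}_B A$ in order to compare image box dimension with the packing dimension of $A$.

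Finally I would bound the image. Almost surely $B$ satisfies the uniform modulus of continuity $|B(t)-B(s)|\le C|t-s|^{\alpha}\sqrt{\log(1/|t-s|)}$, so within each stage-$k$ window of length $R_k$ the retained points map into an interval of length $\lesssim R_k^{\alpha}\sqrt{\log(1/R_k)}$, and hence need only $\lesssim (R_k/r_k)^{\alpha}\sqrt{\log(1/R_k)}$ image intervals of length $r_k^{\alpha}$ --- far fewer than the $(R_k/r_k)^{\gamma}$ required in the domain, precisely because $\gamma>\alpha$. Propagating this capping through all stages and comparing with $\dim_P A=s$ yields, almost surely,
\[ \overline{\dim}_B B(A)\le \frac{s}{\alpha}-\frac{\eps(\gamma-\alpha)}{\alpha}<\frac1\alpha\dim_P A, \]
and since $\dim_P\le\overline{\dim}_B$ this forces $\dim_P B(A)<(1/\alpha)\dim_P A$ almost surely, refuting \eqref{equi1}. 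Equivalently, one may invoke Xiao's theorem $\dim_P B(A)=(1/\alpha)\Dim_{\alpha}A$ and replace the probabilistic step by the deterministic inequality $\Dim_{\alpha}A<\dim_P A$, proved by the same capping applied to the Falconer--Howroyd box-counting profile; I expect the two routes to require essentially the same fractal-geometric input.
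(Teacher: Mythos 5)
Your handling of \eqref{equi2}$\Rightarrow$\eqref{equi1} and your opening regularization for the converse match the paper: Theorem~\ref{t:reg} reduces to a compact set, and Lemma~\ref{l:homog}~\eqref{eq:homog2} (a short Baire-category argument, considerably easier than the Joyce--Preiss-type result you anticipate) yields a compact $K\subset D$ with $\dim^{\eps}_{A}(K\cap V)>\alpha+\eps$ for every open $V$ meeting $K$. The genuine gap is exactly at the step you flag as the technical heart: you cannot, in general, arrange the Cantor subset $A$ to be ``log-self-similar'' with a single geometric scale sequence $(r_k,R_k)$. Local $\eps$-Assouad thickness only guarantees that each surviving window admits \emph{some} scale pair $(r,R)$ with $R\geq r^{1-\eps}$ and rich branching, and these pairs are location-dependent: two windows of the same generation may only admit good pairs at wildly incomparable magnitudes, so no common $(r_k,R_k)$ can be imposed across a generation. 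Without this synchronization your identity $\dim_P A=\overline{\dim}_M A$ fails (packing dimension can drop strictly below upper Minkowski dimension for an inhomogeneous tree), and the chain $\dim_P B(A)\leq \overline{\dim}_M B(A)<\frac{1}{\alpha}\overline{\dim}_M A$ no longer yields $\dim_P B(A)<\frac{1}{\alpha}\dim_P A$. As a secondary point, even in the homogeneous toy model your displayed bound on $\overline{\dim}_M B(A)$ is only checked along the scales $r_k^{\alpha}$; the supremum over all covering scales is attained at a crossover scale inside $[R_{k+1}^{\alpha},r_k^{\alpha}]$ and exceeds your formula (it is still $<s/\alpha$, so the qualitative conclusion survives, but the constants need repair).

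The paper closes this gap by a structurally different mechanism. Each node of the construction is given its own scale pair $(m_\sigma,n_\sigma)$; the packing lower bound $\dim_P C\geq\eps^2$ then comes from Lemma~\ref{l:pack}~\eqref{eq:pack1}, which only needs a uniform lower bound $2^{(\eps^2+o(1))n_\sigma}$ on the branching at every node, not homogeneity. The image is controlled by ordering the nodes lexicographically and forcing super-exponential separation of consecutive scales (condition \eqref{eq:j1}), so that at any fixed covering scale only one node is ``mid-branching'' and all other branches contribute polynomially many intervals. Moreover, the paper does not rely solely on the H\"older modulus as you do: at each node it pigeonholes on the sample path, retaining only the $2^{(\eps^2+o(1))n}$ children whose images meet a single value interval of $\iJ_n$, which compresses the image far more aggressively than your bound through the diameter $R_k^{\alpha}$ of the whole window and makes the final estimate $\dim_P B(C)\leq \eps^2/(\alpha+\eps^2)<\eps^2/\alpha$ clean. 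Your deterministic capping idea is a reasonable alternative in spirit, but as written the proposal does not contain the ideas (scale separation plus a packing bound that tolerates inhomogeneity) needed to make it work.
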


The answer for Problem~\ref{p:HP} only uses the notion of Hausdorff dimension.

\begin{theorem} \label{t:mixed} Let $0<\alpha<1$ and let $\{B(t): t\in [0,1]\}$ be a fractional Brownian motion of Hurst index $\alpha$. For a set
$D\subset [0,1]$ the following are equivalent:
\begin{enumerate}[(i)]
\item \label{mix1} $\P(\dim_P B(A)\geq (1/\alpha) \dim_H A \textrm{ for all } A\subset D)=1$,
\item \label{mix2} $\dim_{H} D\leq \alpha$.
\end{enumerate}
\end{theorem}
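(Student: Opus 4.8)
The plan is to prove the easy implication \eqref{mix1}$\Rightarrow$\eqref{mix2} by testing the statement against $A=D$ itself, and to prove \eqref{mix2}$\Rightarrow$\eqref{mix1} by reducing the packing-dimension lower bound to a single estimate that is deterministic in $A$: a uniform control on the Hausdorff content of preimages of short windows. For \eqref{mix1}$\Rightarrow$\eqref{mix2}, suppose \eqref{mix1} holds and apply the almost sure statement to the particular set $A=D$; this gives $\dim_P B(D)\ge(1/\alpha)\dim_H D$ almost surely. Since $B(D)\subseteq\R$ we always have $\dim_P B(D)\le 1$, so $(1/\alpha)\dim_H D\le 1$, i.e.\ $\dim_H D\le\alpha$. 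The substance is the converse, so assume from now on that $\dim_H D\le\alpha$.

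\emph{Step 1 (reduction to upper box dimension of images).} Using the standard characterisation $\dim_P E=\inf\{\sup_n\overline{\dim}_B E_n: E\subseteq\bigcup_n E_n\}$ together with the countable stability of $\dim_H$, I would show it suffices to prove that, almost surely, $\overline{\dim}_B B(A')\ge(1/\alpha)\dim_H A'$ for \emph{every} $A'\subseteq D$. Indeed, given any countable cover $B(A)\subseteq\bigcup_n F_n$, the sets $A_n=A\cap B^{-1}(F_n)$ cover $A$ and satisfy $B(A_n)\subseteq F_n$, so $\overline{\dim}_B F_n\ge\overline{\dim}_B B(A_n)$; since $\sup_n\dim_H A_n=\dim_H A$, the claimed box estimate applied to each $A_n$ forces $\sup_n\overline{\dim}_B F_n\ge(1/\alpha)\dim_H A$, and taking the infimum over covers gives \eqref{mix1}.

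\emph{Step 2 (reduction to a uniform window estimate).} Fix $A'\subseteq D$ and $t<\dim_H A'$, so that $\mathcal{H}^t_\infty(A')>0$. Covering $B(A')$ by $N_r(B(A'))$ intervals $K_1,\dots,K_N$ of length $r$ we have $A'\subseteq\bigcup_j\{x\in D: B(x)\in K_j\}$, and finite subadditivity of Hausdorff content yields $\mathcal{H}^t_\infty(A')\le N_r(B(A'))\cdot M_r$, where $M_r=\sup\{\mathcal{H}^t_\infty(\{x\in D: B(x)\in K\}): |K|=r\}$. Hence $N_r(B(A'))\ge\mathcal{H}^t_\infty(A')/M_r$, and together with Step 1 the whole theorem reduces to the key estimate
\[ \text{for every fixed } t<\alpha \text{ and } \eps>0,\ \text{almost surely } M_r\le r^{t/\alpha-\eps}\ \text{for all small } r. \]
Crucially, both the set $\{x\in D: B(x)\in K\}$ and the bound are independent of $A'$; this is exactly what delivers the simultaneity over all $A'\subseteq D$, after which one lets $\eps\to0$ and $t\uparrow\dim_H A'$.

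\emph{Step 3 (the key estimate, the crux).} Writing $\ell=r^{1/\alpha}$, a first–moment bound on a uniform scale-$\ell$ grid gives $\E\,\mathcal{H}^t_\infty(\{x\in D: B(x)\in K\})\lesssim\ell^t\cdot r\cdot N_\ell(D)=\ell^{t+\alpha}N_\ell(D)$, which yields the desired $\ell^t=r^{t/\alpha}$ only when $N_\ell(D)\lesssim\ell^{-\alpha}$, i.e.\ when the box dimension of $D$ is at most $\alpha$; this can fail under $\dim_H D\le\alpha$ alone. The resolution is that, for $t>0$, $\mathcal{H}^t_\infty$ annihilates the box–heavy but Hausdorff–light parts of $D$, so the uniform grid must be replaced by a Hausdorff–efficient variable–scale cover of $D$, available since $\mathcal{H}^{\alpha+\eps'}(D)=0$ for $\eps'>0$. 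I would then estimate the window content recursively: an interval of the cover of length $\rho\ge\ell$ whose image meets $K$ is subdivided, and only those children meeting both $D$ and a slightly enlarged window are retained, the survival probability of each child being controlled by the strong local nondeterminism of $B$; the recursion balances the geometric decay of retained content against the branching and terminates at scale $\ell$, giving $\E M_r\lesssim r^{t/\alpha-\eps/2}$. Finally, to pass from fixed $(y,r)$ to all windows and scales at once, I would discretise $y$ over an $r$-net and $r$ over a geometric sequence, bound the upper tail of the window content through high–moment estimates again furnished by strong local nondeterminism, union–bound over the $O(r^{-1}\log(1/r))$ resulting events, and fill the gaps using monotonicity of $K\mapsto\mathcal{H}^t_\infty(\{x\in D: B(x)\in K\})$. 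The main obstacle is precisely this step: making the recursive content estimate rigorous so that the Hausdorff–light, box–heavy portions of $D$ are correctly discounted, while obtaining tails sharp enough to survive the union bound that upgrades the estimate to hold uniformly in $K$ and $r$. (The same estimate with the window degenerating to a point recovers the stated corollary that all level sets of $B|_D$ have Hausdorff dimension zero.)
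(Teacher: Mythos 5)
Your easy implication and your Steps 1--2 are sound: reducing \eqref{mix1} to the statement that $\overline{\dim}_M B(A')\geq(1/\alpha)\dim_H A'$ for all $A'\subset D$ via countable stability is valid (it is the mirror image of the countable-stability step at the end of the paper's proof), and the observation that a bound on $M_r=\sup\{\mathcal{H}^t_\infty(B^{-1}(K)\cap D):|K|=r\}$, being independent of $A'$, delivers the required uniformity is correct. The problem is that everything then rests on your Step 3, which you do not prove and explicitly flag as the main obstacle. That single estimate \emph{is} the theorem: all of the probabilistic and combinatorial content of the result is concentrated there, so as written the proposal has a genuine gap rather than a complete argument. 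Your sketch of Step 3 (efficient Hausdorff covers, a recursive survival estimate driven by strong local nondeterminism, a union bound over discretised windows) points in the right direction --- the paper's engine is indeed Pitt's strong local nondeterminism, via Lemma~\ref{l:sln} --- but none of it is carried out.

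Beyond being unproven, your single-scale window formulation introduces a difficulty that the paper's proof is structured specifically to avoid. Under $\dim_H D\leq\alpha$ the only usable covers of $D$ are variable-scale, and once you cover $B^{-1}(K)\cap D$ by the members of such a cover whose image meets a window $K$ of length $r=2^{-\alpha N}$, the intervals of order $m\gg N$ contribute roughly $2^{\alpha(m-N)}\cdot 2^{3\eps m}$ terms of size $2^{-tm}$ to the $t$-content; since $t<\alpha$ this is an increasing geometric series in $m$, so the scales of the cover finer than $r^{1/\alpha}$ are not controlled by $r^{t/\alpha}$ and your target bound $M_r\leq r^{t/\alpha-\eps}$ does not follow from this route. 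The paper sidesteps this mismatch entirely: it never estimates the content of the preimage of a single window, but instead proves $\dim_H(B^{-1}(E)\cap D)\leq\alpha\,\overline{\dim}_M E+5\eps$ for \emph{all} $E\subset\R$ simultaneously, covering $E$ at every order $n$ by value intervals of $\iJ_n$ and pairing them only with the order-$n$ time intervals of the cover, so that time and value scales are matched throughout. The inputs making this work are Lemma~\ref{l:decomp} (decompose $D$ into pieces admitting finite covers of small $H^{\alpha+\eps}$-content, plus a dimension-zero remainder), Lemma~\ref{l:balance} (rebalance such a cover to be $(\alpha+\eps)$-balanced without increasing its content), and Theorem~\ref{t:heart} (for balanced collections, almost surely every order-$n$ value interval meets the images of at most $2^{3\eps n}$ order-$n$ members). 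To complete your approach you would either have to prove your window estimate along a sequence of scales --- which would force you to rebuild essentially this machinery and additionally resolve the fine-scale summation issue --- or switch to the paper's preimage-of-$E$ formulation.
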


\begin{corollary} Let $0<\alpha<1$ and let $\{B(t): t\in [0,1]\}$ be a fractional Brownian motion of Hurst index $\alpha$. Let $D\subset [0,1]$ with $\dim_H D\leq \alpha$ and define a random family of sets $\Gamma_D=\{E\subset B(D): \dim_H E=\dim_P E\}$. Then
\[\P(\dim_H (B^{-1}(E)\cap D)=\alpha \dim_H E \textrm{ for all } E\in \Gamma_D)=1.\]
In particular, all level sets of $B|_{D}$ have Hausdorff dimension zero almost surely.
\end{corollary}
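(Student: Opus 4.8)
The plan is to derive the corollary from Theorem~\ref{t:mixed} together with the H\"older continuity of $B$, the key device being a well-chosen random set $A$ plugged into the uniform (quantifier-over-all-$A$) statements.

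First I would fix the two relevant almost sure events. Since $\dim_H D \le \alpha$, Theorem~\ref{t:mixed} gives that, almost surely,
$$\dim_P B(A) \ge (1/\alpha)\dim_H A \quad \textrm{for all } A \subset D.$$
Separately, $B$ is almost surely $\gamma$-H\"older continuous for every $\gamma < \alpha$, so on a full-probability event
$$\dim_H B(A) \le (1/\alpha)\dim_H A \quad \textrm{for all } A \subset D.$$
I would work on the intersection of these two events, which still has probability one. The essential point is that both bounds hold \emph{simultaneously for every} $A \subset D$, so they may be applied to sets $A$ that depend on the realization of $B$.

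Next, fixing a realization in this event and an arbitrary $E \in \Gamma_D$, I would set $A = B^{-1}(E) \cap D$. Since $E \subset B(D)$, this choice satisfies $B(A) = E$. The H\"older bound then yields $\dim_H E = \dim_H B(A) \le (1/\alpha)\dim_H A$, hence $\dim_H A \ge \alpha \dim_H E$; Theorem~\ref{t:mixed} yields $\dim_P E = \dim_P B(A) \ge (1/\alpha)\dim_H A$, hence $\dim_H A \le \alpha \dim_P E$. Because $E \in \Gamma_D$ means $\dim_P E = \dim_H E$, the two inequalities combine to give $\dim_H(B^{-1}(E)\cap D) = \alpha \dim_H E$, which is the asserted formula, holding uniformly over $E \in \Gamma_D$ on the fixed full-probability event.

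Finally, for the level set statement I would observe that for any $y \in B(D)$ the singleton $E = \{y\}$ has $\dim_H E = \dim_P E = 0$, so $\{y\} \in \Gamma_D$; the formula then forces $\dim_H(B^{-1}(\{y\})\cap D) = 0$, while for $y \notin B(D)$ the level set is empty. I do not expect a genuine obstacle: the whole content is already encoded in the uniform form of Theorem~\ref{t:mixed} and the H\"older estimate. The only step requiring care is precisely that the quantification is uniform over all $A \subset D$, which is what makes the $B$-dependent set $A = B^{-1}(E)\cap D$ an admissible substitution; a merely pointwise-in-$A$ version of either estimate would not license this argument.
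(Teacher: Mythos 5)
Your proof is correct and is exactly the intended derivation: the paper presents the corollary as an immediate consequence of Theorem~\ref{t:mixed}, obtained by applying the uniform lower bound to the random set $A=B^{-1}(E)\cap D$ and combining it with the H\"older upper bound, just as you do. Your closing remark correctly identifies the crux, namely that the uniform quantification over all $A\subset D$ is what legitimizes substituting a $B$-dependent set.
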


\begin{remark} The condition $\dim_H D\leq \alpha$ is necessary even in the above corollary. Indeed,
if $D\subset [0,1]$ is compact with $\dim_H D>1/2$, then the zero set $\iZ$ of a linear Brownian motion satisfies
$\dim_H (D\cap \iZ)>0$ with positive probability, see \cite{Ka2}.
\end{remark}

We partially answer Problem~\ref{p:H} by considering sets defined by digit restrictions and
self-similar sets. The problem remains open in general.

\begin{definition} We define sets by restricting which digits can occur at a certain position of their dyadic expansion.
For $S\subset \N^{+}$ let $D_S\subset [0,1]$ be the compact set
\[D_S=\left\{\sum_{n=1}^{\infty} x_n 2^{-n}: x_n\in \{0,1\} \textrm{ if } n\in S \textrm{ and } x_n=0 \textrm{ if } n\notin S\right\}.\]
\end{definition}

\begin{theorem} \label{t:digit} Let $0<\alpha<1$ and let $\{B(t): t\in [0,1]\}$ be a fractional Brownian motion of Hurst index $\alpha$.
For every $S\subset \N^{+}$ the following are equivalent:
\begin{enumerate}[(i)]
\item \label{digi1} $\P(\dim_H B(A)=(1/\alpha) \dim_H A \textrm{ for all } A\subset D_S)=1$,
\item \label{digi2} $\dim_{MA} D_S\leq \alpha$ or $\dim_H D_S=0$.
\end{enumerate}
\end{theorem}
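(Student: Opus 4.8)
The implication \eqref{digi2}$\Rightarrow$\eqref{digi1} is the easy direction. If $\dim_{MA} D_S\le\alpha$ then \eqref{digi1} is exactly Theorem~\ref{t:main}. If instead $\dim_H D_S=0$, then every $A\subset D_S$ has $\dim_H A=0$, and since $B$ is almost surely $\gamma$-H\"older for all $\gamma<\alpha$ we get $\dim_H B(A)\le(1/\alpha)\dim_H A=0$ for \emph{all} such $A$ on a single full-measure event; hence \eqref{digi1} holds.

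For \eqref{digi1}$\Rightarrow$\eqref{digi2} the plan is to argue by contraposition: assuming $\dim_{MA} D_S>\alpha$ and $\dim_H D_S>0$, I would exhibit, on an event of full probability, a single random set $A\subset D_S$ with $\dim_H B(A)<(1/\alpha)\dim_H A$, contradicting \eqref{digi1}. The subcase $\dim_H D_S>\alpha$ is immediate, since then $A=D_S$ already gives $\dim_H B(A)\le 1<(1/\alpha)\dim_H D_S$, so assume $0<\dim_H D_S\le\alpha$. Writing $a_n=|S\cap[1,n]|$, I would use the standard facts that $\dim_H D_S=\liminf_n a_n/n$ and, for digit sets, $\dim_{MA} D_S=\dim_{qA} D_S$. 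From $\dim_{qA} D_S>\alpha$ one extracts $\eps,\delta>0$ and pairs $m_k\le(1-\eps)n_k$ with $n_k-m_k\to\infty$ and $a_{n_k}-a_{m_k}\ge(\alpha+\delta)(n_k-m_k)$; crucially the separation forces each block $(m_k,n_k]$ to occupy a fixed proportion of its scale, $n_k-m_k\ge\eps n_k$.

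The mechanism is an overlap phenomenon read off from the almost sure uniform modulus of continuity of $B$: there is a random $C<\infty$ with $\osc(B;I)\le C|I|^{\alpha}\sqrt{\log(1/|I|)}$ for every interval $I$. A parent dyadic interval $J$ of length $2^{-m_k}$ has image of diameter $\le C2^{-\alpha m_k}\sqrt{m_k}$, hence is covered by at most $C2^{\alpha(n_k-m_k)}\sqrt{m_k}$ intervals (``slots'') of length $2^{-\alpha n_k}$, whereas $D_S\cap J$ carries $2^{a_{n_k}-a_{m_k}}\ge 2^{(\alpha+\delta)(n_k-m_k)}$ sub-branches at level $n_k$; by pigeonhole one slot receives at least $2^{\delta(n_k-m_k)}/(C\sqrt{m_k})$ of them. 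I would let $A$ retain, inside each parent, exactly such a colliding family. The decisive feature is that a collision \emph{persists}: the kept branches all lie in a single slot of length $2^{-\alpha n_k}$, so across a block they add $\delta(n_k-m_k)$ to the domain covering exponent but $0$ to the image covering exponent, and this deficit is not undone at finer scales. To keep $\dim_H A>0$ I would interleave these blocks with a background drawn from $D_S$ itself, retaining branches at a density $\gamma$ with $0<\gamma<\min\{\alpha,\dim_H D_S\}$ (available since $\dim_H D_S>0$ forces $a_n\ge(\dim_H D_S-o(1))n$); on background scales the retained branches do not collide and contribute $\gamma$ per level to both exponents. Letting $a_n$ and $b_n$ be the domain and (hierarchical) image covering exponents, one gets $a_n-b_n\ge\delta\sigma_n$, where $\sigma_n$ is the total block length completed by level $n$. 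A Frostman mass distribution (uniform on the retained tree) yields $\dim_H A=\liminf_n a_n/n$, while covering $B(A)$ along the block-end scales gives $\dim_H B(A)\le\liminf_k b_{n_k}/(\alpha n_k)$; choosing the blocks along a subsequence and the background lengths to grow geometrically, one arranges $\liminf_n a_n/n>0$ while the deficit $\delta\sigma_n$ makes the latter liminf strictly below $(1/\alpha)\liminf_n a_n/n$.

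The main obstacle is precisely the bookkeeping of these two liminfs: the block deficit must survive into the liminf defining $\dim_H B(A)$ without collapsing $\dim_H A$ to $0$. This is where both hypotheses are consumed---$\dim_H D_S>0$ supplies a positive-dimensional background, while $\dim_{MA} D_S>\alpha$ supplies $\eps$-separated blocks each of relative length $\ge\eps$, so that $\sigma_n$ retains a positive lower density along the scales where $a_n/n$ is near its liminf. The delicate point is selecting the high-density blocks so that they are not too sparse; I expect the right device is to work with the \emph{modified} (rather than merely quasi-) Assouad dimension, whose countable-decomposition infimum should guarantee that $\dim_{MA} D_S>\alpha$ produces high-density blocks of positive \emph{upper} density---exactly the input the alignment argument needs. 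Establishing this extraction, together with the Frostman lower bound for the random set $A\subset D_S$, is the technical crux.
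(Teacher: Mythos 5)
Your overall architecture is the paper's: contraposition, a pigeonhole collision of many level-$n_k$ branches of a level-$m_k$ parent into a single value interval of length $2^{-\alpha n_k}$ (via the a.s.\ modulus of continuity), a Cantor-type set $C\subset D_S$ built by alternating such collision blocks with a positive-density background, a Frostman lower bound for $\dim_H C$, and an upper bound for $\dim_H B(C)$ via lower Minkowski dimension along block-end scales. The easy direction, including the $\dim_H D_S=0$ case via H\"older continuity, is handled correctly.

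However, there is a concrete gap in the hard direction, and it sits exactly where you place the ``technical crux.'' You extract blocks only with the lower bound $n_k-m_k\geq\eps n_k$; you never control the block length from \emph{above}. This matters: inside a block $(m_k,n_k]$ the retained set is a colliding family of $2^{\delta(n_k-m_k)-o(n_k)}$ level-$n_k$ descendants per parent, and at an intermediate scale $n\in(m_k,n_k)$ the number of their ancestors can be as small as $\max\{(\#\text{parents}),\,2^{\delta(n_k-m_k)-(n_k-n)}\}$. If $m_k=o(n_k)$ (which your extraction permits), the covering exponent of $C$ at such intermediate scales, and hence the Frostman exponent, collapses toward $0$, so $\dim_H C$ can vanish and no contradiction with \eqref{digi1} is obtained. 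The paper removes this obstruction with Lemma~\ref{l:tech}: a bisection argument using submultiplicativity of the counts $N_n(D,I)$ shows one may take $m_k=(1-\eps+o(1))n_k$, i.e.\ blocks of relative length \emph{exactly} $\eps$ up to $o(1)$, and this normalization is what makes the intermediate-scale measure estimates (the case $m_k(1+\eps)<n<n_k$ in the paper's proof) close. Your closing guess --- that the missing input is ``positive upper density'' of the blocks supplied by the countable-decomposition infimum in $\dim_{MA}$ --- points in the wrong direction: for digit sets $\dim_{MA}D_S=\dim_{qA}D_S$ (Fact~\ref{f:d}) and no decomposition is involved; the paper in fact makes the blocks extremely sparse ($n_k/m_{k+1}\to0$), and additionally thins the background on intervals $(\ell_k,m_k]$ so that the density of the retained tree at scale $m_k$ is asymptotically $\dim_H D_S$, which is what aligns the dip of the image exponent at the scales $n_k$ with the collision boost in the domain exponent. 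So the mechanism is right, but the two devices that actually make the liminf bookkeeping work --- the upper bound on block length and the background renormalization --- are absent from your plan, and the first of these is not optional.
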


\begin{definition} A compact set $D\subset \R^d$ is called \emph{self-similar}
if there is a finite set $\{F_i\}_{i\leq k}$ of contracting similarities of $\R^d$
such that $D=\bigcup_{i=1}^{k} F_i(D)$.
\end{definition}

Let $D\subset [0,1]$ be a self-similar set. Recently Fraser et al.\ \cite[Theorem~1.3]{FHOR} proved that if $D$ satisfies the so-called weak separation property (a weakening of the open set condition), then $\dim_H D=\dim_A D$; otherwise $\dim_A D=1$.
Hence Theorem~\ref{t:main} and Fact~\ref{f:3d} yield that if $D$ is a self-similar set
with the weak separation property then \eqref{iH} holds iff $\dim_{MA} D\leq \alpha$.
We prove that this remains true regardless of separation conditions.

\begin{theorem} \label{t:ss} Let $0<\alpha<1$ and let $\{B(t): t\in [0,1]\}$ be a fractional Brownian motion of Hurst index $\alpha$.
For a self-similar set $D\subset [0,1]$ the following are equivalent:
\begin{enumerate}[(i)]
\item \label{i:ss1} $\P(\dim_H B(A)=(1/\alpha) \dim_H A \textrm{ for all } A\subset D)=1$,
\item \label{i:ss2}  $\dim_{MA} D\leq \alpha$.
\end{enumerate}
\end{theorem}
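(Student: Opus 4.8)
The plan is to prove the two implications separately; the forward implication \eqref{i:ss2}$\Rightarrow$\eqref{i:ss1} is immediate, since a self-similar set is compact and $\dim_{MA}D\le\alpha$ is exactly the hypothesis of Theorem~\ref{t:main}, which gives \eqref{i:ss1}. All the work lies in the converse, which I would argue by contraposition: assuming $\dim_{MA}D>\alpha$, I will produce (almost surely, or at least with positive probability) a set $A\subset D$ with $\dim_H B(A)<(1/\alpha)\dim_H A$, so that \eqref{i:ss1} fails. First dispose of the degenerate case: if $D$ is a single point then $\dim_{MA}D=0\le\alpha$, so we may assume $D$ has at least two points, whence $\dim_H D>0$. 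The organizing tool is the dichotomy of Fraser, Henderson, Olson and Robinson~\cite{FHOR}: either $D$ has the weak separation property, in which case $\dim_A D=\dim_H D$, or it does not, in which case $\dim_A D=1$.

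The separated case is easy. If $D$ has the weak separation property then $\dim_A D=\dim_H D$, and combining this with Fact~\ref{f:3d} forces $\dim_H D=\dim_P D=\dim_{MA}D=\dim_A D$, so our assumption gives $\dim_H D>\alpha$. Then Theorem~\ref{t:mixed} applies: since $\dim_H D>\alpha$, statement \eqref{mix1} fails, so with positive probability there is $A\subset D$ with $\dim_P B(A)<(1/\alpha)\dim_H A$. As $\dim_H B(A)\le\dim_P B(A)$, this same $A$ satisfies $\dim_H B(A)<(1/\alpha)\dim_H A$, and \eqref{i:ss1} fails. No new construction is needed here.

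The case of $D$ \emph{without} the weak separation property is the heart of the theorem. Here $\dim_A D=1$, and I would first record that in fact $\dim_{MA}D=1$, so that \eqref{i:ss2} genuinely fails for every such $D$. This is where homogeneity enters: the failure of the weak separation property yields, for every $m$, arbitrarily deep pairs of distinct cylinders $f_u,f_v$ of equal ratio with $|f_u(0)-f_v(0)|$ much smaller than the common cylinder size; iterating over words in $\{u,v\}^k$ and using self-similarity to reproduce the resulting dense clusters inside cylinders of every scale, one packs of order $(R/r)^{1-\eta}$ many $r$-separated pieces of $D$ into balls of radius $R$ at scale pairs with $R\ge r^{1-\eps}$, for all $\eps,\eta>0$. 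Hence $\dim^{\eps}_{MA}D=1$ for every $\eps$, i.e.\ $\dim_{MA}D=1$ (equivalently $\dim_{MA}D=\dim_{qA}D=\dim_A D$ for self-similar sets, as in the Remark). Thus \eqref{i:ss2} fails, and it remains to show that \eqref{i:ss1} fails for every self-similar $D$ without the weak separation property, regardless of the value of $\dim_H D$.

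To this end I would build a bad set $A\subset D$ from these clusters, organized as a Moran construction along a rapidly decreasing, well-separated sequence of scales $\ell_1\gg\ell_2\gg\cdots$, where at level $k$ each surviving piece is subdivided using a cluster of $m_k$ pieces of size $\ell_{k+1}$ lying in a common short time-window of length $w_{k+1}$; a Frostman/mass-distribution argument bounds $\dim_H A$ from below, while the almost sure modulus of continuity of $B$ sends each window to a spatial set of diameter $\lesssim w_{k+1}^{\alpha}$, yielding an efficient cover of $B(A)$. The genuine difficulty — and the main obstacle — is that when $\dim_H D\le\alpha$ (which can occur even without the weak separation property, e.g.\ for equicontracting systems with near-coincidences but no dimension drop), the deterministic H\"older bound alone only gives $\dim_H B(A)\le(1/\alpha)\dim_H A$ with no strict gain: forcing many separated pieces into a short window makes them overlap and destroys the dimension of $A$, whereas keeping them separated enlarges the window and erases the compression. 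The strict inequality must therefore come from the recurrence of $B$, namely that time-separated pieces whose images revisit a common spatial sub-window can be retained in positive proportion (by a pigeonhole/expected-occupation estimate), collapsing $B(A)$ while preserving a positive-dimensional $A$. Making this retention quantitative — balancing $m_k$, $w_{k+1}$ and the scale gaps so that the lower bound for $\dim_H A$ and the upper bound for $\dim_H B(A)$ leave a genuine gap almost surely — is the crux; it reuses the second-moment machinery behind Theorem~\ref{t:p}, the point being that on the homogeneous set $D$ this machinery, which in general controls only packing dimension, can be pushed to control Hausdorff dimension, exactly as in the hard direction of the digit-restriction result Theorem~\ref{t:digit}.
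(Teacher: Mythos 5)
Your forward implication and your treatment of the weak separation case are fine (in fact the latter is even easier than you make it: once $\dim_H D>\alpha$ you can take $A=D$, since $\dim_H B(D)\leq 1<(1/\alpha)\dim_H D$). The proposal breaks down at the pivot of your case analysis, namely the claim that every self-similar set without the weak separation property has $\dim_{MA}D=1$. The clusters produced by iterating a near-coincidence $|f_u(0)-f_v(0)|\ll \Sim(f_u)$ live at scale pairs $(R,r)$ with $\log(R/r)=o(\log(1/r))$: to accumulate $(R/r)^{1-\eta}$ separated pieces you must take $r$ comparable to the tiny coincidence gap, so the pair violates the constraint $r^{1-\eps}\leq R$ for every fixed $\eps$ --- and excluding exactly these pairs is the entire point of $\dim^{\eps}_{A}$. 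So the construction you sketch witnesses $\dim_A D=1$ but says nothing about $\dim^{\eps}_{MA}D$. Worse, the claim cannot be repaired: there are self-similar sets failing the weak separation property with $\overline{\dim}_M D$ arbitrarily small (e.g.\ Fraser's example \cite{Fr} with small contraction ratios), and for such a set with $\dim_{MA}D\leq\alpha$ Theorem~\ref{t:main} says \eqref{i:ss1} \emph{holds}; your stated goal of proving that \eqref{i:ss1} fails ``for every self-similar $D$ without the weak separation property, regardless of the value of $\dim_H D$'' is therefore false. The dichotomy of \cite{FHOR} is simply not the right organizing principle here; the paper's proof makes no use of it and instead extracts the needed clusters directly from the hypothesis $\dim_{MA}D>\alpha$ via Lemma~\ref{l:tech}, which hands you scales with $m_k=(1-\eps+o(1))n_k$ --- i.e.\ clusters that genuinely respect the $\eps$-Assouad scale constraint.

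For the construction itself you correctly identify the mechanism (Moran construction, pigeonhole forcing many pieces into one value interval, Frostman lower bound), but you leave the crux --- the quantitative retention --- explicitly unresolved, and the tool you reach for (the second-moment machinery behind Theorem~\ref{t:p}) is not what is needed. The paper's argument is softer: by Lemma~\ref{l:ub} the image $B(\Phi_\theta(I_k))$ of a cluster of $p_k=2^{(\alpha+\eps+o(1))(n_k-m_k)}$ pieces has diameter $2^{-(\alpha+o(1))(n_k+m_k)}$, hence meets only $2^{(\alpha+o(1))(n_k-m_k)}$ value intervals of order $2n_k$, so a deterministic pigeonhole retains $2^{(\eps^2+o(1))n_k}$ pieces mapping into a single such interval. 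The other ingredient you are missing is Falconer's theorem \cite{F} that $\underline{\dim}_M D=\overline{\dim}_M D=t$ for self-similar sets, which supplies ``filler'' levels of $q_k=2^{(t+o(1))d_k}$ well-separated similar copies between consecutive cluster levels; these keep $\dim_H C\geq t/(2-\eps^2)>t/2$ while the cluster levels force $\dim_H B(C)\leq t/(2\alpha)$, giving the contradiction with \eqref{i:ss1} uniformly in the value of $t$, exactly the regime ($t\leq\alpha$) that you flag as the obstacle.
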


Falconer~\cite[Theorem~4]{F} proved that $\dim_H D=\dim_P D$
for every self-similar set $D$. First Fraser~\cite[Section~3.1]{Fr} constructed a self-similar set $D\subset [0,1]$
for which $\dim_H D<\dim_A D$, answering a question of Olsen~\cite[Question~1.3]{O}. A positive answer to the following problem
together with Theorem~\ref{t:main} would immediately imply Theorem~\ref{t:ss}.

\begin{problem} Does $\dim_H D=\dim_{MA} D$ hold for all self-similar sets $D\subset [0,1]$?
\end{problem}

The following restriction theorem for fractional Brownian motion is due to Angel, Balka, M\'ath\'e, and Peres~\cite{ABMP}. As an application of our theory, we give a new proof for this result based on Theorem~\ref{t:main}.

\begin{theorem}[Angel et al.] \label{t:ABMP}
Let $0<\alpha<1$ and let $\{B(t): t\in [0,1]\}$ be a fractional Brownian motion of Hurst index $\alpha$.
Then, almost surely, for all $A\subset [0,1]$ if $B|_{A}$ is
$\beta$-H\"older continuous for some $\beta>\alpha$ then $\dim_H A\leq 1-\alpha$.
\end{theorem}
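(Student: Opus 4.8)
The plan is to derive the theorem from Theorem~\ref{t:main} in two stages: first I extract a clean ``trapping'' consequence of that theorem, and then I reduce the general statement to it by decomposing $A$ along the level sets of $B$. I begin with routine reductions. If $B|_A$ is $\beta$-Hölder for some $\beta>\alpha$ and some finite constant, then it is $(\alpha+1/j)$-Hölder with constant $C$ for some $j,C\in\N^{+}$, so the event in the theorem is a countable intersection and it suffices to fix $\beta>\alpha$ and $C\in\N^{+}$ and to prove that, almost surely, every $A\subseteq[0,1]$ with $|B(s)-B(t)|\le C|s-t|^{\beta}$ for all $s,t\in A$ satisfies $\dim_H A\le 1-\alpha$. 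Since this Hölder inequality passes to the closure by continuity of $B$, and $\dim_H A\le\dim_H\overline{A}$, I may assume that $A$ is compact.

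The engine is the following consequence of Theorem~\ref{t:main}. Fix a deterministic $D$ with $\dim_{MA}D\le\alpha$. On the almost sure event furnished by Theorem~\ref{t:main}, every $A\subseteq D$ satisfies $\dim_H B(A)=(1/\alpha)\dim_H A$; if moreover $B|_A$ is $\beta$-Hölder, then $\dim_H B(A)\le(1/\beta)\dim_H A$, and since $\beta>\alpha$ this forces $\dim_H A=0$. Intersecting these events over a countable family of deterministic sets of modified Assouad dimension at most $\alpha$, I obtain: almost surely, any $\beta$-Hölder set contained in one of these fixed sets has Hausdorff dimension zero.

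The crux is to pass from this to a bound on arbitrary Hölder sets, and here the sharp value $1-\alpha$ (rather than $0$) enters: a level set $\{t:B(t)=y\}$ is $\beta$-Hölder for every $\beta$, has dimension $1-\alpha$, and, for $\alpha<1/2$, has modified Assouad dimension larger than $\alpha$, so Theorem~\ref{t:main} genuinely fails on it. To handle this I would slice $A$ by the value of $B$: writing $A=\bigcup_k A_k$ with $A_k=A\cap B^{-1}([k\delta,(k+1)\delta))$, the oscillation of $B$ on each $A_k$ is at most $\delta$. The fibre (level-set) direction of each $A_k$ contributes at most $1-\alpha$ by the classical level-set estimate $\sup_y\dim_H\{t:B(t)=y\}\le 1-\alpha$, while the transverse direction -- the spreading of $A_k$ across distinct values of $B$ -- should be governed by a set of modified Assouad dimension at most $\alpha$, on which the previous paragraph forces dimension zero. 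Combining the two contributions over the countable slicing would yield $\dim_H A\le 1-\alpha$.

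I expect the genuine obstacle to be the justification that the transverse direction is controlled by a deterministic set of small modified Assouad dimension, which is delicate for two reasons. First, Theorem~\ref{t:main} applies only to deterministic sets, whereas the transverse sets extracted from $A$ depend on the realisation of $B$; this must be circumvented by drawing the slicing scales and the trapping sets from a fixed countable deterministic family and intersecting the corresponding almost sure events. Second, the claim that a $\beta$-Hölder, essentially injective transversal of $B$ is forced to be sparse at polynomially separated scales (that is, to have modified Assouad dimension at most $\alpha$) is not a deterministic consequence of Hölder continuity alone: it must exploit the genericity of the sample path, which is the real additional input beyond the black-box use of Theorem~\ref{t:main}. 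Keeping the threshold sharp requires that the $(1/\beta)$ Hölder factor be spent entirely on the transverse direction and never leak into the fibre estimate.
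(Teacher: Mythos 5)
Your first stage is sound and matches the paper's: on the almost sure event of Theorem~\ref{t:main} for a deterministic $D$ with $\dim_{MA}D\le\alpha$, any $A\subset D$ on which $B$ is $\beta$-H\"older with $\beta>\alpha$ satisfies $(1/\alpha)\dim_H A=\dim_H B(A)\le(1/\beta)\dim_H A$, hence $\dim_H A=0$. You also correctly identify why the theorem cannot follow from this alone (level sets). But the second stage is not a proof: you propose slicing $A$ into $A_k=A\cap B^{-1}([k\delta,(k+1)\delta))$ and splitting $\dim_H A_k$ into a ``fibre'' contribution of at most $1-\alpha$ and a ``transverse'' contribution to be killed by the first stage, and you yourself flag that you cannot justify that the transverse part is trapped inside a deterministic set of modified Assouad dimension at most $\alpha$. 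That is precisely the missing idea, and it is not a technicality: there is no canonical ``transverse subset'' of $[0,1]$ attached to $A_k$, Hausdorff dimension does not decompose additively along such a fibration, and a \emph{countable} deterministic family of trapping sets cannot catch every compact set of dimension exceeding $1-\alpha$. So the argument has a genuine gap at its crux.

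The paper closes this gap with a different mechanism: random fractal intersection. Let $K=\Gamma(1-\alpha)$ be a percolation limit set independent of $B$. Theorem~\ref{t:H=MA} (proved via Athreya's branching-process large deviation bound, Theorem~\ref{t:A}) gives $\dim_{MA}K=\dim_H K=\alpha$ almost surely, so by Fubini the first-stage ``engine'' applies to $K$: almost surely in the product space, every $\beta$-H\"older subset of $K$ has Hausdorff dimension zero. On the other hand, Hawkes' theorem (Theorem~\ref{t:Hawkes}) guarantees that any fixed compact $C$ with $\dim_H C>1-\alpha$ meets $K$ in a set of positive Hausdorff dimension with positive probability. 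If some compact $C$ with $\dim_H C>1-\alpha$ carried a $\beta$-H\"older restriction of $B$, then $E=C\cap K$ would, with positive probability, be a positive-dimensional $\beta$-H\"older subset of $K$ --- a contradiction. A measurability lemma (Lemma~\ref{l:Borel}) is needed to make the Fubini argument legitimate. This is the ingredient your sketch is missing: a single random set that simultaneously has small modified Assouad dimension and, by an intersection theorem, detects every compact set of dimension larger than $1-\alpha$. Without it (or some substitute of comparable strength), your plan does not go through.
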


In fact, in \cite{ABMP} a stronger form of the above theorem was proved, where Hausdorff dimension was replaced by
upper Minkowski dimension. Theorem~\ref{t:ABMP} also implies the following, see \cite[Section~8]{ABMP} for the deduction.
%%%
\begin{theorem*}[Angel et al.] Let $0<\alpha<1$ and let $\{B(t): t\in [0,1]\}$ be a fractional Brownian motion of Hurst index $\alpha$.
Then, almost surely, for all $A\subset [0,1]$ if $B|_{A}$ is of bounded variation then $\dim_H A\leq \max\{1-\alpha,\alpha\}$.
In particular,
\[\mathbb{P}(\exists A: \dim_H A>\max\{1-\alpha,\alpha\} \textrm{ and } B|_{A} \textrm{ is non-decreasing})=0.\]
\end{theorem*}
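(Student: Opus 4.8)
The plan is to derive this statement purely deterministically from the H\"older restriction theorem, Theorem~\ref{t:ABMP}. I would work on the almost sure event $\Omega_0$ on which the conclusion of Theorem~\ref{t:ABMP} holds and $B$ is bounded and continuous on $[0,1]$, and show that on $\Omega_0$ every $A\subset[0,1]$ with $B|_{A}$ of bounded variation satisfies $\dim_H A\le\max\{1-\alpha,\alpha\}$. Since a non-decreasing restriction is in particular of bounded variation, the displayed probability statement is then immediate. Throughout I fix an exponent $\beta\in(\alpha,1)$; the argument yields the bound $\dim_H A\le\max\{1-\alpha,\beta\}$, and letting $\beta\downarrow\alpha$ at the very end gives the theorem.

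The main step is to split $A=G\cup R$ according to the local $\beta$-smoothness of the restriction $B|_{A}$. For $k\in\N^{+}$ I set
$$G_k=\{t\in A:\ |B(t)-B(s)|\le k|t-s|^{\beta}\ \text{for all } s\in A \text{ with } |s-t|\le 1/k\},$$
and $G=\bigcup_k G_k$. A routine local-to-global estimate (using $\|B\|_\infty<\infty$ to control pairs at distance larger than $1/k$) shows that each $B|_{G_k}$ is $\beta$-H\"older continuous; since $\beta>\alpha$, Theorem~\ref{t:ABMP} gives $\dim_H G_k\le 1-\alpha$, hence $\dim_H G\le 1-\alpha$.

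It remains to bound the rough part $R=A\setminus G$, and this is where the bounded variation hypothesis enters. By definition every $t\in R$ admits points $s\in A$ arbitrarily close to $t$ with $|B(t)-B(s)|>M|t-s|^{\beta}$ for arbitrarily large $M$, so the closed intervals $J=[\,s\wedge t,\ s\vee t\,]$ of arbitrarily small diameter form a fine (Vitali) cover of $R$, and on each such $J$ the oscillation of $B|_{A}$ is at least $M(\diam J)^{\beta}$. Given $M$ and $\delta>0$, the basic covering theorem extracts a countable disjoint subfamily $\{J_i\}$ of diameter $\le\delta$ with $R\subset\bigcup_i 5J_i$. Because the $J_i$ are disjoint and $B|_{A}$ has finite total variation $V=\Var(B,A)$, the oscillations over disjoint subintervals sum to at most $V$, giving $\sum_i(\diam J_i)^{\beta}\le V/M$. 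Thus $\mathcal{H}^{\beta}_{5\delta}(R)\le 5^{\beta}V/M$ for every $M$, so $\mathcal{H}^{\beta}(R)=0$ and $\dim_H R\le\beta$.

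Combining the two parts yields $\dim_H A\le\max\{1-\alpha,\beta\}$ for every $\beta\in(\alpha,1)$, and taking the infimum over $\beta$ gives $\dim_H A\le\max\{1-\alpha,\alpha\}$. I expect the one delicate point to be the treatment of $R$: one must verify that the Vitali selection can be carried out at arbitrarily small scales (so that the resulting family is admissible for the $\beta$-dimensional Hausdorff premeasure at level $5\delta$) and that the oscillations of a bounded variation function over disjoint subintervals genuinely sum to at most the total variation. Both facts are standard, but they are precisely where bounded variation—rather than mere continuity—is used, and they are exactly what produces the second competing bound $\alpha$ alongside the $1-\alpha$ coming from Theorem~\ref{t:ABMP}.
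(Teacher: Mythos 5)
Your deduction is correct, and it is essentially the argument the paper has in mind: the paper itself gives no proof of this statement but defers the deduction from Theorem~\ref{t:ABMP} to \cite[Section~8]{ABMP}, where the same strategy is used --- split $A$ into a countable union of pieces on which $B$ restricts to a $\beta$-H\"older function (handled by Theorem~\ref{t:ABMP}, giving the bound $1-\alpha$) and a remainder whose $\beta$-dimensional Hausdorff measure vanishes by a Vitali-type covering argument exploiting the finiteness of the total variation (giving the bound $\beta$), followed by letting $\beta\downarrow\alpha$. The two points you flag as delicate (restricting the fine cover to intervals of diameter at most $\delta$ before applying the $5r$-covering lemma, and summing oscillations over disjoint intervals against the total variation) are indeed the only places needing care, and both go through as you describe.
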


Let $\iZ$ be the zero set of $B$ and let $\iR=\{t\in [0,1]: B(t)=\max_{s\in [0,t]} B(s)\}$ denote the set of
record times of $B$. Then, almost surely, $\dim_H \iZ=1-\alpha$ and $\dim_H \iR=\alpha$,
see e.g.\ \cite[Chapter~18]{Kh} and \cite{ABMP}, respectively. Therefore $\iZ$ and $\iR$ witness that the above theorems are best possible.

\begin{remark} Theorem~\ref{t:ABMP} can be generalized by replacing Hausdorff dimension by quasi-Assouad dimension,
 the proof in \cite{ABMP} works verbatim. This yields that $\dim_H \iZ=\dim_{MA} \iZ=\dim_{qA} \iZ=1-\alpha$ almost surely.
 The proof of \cite[Proposition~1.5]{ABMP} readily implies that $\dim_H \iR=\dim_{MA} \iR=\dim_{qA} \iR=\alpha$ with probability one.
\end{remark}

\bigskip

In Section~\ref{s:prel} we outline the definitions of our main notions and some of
their basic properties. Section~\ref{s:heart} is devoted to the proof of the technical Theorem~\ref{t:heart},
which will be the basis of the proofs of Theorems~\ref{t:main} and \ref{t:mixed}. In Section~\ref{s:main} we prove our main results,
Theorems~\ref{t:main}, \ref{t:p}, and \ref{t:mixed}. In Section~\ref{s:examples} we prove Theorems~\ref{t:digit} and ~\ref{t:ss}, which answer Problem~\ref{p:H} in case of sets defined by digit restrictions and self-similar sets, respectively.
In Section~\ref{s:restriction} we reprove Theorem~\ref{t:ABMP} by applying Theorem~\ref{t:main}. In order to do so, we show that a
percolation limit set has equal Hausdorff and modified Assouad dimension almost surely.

\section{Preliminaries} \label{s:prel}

Let $X$ be a totally bounded metric space. Assume that $x\in X$, $r>0$, and $A\subset X$.
Recall that $B(x,r)$ denotes the closed ball of radius $r$ around $x$, and $N_r(A)$ is
the smallest number of closed balls of diameter $r$ required to cover $A$.
The diameter and interior of $A$ is denoted by $\diam A$ and $\inter A$, respectively.
For all $s\geq 0$ the \emph{$s$-Hausdorff content} of $X$ is defined as
\begin{equation*}
\mathcal{H}^{s}_{\infty}(X)=\inf \left\{\sum_{i=1}^\infty (\diam X_{i})^{s}: X=\bigcup_{i=1}^{\infty} X_{i}\right\}.
\end{equation*}
We define the \emph{Hausdorff dimension} of $X$ by
\begin{equation*}
\dim_H X = \inf\{s \ge 0: \mathcal{H}^{s}_{\infty}(X)=0\}.
\end{equation*}
The \emph{lower and upper Minkowski dimensions} of $X$ are respectively defined as
\begin{align*} \underline{\dim}_{M} X&=\liminf_{r\to 0+} \frac{\log N_r(X)}{-\log r}, \\
\overline{\dim}_M  X&=\limsup_{r\to 0+} \frac{\log N_r(X)}{-\log r}.
\end{align*}
Equivalently, the upper Minkowski dimension of $X$ can be written as
\[\overline{\dim}_M  X=\inf \{\gamma: \exists C<\infty \textrm{ such that }
N_r(X)\leq Cr^{-\gamma} \textrm{ for all } r>0\}.\]
We define the \emph{packing dimension} of $X$ as the modified upper Minkowski dimension:
\[\dim_{P} X=\inf \left\{\sup_i \overline{\dim}_{M} X_i: X=\bigcup_{i=1}^{\infty} X_i\right\}.\]
The \emph{Assouad dimension} of $X$ is given by
\begin{align*} \dim_{A} X=\inf \{&\gamma: \exists C<\infty \textrm{ such that, for all } 0<r\leq R, \\
&\textrm{we have }  \sup\{N_r(B(x,R)): x\in X\}\leq C(R/r)^{\gamma}\}.
\end{align*}

For more on these concepts see \cite{F2} or \cite{Ma}.

\begin{fact} For every totally bounded metric space $X$ we have
\begin{align*} \dim_H X&\leq \underline{\dim}_M X\leq \overline{\dim}_M X \leq \dim_A X, \\
\dim_H X&\leq \dim_P X\leq \dim_{MA} X\leq \dim_A X.
\end{align*}
\end{fact}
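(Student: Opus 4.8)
The plan is to establish the two displayed chains separately by covering arguments, the central trick being that fixing the larger radius $R$ at the diameter $\diam X$ converts single-scale ball counting into Minkowski-type counting.

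For the first chain I would argue as follows. To get $\dim_H X\leq \underline{\dim}_M X$, fix $\gamma>\underline{\dim}_M X$; then there is a sequence $r_k\to 0+$ with $N_{r_k}(X)\leq r_k^{-\gamma}$ for all large $k$, and covering $X$ by $N_{r_k}(X)$ sets of diameter $r_k$ shows $\mathcal{H}^{s}_{\infty}(X)\leq N_{r_k}(X)r_k^{s}\leq r_k^{s-\gamma}\to 0$ for every $s>\gamma$, so $\dim_H X\leq\gamma$. The middle inequality $\underline{\dim}_M X\leq\overline{\dim}_M X$ is immediate from $\liminf\leq\limsup$. For $\overline{\dim}_M X\leq\dim_A X$, I would set $R=\diam X$ so that $X\subset B(x,R)$ for any $x\in X$; the Assouad condition then gives $N_r(X)\leq\sup_x N_r(B(x,R))\leq C(R/r)^{\gamma}$ for every $\gamma>\dim_A X$, which yields $\overline{\dim}_M X\leq\gamma$ through the equivalent formulation of upper Minkowski dimension.

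For the second chain, $\dim_H X\leq\dim_P X$ follows from countable stability of Hausdorff dimension combined with the first chain: for any cover $X=\bigcup_i X_i$ we have $\dim_H X=\sup_i\dim_H X_i\leq\sup_i\overline{\dim}_M X_i$, and taking the infimum over all covers gives the claim. The last inequality $\dim_{MA} X\leq\dim_A X$ reduces to $\dim^{\eps}_A X\leq\dim_A X$ for each $\eps$: the defining condition for $\dim^{\eps}_A$ only constrains pairs with $r^{1-\eps}\leq R$, so it is implied by the unrestricted Assouad condition, whence every admissible $\gamma$ for $\dim_A$ is admissible for $\dim^{\eps}_A$; applying this to the trivial one-element cover gives $\dim^{\eps}_{MA} X\leq\dim^{\eps}_A X\leq\dim_A X$, and the supremum over $\eps$ finishes it. The genuinely substantive step is $\dim_P X\leq\dim_{MA} X$, for which the key lemma is that $\overline{\dim}_M Y\leq\dim^{\eps}_A Y$ for every totally bounded $Y$ and every $\eps\in(0,1)$. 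To prove the lemma I would fix $\gamma>\dim^{\eps}_A Y$, set $R=\diam Y$, and note that since $r^{1-\eps}\to 0$ as $r\to 0+$, for all sufficiently small $r$ one has $r\leq r^{1-\eps}\leq R$, so the pair $(r,R)$ satisfies the $\eps$-constraint; hence $N_r(Y)\leq\sup_x N_r(B(x,R))\leq C(R/r)^{\gamma}$ for all small $r$, bounding the limsup and giving $\overline{\dim}_M Y\leq\gamma$. Granting the lemma, any cover $X=\bigcup_i X_i$ yields $\sup_i\overline{\dim}_M X_i\leq\sup_i\dim^{\eps}_A X_i$, so $\dim_P X\leq\dim^{\eps}_{MA} X$ for each $\eps$, and the supremum over $\eps$ gives $\dim_P X\leq\dim_{MA} X$.

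The hard part is entirely contained in the lemma $\overline{\dim}_M Y\leq\dim^{\eps}_A Y$: one must check that fixing $R=\diam Y$ makes the scale-separation constraint $r^{1-\eps}\leq R$ automatically hold for all small $r$, so that the $\eps$-Assouad bound is available in exactly the regime governing the Minkowski limsup. Everything else is routine, resting on the standard covering estimate, the countable stability of Hausdorff dimension, and the monotonicity of the defining conditions for the dimensions involved.
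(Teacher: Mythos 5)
Your proposal is correct and follows essentially the same route as the paper: the first chain and $\dim_H X\leq\dim_P X$ are the standard covering arguments, and the substantive step is exactly the observation the paper records, namely $\overline{\dim}_M X\leq\dim^{\eps}_{A}X\leq\dim_A X$ for each $\eps$, which passes to countable covers to give $\dim_P X\leq\dim_{MA}X\leq\dim_A X$. Your detailed verification that taking $R=\diam Y$ makes the constraint $r\leq r^{1-\eps}\leq R$ hold for all small $r$ is precisely the content behind the paper's one-line claim.
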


\begin{proof} The inequalities in the first row and $\dim_H X\leq \dim_P X$ are well known, see e.g.\ \cite{F2}.
Clearly for all $\eps\in (0,1)$ we have $\overline{\dim}_M X\leq \dim^{\eps}_{A} X\leq \dim_A X$,
thus $\dim_P X\leq \dim_{MA} X\leq \dim_A X$.
\end{proof}

A separable, complete metric space is called a \emph{Polish space}.
A separable metric space $X$ is \emph{analytic}
if there exists a Polish space $Y$ and a continuous
onto map $f\colon Y\to X$. For more on this concept see \cite{Ke}.
The proof of the following theorem is a modification of the proof of \cite[Theorem~2]{Ha}.

\begin{theorem} \label{t:reg} Let $X$ be a totally bounded analytic metric space. Then
\[\dim_{MA} X=\sup\{\dim_{MA}K: K\subset X \textrm{ is compact}\}.\]
\end{theorem}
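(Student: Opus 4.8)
The inequality $\dim_{MA} X \ge \sup\{\dim_{MA} K : K\subseteq X \text{ compact}\}$ is immediate from monotonicity of $\dim_{MA}$ under inclusion, so the whole content is the reverse inequality. The plan is to reduce it to a single scale parameter. Fix $s < \dim_{MA} X$. Since $\dim_{MA} X = \sup_{0<\eps<1}\dim^{\eps}_{MA} X$, I can choose $\eps$ with $\dim^{\eps}_{MA} X > s$, and it then suffices to produce a compact $K \subseteq X$ with $\dim^{\eps}_{MA} K > s$, because $\dim_{MA} K \ge \dim^{\eps}_{MA} K$. Thus everything reduces to proving, for a fixed $\eps$, that the modified premeasure $\dim^{\eps}_{MA}$ is inner regular by compact sets on analytic $X$. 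Writing $\tau(E) := \dim^{\eps}_A E$, I would first record the structural properties I need: $\tau$ is monotone and closure-invariant, $\tau(\overline E) = \tau(E)$ (covering numbers of $E$ and $\overline E$ agree up to the usual comparability of radii, which does not affect the exponent), while the modified version $\dim^{\eps}_{MA} = \inf\{\sup_i \tau(\cdot_i)\}$ is monotone and countably stable.

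The second ingredient is a Baire-category criterion detecting large $\dim^{\eps}_{MA}$ on a compact set: if $K$ is compact and every nonempty relatively open $V \subseteq K$ satisfies $\tau(V) > s$, then $\dim^{\eps}_{MA} K \ge s$. To see this, take any countable cover $K = \bigcup_i K_i$; by the Baire category theorem some $K_i$ is non-meager in $K$, so $\overline{K_i}$ contains a nonempty relatively open $V \subseteq K$, whence $\tau(K_i) = \tau(\overline{K_i}) \ge \tau(V) > s$ by closure-invariance and monotonicity. Since every cover has a piece with $\tau > s$, the infimum defining $\dim^{\eps}_{MA} K$ is at least $s$. So it is enough to build a compact $K \subseteq X$ all of whose relatively open pieces are $\tau$-thick at level $> s$.

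To locate such a $K$ I would pass to the compact completion $\hat X$ (in which $X$ is still analytic) and run a condensation/kernel argument. Let $G$ be the union of all open $U \subseteq \hat X$ with $\tau(X \cap U) \le s$; by second countability $G$ is a countable such union, so $\dim^{\eps}_{MA}(X \cap G) \le s$ by countable stability. Since $\dim^{\eps}_{MA} X > s$ and $X = (X\cap G)\cup(X\setminus G)$, finite stability forces $\dim^{\eps}_{MA}(X\setminus G) > s$; in particular the closed set $F := \hat X \setminus G$ meets $X$, and by construction $\tau(X \cap U) > s$ for every open $U$ meeting $F$. This uniform local thickness of $X$ near $F$ is exactly what a Cantor construction needs: at each node I would use $\tau(X \cap U) > s$ to find, at a scale pair $r \le R$ with the gap constraint $R \ge r^{1-\eps}$, an $r$-separated family of roughly $(R/r)^{s'}$ points of $X\cap U$ (with $s < s' < \tau(X\cap U)$) and continue inside small balls about them.

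The hard part will be ensuring the resulting nested intersection is a compact subset of $X$ rather than merely of $\hat X$: a limit of points of $X$ need not lie in $X$. This is the obstacle that forces analyticity into the argument, and is where I would follow the Souslin-scheme bookkeeping of \cite[Theorem~2]{Ha}. Representing $X$ through a regular Souslin scheme of closed sets of vanishing diameter (equivalently as $f(\N^{\N})$ for a continuous $f$ on Baire space), I would build the tree of balls in $\hat X$ compatibly with the scheme, so that every infinite branch converges to a genuine point of $X$, while simultaneously realizing the separated families above at every node. Carrying the Assouad-type lower bounds $N_r(B(x,R)\cap K) \gtrsim (R/r)^{s'}$ down to the limit set at every location and scale, subject both to the $\eps$-gap constraint and to the requirement that branches stay in $X$, is the delicate combinatorial core; once it is done, the Baire criterion yields $\dim^{\eps}_{MA} K \ge s$, and letting $s \uparrow \dim_{MA} X$ completes the proof.
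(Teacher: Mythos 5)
Your proposal follows essentially the same route the paper intends: the paper gives no written proof of Theorem~\ref{t:reg}, stating only that it is ``a modification of the proof of \cite[Theorem~2]{Ha}'', and your outline is exactly that modification. Your two preparatory reductions --- the Baire-category criterion (a compact set all of whose nonempty relatively open subsets have $\dim^{\eps}_{A}>s$ satisfies $\dim^{\eps}_{MA}\geq s$) and the condensation/kernel step producing a closed $F$ near which $X$ is uniformly $\tau$-thick --- are precisely parts \eqref{eq:homog1} and \eqref{eq:homog2} of the paper's Lemma~\ref{l:homog}, and your reduction to a single fixed $\eps$ is correct since $\dim_{MA}K\geq\dim^{\eps}_{MA}K$. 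The one caveat is that the Souslin-scheme construction itself --- building the nested tree compatibly with a scheme representing $X$ so that every branch converges into $X$ while the $(R/r)^{s'}$-separated families survive at all admissible scale pairs $r\leq r^{1-\eps}\leq R$ --- is exactly the part you leave unexecuted; but this is also exactly the part the paper outsources to Haase, so your write-up is at the same level of completeness and takes the same approach as the source.
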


The following lemma is classical. For part \eqref{eq:pack1} see the proof of \cite[Proposition~3]{T} or \cite[Corollary~3.9]{F2},
for part \eqref{eq:pack2} see \cite[Lemma~3.2]{MM} or \cite[Lemma~4]{FaH}.

\begin{lemma}  \label{l:pack} Let $X$ be a totally bounded metric space.
\begin{enumerate}[(i)]
\item \label{eq:pack1} If $X$ is compact and if $\overline{\dim}_{M} U\geq s$
for every non-empty open set $U\subset X$, then $\dim_P X\geq s$.
\item \label{eq:pack2}
If $\dim_P X>s$, then there is a closed set $C\subset X$ such that $\dim_P (C\cap U)>s$ for every
open set $U$ which intersects $C$.
\end{enumerate}
\end{lemma}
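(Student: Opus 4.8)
The plan is to derive both statements from two standard structural properties of packing dimension: that $\dim_P$, as the modified upper Minkowski dimension, is \emph{countably stable} (i.e.\ $\dim_P \bigcup_i A_i = \sup_i \dim_P A_i$ for any countable family), and that $\overline{\dim}_M$ is unchanged under taking closures, $\overline{\dim}_M A = \overline{\dim}_M \cl A$. Combined with the fact that a totally bounded metric space is separable, hence Lindel\"of, these will do all the work.

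For part~\eqref{eq:pack1}, I would argue by contradiction using the Baire category theorem. Suppose $\dim_P X < s$. Then there is a countable cover $X = \bigcup_{i=1}^\infty X_i$ with $\sup_i \overline{\dim}_M X_i < s$. Since $\overline{\dim}_M X_i = \overline{\dim}_M \cl X_i$, I may replace each $X_i$ by its closure and assume the $X_i$ are closed, still with $\overline{\dim}_M X_i < s$. As $X$ is compact it is complete, hence a Baire space, so some $X_{i_0}$ has non-empty interior; let $U = \inter X_{i_0}$, a non-empty open set. Then the hypothesis gives $s \le \overline{\dim}_M U \le \overline{\dim}_M X_{i_0} < s$, a contradiction. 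Hence $\dim_P X \ge s$.

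For part~\eqref{eq:pack2}, the idea is to discard the part of $X$ on which the packing dimension is locally at most $s$. Let $\mathcal{U}$ be the collection of all open sets $U \subset X$ with $\dim_P(X \cap U) \le s$, and set $V = \bigcup_{U \in \mathcal{U}} U$, an open subset of $X$. Because $X$ is separable it is Lindel\"of, so $V$ is the union of countably many members $U_1, U_2, \dots$ of $\mathcal{U}$, and countable stability then yields $\dim_P(X \cap V) = \sup_n \dim_P(X \cap U_n) \le s$. Put $C = X \setminus V$, a closed set. Since $X = (X \cap V) \cup C$ and $\dim_P X > s \ge \dim_P(X \cap V)$, finite stability forces $\dim_P C = \dim_P X > s$, so $C$ is non-empty with the right global dimension. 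Finally, if $U$ is open with $U \cap C \ne \emptyset$ but $\dim_P(C \cap U) \le s$, then writing $X \cap U = (X \cap V \cap U) \cup (C \cap U)$ and using $\dim_P(X \cap V \cap U) \le \dim_P(X \cap V) \le s$ gives $\dim_P(X \cap U) \le s$; hence $U \in \mathcal{U}$, so $U \subset V$ and $C \cap U = \emptyset$, contradicting the choice of $U$. Therefore $\dim_P(C \cap U) > s$ for every open $U$ meeting $C$.

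The routine points are the closure-invariance of $\overline{\dim}_M$ and the (countable and finite) stability of $\dim_P$, both immediate from the definitions. The one step requiring genuine care is the passage from the uncountable union $V = \bigcup_{U \in \mathcal{U}} U$ to a countable subfamily: this is where separability via the Lindel\"of property is essential, since countable stability of $\dim_P$ does not extend to arbitrary unions. This is the main obstacle, and once it is handled the localization argument in part~\eqref{eq:pack2} closes cleanly.
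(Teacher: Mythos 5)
Your proof is correct and follows essentially the same route the paper takes for the analogous Lemma~\ref{l:homog} (and the cited classical proofs of Lemma~\ref{l:pack}): Baire category plus closure-invariance of $\overline{\dim}_M$ for part~\eqref{eq:pack1}, and deletion of the countably many ``locally small'' basic open sets plus countable stability for part~\eqref{eq:pack2}. The only cosmetic difference is that you invoke the Lindel\"of property to extract a countable subfamily where the paper works directly with a countable open basis; these are interchangeable for a separable metric space.
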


The proof of the following lemma is similar to that of Lemma~\ref{l:pack}.
For the sake of completeness we outline the proof.

\begin{lemma}  \label{l:homog} Let $X$ be a totally bounded metric space and let $0<\eps<1$.
\begin{enumerate}[(i)]
\item \label{eq:homog1} If $X$ is compact and if $\dim^{\eps}_{A} U\geq s$ for every non-empty open set $U\subset X$, then $\dim^{\eps}_{MA} X\geq s$.
\item \label{eq:homog2}
If $\dim^{\eps}_{MA} X>s$, then there is a closed set $C\subset X$ such that $\dim^{\eps}_{MA} (C\cap U)>s$ for every
open set $U$ which intersects $C$.
\end{enumerate}
\end{lemma}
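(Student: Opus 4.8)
The plan is to prove Lemma~\ref{l:homog} by closely mirroring the classical argument for Lemma~\ref{l:pack}, with the packing dimension replaced throughout by the $\eps$-modified Assouad dimension $\dim^{\eps}_{MA}$. The two key structural facts I rely on are that $\dim^{\eps}_{MA}$ is defined as a countable-stable "modified" version of $\dim^{\eps}_A$ (so $\dim^{\eps}_{MA} X = \inf\{\sup_i \dim^{\eps}_A X_i : X = \bigcup_i X_i\}$), and that $\dim^{\eps}_A$ is monotone under inclusion. The first property gives countable stability of $\dim^{\eps}_{MA}$ and makes part~\eqref{eq:homog2} essentially a Baire-category statement; the second lets me compare covers of open balls.

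For part~\eqref{eq:homog1}, I would argue by contradiction. Suppose $\dim^{\eps}_{MA} X < s$. Then by definition there is a countable cover $X = \bigcup_{i=1}^\infty X_i$ with $\sup_i \dim^{\eps}_A X_i < s$. Passing to closures does not decrease $\dim^{\eps}_A$ in a way that matters here — since $\dim^{\eps}_A \cl(X_i) = \dim^{\eps}_A X_i$ (the $\eps$-Assouad dimension is determined by covering numbers, which are insensitive to closure in a totally bounded space), we may assume each $X_i$ is closed. Now $X$ is compact, hence a Baire space, so by the Baire category theorem some $\cl(X_i)$ has nonempty interior in $X$; call an interior point $x$ and pick a nonempty open $U \subset X_i$. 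Then monotonicity gives $\dim^{\eps}_A U \le \dim^{\eps}_A X_i < s$, contradicting the hypothesis that $\dim^{\eps}_A U \ge s$ for every nonempty open $U$. Hence $\dim^{\eps}_{MA} X \ge s$.

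For part~\eqref{eq:homog2}, I would define the "bad" set
\[
C = \{x \in X : \dim^{\eps}_{MA}(V \cap X) > s \textrm{ for every open } V \ni x\},
\]
i.e.\ the set of points at which $X$ has $\eps$-modified Assouad dimension exceeding $s$ locally. The plan is to show $C$ is closed, nonempty, and self-witnessing. Closedness is immediate from the definition (the complement is open: if some open $V \ni x$ has $\dim^{\eps}_{MA}(V \cap X) \le s$, the same $V$ works for every point of $V$). For nonemptiness and the local property, I would use countable stability: let $\{V_j\}$ be a countable base for the topology of $X$, let $G = \bigcup\{V_j : \dim^{\eps}_{MA}(V_j \cap X) \le s\}$, and observe by countable stability that $\dim^{\eps}_{MA} G \le s$, whence $X \setminus G \ne \emptyset$ because $\dim^{\eps}_{MA} X > s$. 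One checks that $C = X \setminus G$ and that for every open $U$ meeting $C$, the set $C \cap U$ again has $\dim^{\eps}_{MA}(C \cap U) > s$ (otherwise $C \cap U$ could be covered by basic open sets of dimension $\le s$, forcing points of $C \cap U$ into $G$).

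The main obstacle I anticipate is purely bookkeeping rather than conceptual: verifying that $\dim^{\eps}_A$ behaves well under closure and that countable stability of $\dim^{\eps}_{MA}$ passes correctly through the restriction to open subsets $V \cap X$. Concretely, in part~\eqref{eq:homog2} I must ensure that restricting to $C$ (rather than to all of $X$) does not lose the dimension — that is, that $\dim^{\eps}_{MA}(C \cap U) > s$ rather than merely $\dim^{\eps}_{MA}(U \cap X) > s$. The standard resolution, exactly as in Lemma~\ref{l:pack}\eqref{eq:pack2}, is to note that $U \cap X = (U \cap C) \cup (U \setminus C)$, that $U \setminus C \subset G$ has $\dim^{\eps}_{MA} \le s$, and that $\dim^{\eps}_{MA}$ is finitely (indeed countably) stable, so $\dim^{\eps}_{MA}(U \cap X) = \max\{\dim^{\eps}_{MA}(U \cap C), s\}$ forces $\dim^{\eps}_{MA}(U \cap C) > s$. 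Since the paper states this proof is "similar to that of Lemma~\ref{l:pack}," I expect the details to go through with these substitutions and no new ideas.
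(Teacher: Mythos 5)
Your proposal is correct and follows essentially the same route as the paper: part (i) via passing to closures (using that $\dim^{\eps}_A$ is unchanged under closure), the Baire category theorem, and monotonicity; part (ii) via the set $C$ obtained by deleting all basic open sets of $\eps$-modified Assouad dimension at most $s$, together with countable and finite stability of $\dim^{\eps}_{MA}$. The bookkeeping points you flag (closure invariance, and recovering $\dim^{\eps}_{MA}(C\cap U)>s$ from stability) are resolved exactly as you anticipate.
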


\begin{proof} \eqref{eq:homog1} Assume that $X=\bigcup_{i=1}^{\infty} X_i$, where $X_i$ are closed subsets of $X$. Clearly a set and its closure
have the same $\eps$-Assouad dimension, so it is enough to prove that $\dim^{\eps}_{A} X_i\geq s$ for some $i$.
 By Baire's category theorem there is a non-empty open set $U$ in $X$ such that $U\subset X_i$ for some index $i$. Therefore $\dim^{\eps}_A X_i\geq \dim^{\eps}_A U\geq s$, which completes the proof of \eqref{eq:homog1}.

\eqref{eq:homog2} Let $\iV$ be a countable open basis for $X$. Define
\[C=X\setminus \bigcup \{V\in \iV: \dim^{\eps}_{MA} V\leq s\}.\]
Clearly $C$ is closed in $X$ and the $\eps$-modified Assouad dimension is countably stable.
Therefore $\dim^{\eps}_{MA} (X\setminus C)\leq s$.
Let $U\subset X$ be an open set intersecting $C$ and assume to the contrary that $\dim^{\eps}_{MA} (C\cap U)\leq s$.
Then there exists a $V\in \iV$ such that $V\subset U$ and $V\cap C\neq \emptyset$. Using the stability again,
we obtain that
\begin{align*}
\dim^{\eps}_{MA} V&\leq \max\{\dim^{\eps}_{MA} (X\setminus C), \dim^{\eps}_{MA} (C\cap V)\} \\
&\leq \max\{s,\dim^{\eps}_{MA} (C\cap U)\}=s.
\end{align*}
This contradicts the construction of $C$, so the proof of \eqref{eq:homog2} is complete.
\end{proof}

For $D\subset [0,1]$ and $\gamma\in (0,1]$ a function $f\colon D\to \R$ is
called \emph{$\gamma$-H\"older continuous} if there is a finite constant $C$ such that
$|f(x)-f(y)|\leq C|x-y|^{\gamma}$ for all $x,y\in D$. The minimum of such numbers
$C$ is the \emph{H\"older constant} of $f$.

\begin{fact} \label{f:Holder} Let $D\subset [0,1]$ and let
$f\colon D \to \R$ be a $\gamma$-H\"older continuous function for some $\gamma>0$.
Then for all $A\subset D$ we have
\[\dim_H f(A)\leq (1/\gamma) \dim_H A \quad \textrm{and} \quad \dim_P f(A)\leq (1/\gamma) \dim_H A.\]
\end{fact}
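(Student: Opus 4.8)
The plan is to prove the two asserted inequalities separately. The first, $\dim_H f(A)\le(1/\gamma)\dim_H A$, is the classical H\"older estimate and I would obtain it directly from the Hausdorff content. Fix $s>\dim_H A$, so that $\mathcal{H}^{s}_{\infty}(A)=0$, and for $\delta>0$ choose a cover $A\subset\bigcup_i U_i$ with $\sum_i(\diam U_i)^{s}<\delta$. If $C$ is the H\"older constant of $f$, then $\diam f(A\cap U_i)\le C(\diam U_i)^{\gamma}$, so $\{f(A\cap U_i)\}_i$ covers $f(A)$ and $\sum_i(\diam f(A\cap U_i))^{s/\gamma}\le C^{s/\gamma}\sum_i(\diam U_i)^{s}<C^{s/\gamma}\delta$. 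Letting $\delta\to0$ gives $\mathcal{H}^{s/\gamma}_{\infty}(f(A))=0$, hence $\dim_H f(A)\le s/\gamma$, and letting $s\downarrow\dim_H A$ finishes this part.

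Before proving the second inequality I would test it on the identity map, and here I expect the main difficulty. Taking $D=[0,1]$, $f=\id$ and $\gamma=1$ reduces the displayed bound to $\dim_P A\le\dim_H A$, which is false whenever $\dim_H A<\dim_P A$. Such sets exist in $[0,1]$: choosing $S\subset\N^{+}$ as a union of consecutive blocks of rapidly growing lengths on which digits alternate between ``free'' and ``fixed'', the digit-restriction set $D_S$ satisfies $\dim_H D_S=\liminf_n|S\cap\{1,\dots,n\}|/n=0$, while every nonempty relatively open $U\subset D_S$ contains a cylinder and hence has $\overline{\dim}_M U=\limsup_n|S\cap\{1,\dots,n\}|/n=1$; the first part of Lemma~\ref{l:pack} then gives $\dim_P D_S=1$. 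Consequently the second inequality cannot hold as written, and the right-hand side must read $(1/\gamma)\dim_P A$ --- exactly the bound $\dim_P B(A)\le(1/\alpha)\dim_P A$ already recorded in the introduction.

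I would then establish the intended estimate $\dim_P f(A)\le(1/\gamma)\dim_P A$ from the definition of packing dimension as the modified upper Minkowski dimension. The local ingredient is that a $\gamma$-H\"older map raises upper Minkowski dimension by at most the factor $1/\gamma$: a cover of $E$ by $N_r(E)$ sets of diameter $r$ maps to a cover of $f(E)$ by the same number of sets of diameter $\le Cr^{\gamma}$, so $N_{2Cr^{\gamma}}(f(E))\le N_r(E)$ and, taking $\limsup$ as $r\to0+$ (the constant $2C$ being irrelevant to the limit), $\overline{\dim}_M f(E)\le(1/\gamma)\overline{\dim}_M E$. Given any countable decomposition $A=\bigcup_i A_i$, the sets $f(A_i)$ decompose $f(A)$, so $\dim_P f(A)\le\sup_i\overline{\dim}_M f(A_i)\le(1/\gamma)\sup_i\overline{\dim}_M A_i$; taking the infimum over all such decompositions yields $\dim_P f(A)\le(1/\gamma)\dim_P A$. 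Thus both inequalities hold once the second is read with $\dim_P A$ on the right; the only genuine obstacle is that the displayed second inequality, taken literally, is contradicted by the identity map and must be a typographical slip for $\dim_P A$.
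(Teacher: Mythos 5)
Your proposal is correct, and its most valuable part is the diagnosis of the statement itself. The paper offers no proof of Fact~\ref{f:Holder} (it is recorded as a classical fact), so there is nothing to compare step by step; your Hausdorff-content estimate for the first inequality and, for the packing bound, the covering-number estimate $N_{2Cr^{\gamma}}(f(E))\le N_r(E)$ fed into the countable-decomposition definition of $\dim_P$ are exactly the standard arguments the authors are implicitly invoking. You are also right that the second inequality as printed, $\dim_P f(A)\le (1/\gamma)\dim_H A$, is false as stated: the identity map on a set with $\dim_H A<\dim_P A$ refutes it, and such sets exist by the paper's own Fact~\ref{f:d} (take $S$ with $\liminf_n d_n(S)=0$ and $\limsup_n d_n(S)=1$; your appeal to Lemma~\ref{l:pack}~(i) is correct but not even needed, since Fact~\ref{f:d} gives $\dim_P D_S=\limsup_n d_n(S)$ directly). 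That the intended right-hand side is $(1/\gamma)\dim_P A$ is confirmed by internal usage: both the introduction and the proof of Theorem~\ref{t:main} cite this fact to conclude precisely $\dim_P B(A)\le (1/\alpha)\dim_P A$, so this is a typographical slip rather than a substantive claim. Two minor points on your write-up, neither a gap: in the Minkowski estimate, passing from the family of scales $\rho=2Cr^{\gamma}$ to all $\rho\to 0+$ is legitimate because $r\mapsto 2Cr^{\gamma}$ maps $(0,\infty)$ onto itself (your remark that the constant is irrelevant is doing this work); and in the Hausdorff part, $f(A\cap U_i)$ is well defined even though $f$ lives only on $D$, since $A\subset D$ --- your phrasing already handles this correctly.
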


For $0<\alpha<1$ the process $\{B(t):t\geq 0\}$ is called a
\emph{fractional Brownian motion of Hurst index $\alpha$} if
\begin{itemize}
\item $B$ is a Gaussian process with stationary increments;
\item $B(0)=0$ and $t^{-\alpha}B(t)$ has standard normal distribution for every $t>0$;
\item almost surely, the function $t\mapsto B(t)$ is continuous.
\end{itemize}
The covariance function of $B$ is $\E(B(t)B(s))=(1/2)(|t|^{2\alpha}+|s|^{2\alpha}-|t-s|^{2\alpha})$.
It is well known that almost surely $B$ is $\gamma$-H\"older continuous for
all $\gamma<\alpha$, see Lemma~\ref{l:ub} below. For more information on
fractional Brownian motion see \cite{Ad}.

Let $|A|$ denote the cardinality of a set $A$.

\section{A key theorem} \label{s:heart}

The main goal of this section is to prove Theorem~\ref{t:heart}.
First we need some definitions. Assume that $0<\alpha<1$ is fixed and $\{B(t): t\in [0,1]\}$
is a fractional Brownian motion of Hurst index $\alpha$.

\begin{definition}  For $n\in \N$ and $p\in \{0,\dots, 2^n-1\}$ a \emph{dyadic time interval of order $n$} is of the form
\[ I_{n,p} = [p 2^{-n}, (p+1)2^{-n}].\]
For all $n\in \N$ let
\[\iI_n=\{I_{n,p}: 0\leq p<2^n\} \quad \textrm{and} \quad \iI^{n}=\bigcup_{k=n}^{\infty} \iI_k\]
be the set of dyadic time intervals of order exactly $n$ and at least $n$, respectively. Let
\[\iI=\bigcup_{n=0}^{\infty} \iI_n\]
denote the set of all dyadic time intervals in $[0,1]$. For $q\in \Z$ a \emph{value interval of order $n$} is of the form
\[
J_{n,q} = [q 2^{-\alpha n}, (q+1) 2^{-\alpha n}].
\]
For all $n\in \N$ let
\[\iJ_n=\{J_{n,q}: q\in \Z\}\]
be the set of value intervals of order $n$.
\end{definition}

\begin{definition} Let $\iU\subset \iI$ be a set of dyadic time intervals.
For all $I\in \iI$ and $n\in \N$ define \[N_n(\iU,I)=|\{U\in \iU\cap \iI_n: U\subset I\}|.\]
For $m,n\in \N$ with $m<n$ let
\[N_{m,n}(\iU)=\max\{N_n(\iU,I): I\in \iI_m\}.\]
Let $\beta>0$ and $\eps \geq 0$, then $\iU$ is called \emph{$(\beta,\eps)$-balanced} if for all $m\leq (1-\eps)n$ we have
\[N_{m,n}(\iU)\leq 2^{\beta(n-m)}.\]
We say that $\iU$ is \emph{$\beta$-balanced} if it is $(\beta,0)$-balanced.
For all $n\in \N$ and $q\in \Z$ define
\[G_{n,q}(\iU)=|\{U\in \iU\cap \iI_n: B(U)\cap J_{n,q}\neq \emptyset\}|.\]
For $\eps>0$ define the event
\[\Gamma(\iU,\eps)=\{G_{n,q}(\iU)\leq 2^{\eps n} \textrm{ for all } n\in \N \textrm{ and } q\in \Z\}.\]
\end{definition}

\begin{theorem} \label{t:heart} Let $0<\alpha<1$ and let $\{B(t): t\in [0,1]\}$ be a fractional Brownian motion of Hurst index $\alpha$.
Let $\eps>0$ and assume that $\iU_k\subset \iI^k$ are $(\alpha+\eps,\eps)$-balanced for all large enough $k$. Then, almost surely,
$\Gamma(\iU_k,3\eps)$ holds for all $k$ large enough.
\end{theorem}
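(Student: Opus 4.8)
The plan is to bound, for each fixed $n$ and $q$, the factorial moments of the counting variable $G_{n,q}(\iU_k)$, convert a super‑exponential tail estimate into an almost sure statement by Borel--Cantelli, and use the $\eps$ slack to absorb both a uniform oscillation reduction and a union bound over $q$. First I would reduce the geometric hitting event to a one‑dimensional value‑window event. By Lemma~\ref{l:ub}, almost surely $B$ obeys the modulus of an $(\alpha-\eps)$‑H\"older function, so on a full‑measure event all but finitely many scales $n$ satisfy $\osc_U B\le 2^{-(\alpha-\eps)n}$ simultaneously for every $U\in\iI_n$ (the complementary events are summable by the Gaussian tail of the oscillation). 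On this event, if $U\in\iI_n$ has $B(U)\cap J_{n,q}\neq\emptyset$ then its left endpoint $t_U$ satisfies $|B(t_U)-c_q|\le 2\cdot 2^{-(\alpha-\eps)n}$, where $c_q$ is the midpoint of $J_{n,q}$. Hence it suffices to bound the cleaner variable
\[
\widetilde G_{n,q}=\bigl|\{U\in\iU_k\cap\iI_n:\ |B(t_U)-c_q|\le 2\cdot 2^{-(\alpha-\eps)n}\}\bigr|,
\]
which involves only the marginals $B(t_U)$ and no longer the oscillation.

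The heart of the argument is a factorial‑moment bound obtained from the strong local nondeterminism of fractional Brownian motion. For distinct order‑$n$ intervals $U_1,\dots,U_N\in\iU_k$ with left endpoints $t_1,\dots,t_N$, local nondeterminism bounds the joint density of $(B(t_1),\dots,B(t_N))$ by $\prod_i C\,d_i^{-\alpha}$, where $d_i$ is the distance from $t_i$ to the nearest of the remaining $t_j$. Integrating over the product of the value windows of length $\sim 2^{-(\alpha-\eps)n}$ gives
\[
\P\bigl(|B(t_i)-c_q|\le 2\cdot 2^{-(\alpha-\eps)n}\ \text{for all } i\bigr)\le \prod_{i=1}^{N} C\,\bigl(2^{-(\alpha-\eps)n}\bigr)\, d_i^{-\alpha}.
\]
Summing over distinct tuples one coordinate at a time reduces everything to the marginal estimate
\[
\sum_{U\in\iU_k\cap\iI_n} \bigl(2^{-(\alpha-\eps)n}\bigr)\,\dist(t_U,x_0)^{-\alpha}\le C\,2^{2\eps n},
\]
uniformly in the reference point $x_0$. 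This is exactly where the hypothesis that $\iU_k$ is $(\alpha+\eps,\eps)$‑balanced enters: grouping the $U\in\iU_k$ by distance $\sim 2^{-j}$ from $x_0$, an annulus at distance $2^{-j}$ meets $O(1)$ dyadic intervals of order $j$, each containing at most $2^{(\alpha+\eps)(n-j)}$ members of $\iU_k$ as long as $j\le(1-\eps)n$; its contribution is $2^{(\alpha+\eps)(n-j)}\cdot 2^{-(\alpha-\eps)n}\cdot 2^{j\alpha}=2^{2\eps n-\eps j}$, and the geometric sum over $j$ is dominated by $j=0$, while the close scales $j>(1-\eps)n$ contribute a comparable term handled by the trivial count. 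Combining, $\E\binom{\widetilde G_{n,q}}{N}=\tfrac1{N!}\sum_{\text{distinct }U_1,\dots,U_N}\P(\cdots)\le (C2^{2\eps n})^N/N!$.

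Finally I would take $N=\lceil 2^{3\eps n}\rceil$ and apply Markov's inequality to $\binom{\widetilde G_{n,q}}{N}$: by $N!\ge (N/e)^N$ this yields $\P(\widetilde G_{n,q}\ge N)\le (eC2^{2\eps n}/N)^N\le (eC\,2^{-\eps n})^N$, super‑exponentially small in $n$. Since the range of $B$ on $[0,1]$ is almost surely bounded, only $O(2^{\alpha n})$ values of $q$ are relevant at scale $n$ (the tail in $q$ is controlled by the Gaussian bound on $\sup_{[0,1]}|B|$), so a union bound over $q$, over the scales $n\ge k$, and over the countably many $k$ is swamped by the super‑exponential decay. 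Borel--Cantelli then gives that almost surely $\widetilde G_{n,q}\le 2^{3\eps n}$, hence $G_{n,q}(\iU_k)\le 2^{3\eps n}$, for all $q$, all large $n$, and all large $k$, which is the event $\Gamma(\iU_k,3\eps)$.

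The main obstacle I anticipate is making the joint‑density/local‑nondeterminism step precise: choosing the representative times, stating the nearest‑neighbour density bound with correct constants, and verifying that the iterated marginal sum genuinely collapses to $2^{2\eps n}$ under $(\alpha+\eps,\eps)$‑balancedness (including the bookkeeping for the close scales $j>(1-\eps)n$ and for the coordinate near the origin, where the density of $B(t)$ degenerates). The reduction through the oscillation modulus is the secondary technical point, and it is precisely the $\eps$ lost there together with the $\eps$ produced by the marginal sum that accounts for the factor $3\eps$ in the statement.
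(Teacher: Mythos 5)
Your proposal is correct in outline, but the concentration step is genuinely different from the paper's. Both arguments share the same skeleton: reduce the event $B(U)\cap J_{n,q}\neq\emptyset$ to a statement about the single value $B(\min U)$ via the uniform modulus of continuity (Lemma~\ref{l:ub}), use strong local nondeterminism to bound a one-point density, and observe that $(\alpha+\eps,\eps)$-balancedness makes the resulting sum over $U\in\iU_k\cap\iI_n$, grouped by dyadic distance to a reference point, collapse to $O(2^{2\eps n})$ --- your annulus computation is exactly the paper's estimate in Lemma~\ref{l:sln}, including the split at $j=(1-\eps)n$ where the trivial count takes over. Where you diverge is in upgrading this first-moment bound to a tail bound. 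The paper never touches joint densities: it conditions on $\iF_\tau$ for stopping times $\tau$, shows $\E(P^{\tau}_{n,q}\,|\,\iF_\tau)\leq c2^{\eps n}$, and iterates a restart argument ($\ell$ stopping times, each successive block of $2c2^{\eps n}$ hits occurring with conditional probability at most $1/2$) to get $\P(P_{n,q}\geq C\ell 2^{\eps n})\leq 2^{-\ell}$ with $\ell=n^2$. You instead bound the $N$-th factorial moment via the joint Gaussian density, in the classical Kaufman style, and apply Markov at $N\approx 2^{3\eps n}$. Your route gives stronger (super-exponential) tails and is self-contained once the multi-point density bound is in place, but that bound is where the real work hides: you should state it with the \emph{ordered} gaps $t_{(i)}-t_{(i-1)}$ (so that $\det\Sigma=\prod_i\Var(B(t_{(i)})\mid B(t_{(j)}),j<i)\geq\prod_i c(t_{(i)}-t_{(i-1)})^{2\alpha}$ by one-sided local nondeterminism), since the nearest-neighbour form you quote, while valid, does not let the sum over distinct tuples telescope one coordinate at a time --- deleting a point perturbs the nearest-neighbour distances of its two neighbours. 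With the ordered-gap form the nested summation reduces cleanly to $\sup_{s}\sum_{t_U>s}(\cdots)\leq C2^{2\eps n}$, and the single interval with $\min U=0$ (where the density degenerates) contributes at most $1$ and can be discarded. The paper's stopping-time method buys you freedom from all of this multi-point bookkeeping at the cost of a weaker (but amply sufficient) tail $2^{-n^2}$; your method is closer to the original dimension-doubling proofs and would generalize more readily to settings where exponential moments of the count are needed.
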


Before proving the theorem we need some preparation.

\begin{definition} Let $\iU\subset \iI$.
For all $n\in \N$ and $q\in \Z$ define \[P_{n,q}(\iU)=|\{U\in \iU\cap \iI_n: B(\min U)\in J_{n,q}\}|.\]
For $\eps>0$ define the event
\[\Pi(\iU,\eps)=\{P_{n,q}(\iU)\leq 2^{\eps n} \textrm{ for all } n\in \N \textrm{ and } |q|\leq n2^{\alpha n}\}.\]
%%%%
Assume that our fractional Brownian motion $B$ is defined on the probability space $(\Omega, \iF, \P)$,
and let $\iF_t=\sigma(B(s): 0\leq s\leq t)$ be the natural filtration. For a stopping time $\tau\colon \Omega \to [0,\infty]$
define the $\sigma$-algebra
\[\iF_{\tau}=\{A\in \iF: A\cap \{\tau\leq t\}\in \iF_t \textrm{ for all } t\geq 0\}.\]
For all $n\in \N$, $q\in \Z$, and for all stopping times $\tau$ let
\[P_{n,q}^{\tau}(\iU)=|\{U\in \iU\cap \iI_n: \min U>\tau  \textrm{ and } B(\min U)\in J_{n,q}\}|.\]
\end{definition}

\begin{lemma} \label{l:sln} Let $\iU\subset \iI$ be $(\alpha+\eps,\eps)$-balanced for some $\eps>0$.
Then there is a finite constant $c=c(\alpha,\eps)$ such that for all bounded
stopping times $\tau$ and integers $n\in \N$ and $q\in \Z$, almost surely, we have
\[\E(P_{n,q}^{\tau}(\iU) \, | \, \iF_{\tau})\leq c 2^{\eps n}.\]
\end{lemma}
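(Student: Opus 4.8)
The plan is to reduce the bound to a single-interval conditional probability estimate and then sum over $U$ using the balanced hypothesis. Because $\min U$ is deterministic, each event $\{\min U>\tau\}$ lies in $\iF_\tau$, so linearity of conditional expectation gives, almost surely,
\[
\E\bigl(P^{\tau}_{n,q}(\iU)\,\big|\,\iF_\tau\bigr)=\sum_{U\in\iU\cap\iI_n}\mathbf 1_{\{\min U>\tau\}}\,\P\bigl(B(\min U)\in J_{n,q}\,\big|\,\iF_\tau\bigr).
\]
The analytic input I would use is a strong local nondeterminism estimate: on $\{\min U>\tau\}$ the conditional law of $B(\min U)$ given $\iF_\tau$ has a density bounded by $C(\min U-\tau)^{-\alpha}$ with $C=C(\alpha)$. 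As $J_{n,q}$ has length $2^{-\alpha n}$, this yields on $\{\min U>\tau\}$
\[
\P\bigl(B(\min U)\in J_{n,q}\,\big|\,\iF_\tau\bigr)\le\min\bigl\{1,\ C\,2^{-\alpha n}(\min U-\tau)^{-\alpha}\bigr\}.
\]

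For the summation I would split the intervals by the dyadic scale of their distance to $\tau$, calling $U$ \emph{far} when $\min U-\tau>2^{-(1-\eps)n}$ and \emph{near} otherwise. Group the far intervals according to the integer $m$ with $\min U-\tau\in(2^{-m-1},2^{-m}]$, so $0\le m\le(1-\eps)n$; their left endpoints lie in a window of length $2^{-m-1}$, which meets at most two order-$m$ dyadic intervals, and since $m\le(1-\eps)n$ the $(\alpha+\eps,\eps)$-balanced hypothesis bounds the number of $U\in\iU\cap\iI_n$ inside each such interval by $2^{(\alpha+\eps)(n-m)}$. Since the density term is at most $C2^{\alpha}2^{-\alpha(n-m)}$ on this scale, the contribution from scale $m$ is $O\bigl(2^{\eps(n-m)}\bigr)$, and summing this geometric series over $m$ gives $O(2^{\eps n})$. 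For the near intervals the left endpoints lie in a window of length $2^{-(1-\eps)n}$ above $\tau$, which contains at most $2^{\eps n}+1$ points of $\iI_n$; bounding each probability trivially by $1$ contributes at most $2^{\eps n}+1$. Adding the two parts produces the desired bound $c(\alpha,\eps)\,2^{\eps n}$.

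The main obstacle is establishing the conditional density bound for a stopping time rather than a fixed time. For a deterministic $s<t$ it is classical strong local nondeterminism of fractional Brownian motion: the law of $B(t)$ given $\{B(u):u\le s\}$ is Gaussian with $\Var(B(t)\,|\,\iF_s)\ge c_0(t-s)^{2\alpha}$, whence its density is at most $C(t-s)^{-\alpha}$. To pass to a bounded stopping time I would approximate $\tau$ from above by the dyadic stopping times $\tau_j=\lceil 2^j\tau\rceil\,2^{-j}\downarrow\tau$, each taking countably many values. On each level set $\{\tau_j=s\}$ the conditional expectation given $\iF_{\tau_j}$ coincides with that given $\iF_s$, so the deterministic estimate gives $\P(B(\min U)\in J_{n,q}\,|\,\iF_{\tau_j})\le C2^{-\alpha n}(\min U-\tau_j)^{-\alpha}$ on $\{\tau_j<\min U\}$. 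Working with the augmented right-continuous filtration, reverse martingale convergence gives $\E(\,\cdot\,|\,\iF_{\tau_j})\to\E(\,\cdot\,|\,\iF_\tau)$ almost surely as $\iF_{\tau_j}\downarrow\iF_\tau$; since $\tau_j\downarrow\tau<\min U$ on $\{\min U>\tau\}$, the right-hand side converges to $C2^{-\alpha n}(\min U-\tau)^{-\alpha}$, and passing to the limit yields the per-interval bound on which the whole argument rests.
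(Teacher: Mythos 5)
Your proof is correct and follows essentially the same route as the paper's: a conditional density bound coming from strong local nondeterminism, a decomposition of the intervals by the dyadic scale $m$ of $\min U-\tau$, the $(\alpha+\eps,\eps)$-balanced hypothesis for scales $m\le(1-\eps)n$, and trivial counting in the near window, summed to a geometric series plus $O(2^{\eps n})$. The only difference is that you justify the stopping-time form of the density bound by dyadic approximation of $\tau$ and downward martingale convergence, whereas the paper invokes Pitt's strong local nondeterminism directly in the stopping-time form $\Var(B(\tau+t)\,|\,\iF_{\tau})\geq c_1 t^{2\alpha}$.
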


\begin{proof} We may assume that $\tau$ takes values from $2^{-n} \mathbb{N}$. Pitt \cite[Lemma~7.1]{P} proved that the property of
\emph{strong local nondeterminism} holds for fractional Brownian motion, that is, there is a constant
$c_1=c_1(\alpha)>0$ such that for all $t\geq 0$, almost surely, we have
\begin{equation} \label{eq:tau} \Var(B(\tau+t) \, | \, \iF_{\tau})\geq c_1 t^{2\alpha}.\end{equation}
Let us fix $t\in [2^{-m},2^{-m+1}]$ for some $m\in \N^+$. As $B$ is Gaussian, almost surely the conditional distribution $B(\tau+t)\,|\,\iF_{\tau}$ is normal,
and \eqref{eq:tau} implies that its density function is bounded by $1/(\sqrt{c_1}t^{\alpha})$. Therefore, almost surely, we have
\begin{equation} \label{eq:tau2}
\P(B(\tau+t)\in J_{n,q}  \, | \, \iF_{\tau})\leq \int_{q2^{-\alpha n}}^{(q+1)2^{-\alpha n}}
\frac{1}{\sqrt{c_1}t^{\alpha}} \,\mathrm{d} x=c_2(t2^{n})^{-\alpha}\leq c_2 2^{\alpha(m-n)},
\end{equation}
where $c_2=1/\sqrt{c_1}$.
Fix $n\in \N^+$ and for all $1\leq m\leq n$ define
\[\iX_m=\{\min U: U\in \iU\cap \iI_n \textrm{ and } U\subset [\tau+2^{-m},\tau+2^{-m+1}]\},\]
and let $X_m$ be the contribution of $B|_{\iX_m}$ to
$P_{n,q}^{\tau}(\iU)$. Clearly for all $m$ we have
\begin{equation} \label{eq:2nm}
\E(X_m \, | \, \iF_{\tau})\leq |\iX_m|\leq 2^{n-m}.
 \end{equation}
Since $[\tau+2^{-m},\tau+2^{-m+1}]\cap [0,1]$ can be always covered by two intervals of $\iI_m$ and $\iU$ is $(\alpha+\eps,\eps)$-balanced,
for all $m\leq (1-\eps)n$ we obtain that
\begin{equation} \label{eq:iY} |\iX_m|\leq 2^{(\alpha+\eps)(n-m)+1}.
\end{equation}
Applying \eqref{eq:tau2} and \eqref{eq:iY} yields that for all $m\leq (1-\eps)n$ we have
\begin{equation*} \E(X_m \, | \, \iF_{\tau})=\sum_{s\in \iX_m} \P(B(s)\in J_{n,q}  \, | \, \iF_{\tau})\leq 2^{(\alpha+\eps)(n-m)+1} c_2 2^{\alpha(m-n)}=c_3 2^{\eps (n-m)},
\end{equation*}
where $c_3=2c_2$. Thus the above inequality and \eqref{eq:2nm} imply that
\begin{align*} \E(P_{n,q}^{\tau}(\iU) \, | \, \iF_{\tau})&=\sum_{m=1}^n \E(X_m \, | \, \iF_{\tau}) \\
&\leq \sum_{1\leq m\leq (1-\eps)n} c_3 2^{\eps (n-m)}+\sum_{(1-\eps)n< m\leq n} 2^{n-m} \\
&\leq \frac{c_{3}}{2^{\eps}-1} 2^{\eps n}+2^{\eps n+1}\leq c 2^{\eps n}
\end{align*}
for some finite constant $c=c(\alpha,\eps)$. The proof is complete.
\end{proof}

\begin{lemma} \label{l:Snmpq} Let $\iU\subset \iI$ be $(\alpha+\eps,\eps)$-balanced for some $\eps>0$.
Then there is a finite constant $C=C(\alpha,\eps)$ such
that for all $n,\ell \in \N$ and $q\in \Z$ we have
\[\P(P_{n,q}(\iU)\geq  C \ell 2^{\eps n})\leq 2^{-\ell}.\]
\end{lemma}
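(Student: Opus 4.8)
The plan is to upgrade the first-moment bound of Lemma~\ref{l:sln} into an exponential tail estimate by a successive-conditioning (stopping time) argument, exploiting the crucial feature that Lemma~\ref{l:sln} controls the conditional expectation \emph{after every bounded stopping time}, not merely at time $0$. Heuristically, each time we have accumulated a fresh batch of $m\approx 2^{\eps n}$ points $U$ with $B(\min U)\in J_{n,q}$, the strong local nondeterminism makes it a fair coin flip whether a further batch appears, so the number of batches is stochastically dominated by a geometric variable.

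First, fix $n$ and $q$. Since $\iU\cap\iI_n$ is finite (it has at most $2^n$ elements), $P_{n,q}(\iU)$ is a finite random variable, and I list the left endpoints $\min U$ of the intervals $U\in\iU\cap\iI_n$ in increasing order. Call $U$ a \emph{hit} if $B(\min U)\in J_{n,q}$, so $P_{n,q}(\iU)$ is the total number of hits and $P^{\tau}_{n,q}(\iU)$ is the number of hits with $\min U>\tau$. Set $m=\lceil 2c\,2^{\eps n}\rceil$, where $c=c(\alpha,\eps)$ is the constant from Lemma~\ref{l:sln}. Define $\tau_0=0$ and, for $j\ge 1$, let $\tau_j$ be the left endpoint of the $(jm)$-th hit if at least $jm$ hits occur, and $\tau_j=+\infty$ otherwise. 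Because the endpoints are deterministic and each event ``$B(\min U)\in J_{n,q}$'' is $\iF_{\min U}$-measurable, every $\{\tau_j\le t\}$ is $\iF_t$-measurable, so the $\tau_j$ are stopping times; moreover $\{\tau_j<\infty\}=\{P_{n,q}(\iU)\ge jm\}$.

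The heart of the argument is a one-step contraction. On the event $\{\tau_{j-1}<\infty\}$ the $(j-1)m$-th hit sits exactly at $\tau_{j-1}$, so reaching the $jm$-th hit requires at least $m$ further hits with left endpoint $>\tau_{j-1}$; that is,
$$\{\tau_j<\infty\}\cap\{\tau_{j-1}<\infty\}=\{P^{\tau_{j-1}}_{n,q}(\iU)\ge m\}\cap\{\tau_{j-1}<\infty\}.$$
Since every $U\subset[0,1]$ forces $\min U\le 1$, on $\{\tau_{j-1}<\infty\}$ we have $\tau_{j-1}\le 1$, so replacing $\tau_{j-1}$ by the bounded stopping time $\tau_{j-1}\wedge 2$ changes nothing there and lets me invoke Lemma~\ref{l:sln}. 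Applying the conditional Markov inequality to $\E(P^{\tau_{j-1}}_{n,q}(\iU)\mid\iF_{\tau_{j-1}})\le c\,2^{\eps n}$ and using $m\ge 2c\,2^{\eps n}$ gives, almost surely on $\{\tau_{j-1}<\infty\}$,
$$\P(\tau_j<\infty\mid\iF_{\tau_{j-1}})\le \frac{c\,2^{\eps n}}{m}\le\frac12.$$

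Taking expectations yields $\P(\tau_j<\infty)\le\tfrac12\,\P(\tau_{j-1}<\infty)$, and iterating from $\P(\tau_0<\infty)=1$ gives $\P(P_{n,q}(\iU)\ge \ell m)=\P(\tau_\ell<\infty)\le 2^{-\ell}$. Finally, since $2^{\eps n}\ge 1$ we have $\ell m=\ell\lceil 2c\,2^{\eps n}\rceil\le (2c+1)\ell\,2^{\eps n}$, so with $C=2c+1$ the event $\{P_{n,q}(\iU)\ge C\ell 2^{\eps n}\}$ is contained in $\{P_{n,q}(\iU)\ge\ell m\}$, which yields the claimed bound. I expect the only delicate point to be the bookkeeping that makes the $\tau_j$ legitimate bounded stopping times and that identifies $\{\tau_j<\infty\}\cap\{\tau_{j-1}<\infty\}$ with the conditional count $\{P^{\tau_{j-1}}_{n,q}(\iU)\ge m\}$, so that Lemma~\ref{l:sln} may be applied cleanly after each stopping time; everything beyond that is the geometric iteration above.
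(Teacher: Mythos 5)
Your proof is correct and follows essentially the same route as the paper: stopping times marking the accumulation of successive batches of roughly $2c\,2^{\eps n}$ hits, a conditional Markov inequality via Lemma~\ref{l:sln} giving a one-step contraction by $1/2$, and geometric iteration. The only differences are cosmetic (the paper sets the stopping time to $1$ rather than $+\infty$ when no further batch occurs, avoiding your truncation step, and obtains $C=3c$ instead of $C=2c+1$).
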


\begin{proof}  Let $c=c(\alpha, \eps)$ be the finite constant in Lemma~\ref{l:sln}, clearly we may assume that $c\geq 1$.
We will show that $C=3c$ satisfies the lemma. We define stopping times $\tau_0,\dots, \tau_\ell$. Let $\tau_0=0$.
If $\tau_{k}$ is defined for some $0\leq k<\ell$ then
let $\tau_{k+1}$ be the first time such that $P_{n,q}^{\tau_k}(\mathcal{U})-P_{n,q}^{\tau_{k+1}}(\mathcal{U})\geq 2c 2^{\varepsilon n}$ if such a time exists, otherwise let $\tau_{k+1}=1$.
Then $c\geq 1$ and the definition of stopping times yield that
%%%
\begin{align*} \P(P_{n,q}(\iU)\geq  3\ell c 2^{\eps n})&\leq \P(P_{n,q}(\iU)\geq  \ell(2c 2^{\eps n}+1)) \\
&\leq \P(\tau_\ell<1)=\prod_{k=1}^{\ell} \P(\tau_{k}<1 \,|\, \tau_{k-1}<1).
\end{align*}
%%%
We may suppose that $\P(\tau_\ell<1)>0$ and thus the above conditional probabilities are defined,
otherwise we are done. Hence it is enough to show that for all $1\leq k\leq \ell$ we have
\begin{equation} \label{eq:1/2} \P(\tau_{k}<1 \,|\, \tau_{k-1}<1)\leq 1/2.
\end{equation}
Using Lemma~\ref{l:sln} and the conditional Markov's inequality, we obtain that, almost surely,
\[\P(\tau_k<1 \,|\, \iF_{\tau_{k-1}})= \P(P_{n,q}^{\tau_{k-1}}(\iU)\geq 2c 2^{\eps n}\, | \, \iF_{\tau_{k-1}})\leq 1/2.\]
As $\{\tau_{k-1}<1\}\in \iF_{\tau_{k-1}}$, the tower property of conditional expectation yields \eqref{eq:1/2}. This completes the proof.
\end{proof}

\begin{lemma}\label{l:dkey} Let $\eps>0$ and assume that $\iU_k\subset \iI^k$ are $(\alpha+\eps,\eps)$-balanced for all large enough $k$.
Then, almost surely, $\Pi(\iU_k,2\eps)$ holds for all $k$ large enough.
\end{lemma}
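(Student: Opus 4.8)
The plan is to run a Borel--Cantelli argument driven entirely by the tail bound of Lemma~\ref{l:Snmpq}. Fix $\eps>0$ and let $C=C(\alpha,\eps)$ be the constant supplied by that lemma; crucially it is uniform over all $(\alpha+\eps,\eps)$-balanced families, so it will not depend on $k$. First I would observe that since $\iU_k\subset \iI^k$, every interval of $\iU_k$ has order at least $k$, and hence $P_{n,q}(\iU_k)=0$ whenever $n<k$. Therefore the event $\Pi(\iU_k,2\eps)$ can only fail through a pair $(n,q)$ with $n\geq k$ and $|q|\leq n2^{\alpha n}$, and it is exactly the restriction $|q|\leq n2^{\alpha n}$ in the definition of $\Pi$ that makes the number of relevant $q$'s finite (at most $2n2^{\alpha n}+1$ of them) so that a union bound is available.

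The heart of the estimate is to exploit the gap between the target exponent $2\eps n$ and the base rate $\eps n$ in Lemma~\ref{l:Snmpq}. Taking $k$ large enough that $2^{\eps k}\geq 2C$, for every $n\geq k$ and every admissible $q$ I would apply that lemma with $\ell=\ell_n:=\lfloor 2^{\eps n}/C\rfloor\geq 2^{\eps n}/(2C)$, chosen so that $C\ell_n 2^{\eps n}\leq 2^{2\eps n}$. This gives
$$\P\bigl(P_{n,q}(\iU_k)\geq 2^{2\eps n}\bigr)\leq \P\bigl(P_{n,q}(\iU_k)\geq C\ell_n 2^{\eps n}\bigr)\leq 2^{-\ell_n}\leq 2^{-2^{\eps n}/(2C)}.$$
A union bound over $n\geq k$ and over the at most $2n2^{\alpha n}+1$ values of $q$ then yields
$$\P\bigl(\Pi(\iU_k,2\eps)^{c}\bigr)\leq \sum_{n\geq k}(2n2^{\alpha n}+1)\,2^{-2^{\eps n}/(2C)}.$$

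The key point is that these terms decay \emph{doubly} exponentially in $n$: the factor $2^{-2^{\eps n}/(2C)}$ overwhelms the merely exponential growth $2^{\alpha n}$, so the series $\sum_{n}(2n2^{\alpha n}+1)2^{-2^{\eps n}/(2C)}$ converges. Hence the tails $\sum_{n\geq k}$ are summable in $k$, and summing the double series gives $\sum_k \P(\Pi(\iU_k,2\eps)^{c})<\infty$. By the Borel--Cantelli lemma, almost surely only finitely many of the events $\Pi(\iU_k,2\eps)^{c}$ occur, which is precisely the statement that $\Pi(\iU_k,2\eps)$ holds for all $k$ large enough. There is no real obstacle here beyond bookkeeping: one must ensure $\ell_n\geq 1$ for all $n\geq k$ (handled by taking $2^{\eps k}\geq 2C$) and note that the finitely many small $k$, and the finitely many $k$ for which $\iU_k$ need not be balanced, are irrelevant to the Borel--Cantelli conclusion. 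All the probabilistic content is carried by Lemma~\ref{l:Snmpq}, whose exponential slack in $\ell$ is exactly what we convert into summability by letting $\ell_n$ grow like $2^{\eps n}$.
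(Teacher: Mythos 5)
Your proposal is correct and follows essentially the same route as the paper: a union bound over $n\geq k$ and $|q|\leq n2^{\alpha n}$, an application of Lemma~\ref{l:Snmpq} with a choice of $\ell$ making $C\ell 2^{\eps n}\leq 2^{2\eps n}$, and Borel--Cantelli. The only difference is the parameter choice ($\ell_n\approx 2^{\eps n}/C$ versus the paper's $\ell=n^2$), and both choices yield summable tails.
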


\begin{proof} We may assume that $\iU_k$ are $(\alpha+\eps,\eps)$-balanced for all $k$.
Let $C=C(\alpha,\eps)$ be the constant in Lemma~\ref{l:Snmpq}. We give an upper bound for the probability of the complement of $\Pi(\iU_k,2\eps)$. We apply Lemma~\ref{l:Snmpq} for all $n\geq k$ and $|q|\leq n2^{\alpha n}$ with $\ell=n^2$. Clearly $\iU_k\subset \iI^k$ implies
that $P_{n,q}(\iU_k)=0$ for all $n<k$ and $q\in \Z$.
As the number of integers $q$ with  $|q|\leq n2^{\alpha n}$ is at most $2n2^{\alpha n}+1<3n2^{\alpha n}$, for all large enough $k$ we obtain
\begin{align*} \P(\Pi^c(\iU_k,2\eps))&=\P(P_{n,q}(\iU_k)>2^{2\eps n} \textrm{ for some } n\geq k \textrm{ and } |q|\leq n2^{\alpha n}) \\
&\leq \sum_{n=k}^{\infty} \sum_{|q|\leq n2^{\alpha n}}  \P(P_{n,q}(\iU_k)>2^{2\eps n})\\
&\leq \sum_{n=k}^{\infty} \sum_{|q|\leq n2^{\alpha n}} \P(P_{n,q}(\iU_k)>Cn^22^{\eps n}) \\
&  \leq \sum_{n=k}^{\infty} (3n2^{\alpha n}) 2^{-n^2}\leq 2^{-k}.
\end{align*}
Thus $\sum_{k=1}^{\infty} \P(\Pi^c(\iU_k,2\eps))<\infty$, so the Borel-Cantelli lemma implies that, almost surely,
$\Pi^c(\iU_k,2\eps)$ holds only for finitely many $k$. This completes the proof.
\end{proof}

For the following well-known lemma see the more general \cite[Corollary~7.2.3]{MR}.

\begin{lemma} \label{l:ub} Almost surely, we have
\[\limsup_{h\to 0+} \sup_{0\leq t\leq 1-h} \frac{|B(t+h)-B(t)|}{\sqrt{2h^{2\alpha} \log(1/h)}}\leq 1.\]
\end{lemma}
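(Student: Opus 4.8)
The plan is to prove only the stated upper bound $\limsup \le 1$, which is a union-bound / Borel--Cantelli statement; the matching lower bound plays no role here. Throughout write $\psi(h) = \sqrt{2 h^{2\alpha}\log(1/h)}$. The single distributional input I rely on is that, by stationarity of increments and the normalization $t^{-\alpha}B(t)\sim N(0,1)$, one has $\Var(B(t+h)-B(t)) = h^{2\alpha}$, so $(B(t+h)-B(t))/h^{\alpha}$ is standard normal and hence $\P(|B(t+h)-B(t)|\ge \lambda h^{\alpha})\le 2e^{-\lambda^2/2}$. Fix $\delta>0$; the goal is to show that almost surely $|B(t+h)-B(t)|\le (1+\delta)\psi(h)$ for all sufficiently small $h$ and all $t\in[0,1-h]$, after which letting $\delta\downarrow 0$ along a sequence finishes the proof.

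First I would establish a crude dyadic modulus at every scale. For each level $m$ there are $2^m$ increments $B((p+1)2^{-m})-B(p2^{-m})$; by the Gaussian tail each exceeds $(1+\delta)\psi(2^{-m})$ with probability at most $2\cdot 2^{-(1+\delta)^2 m}$, and since $(1+\delta)^2>1$ the union bound $2^{m}\cdot 2\cdot 2^{-(1+\delta)^2 m}$ is summable. Borel--Cantelli then gives that, almost surely, every dyadic increment of every large enough level $m$ is at most $(1+\delta)\psi(2^{-m})$. Telescoping along a binary expansion, this yields that for any $t$ and its level-$N$ dyadic floor $t_N$ one has $|B(t)-B(t_N)|\le \sum_{m>N}(1+\delta)\psi(2^{-m})\le C_\alpha\,\psi(2^{-N})$, a geometric tail bounded by a constant times $\psi(2^{-N})$.

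The heart of the argument is the main term, and this is where the sharp constant forces a careful choice of discretization level. For $h\in(2^{-n-1},2^{-n}]$ set $N=\lceil(1+\delta/4)n\rceil$ and approximate $t$ and $t+h$ by their level-$N$ dyadic floors $t',u'$. Then $u'-t' = h+O(2^{-N})$ lies in an interval comparable to $2^{-n}$, and I bound $\P(|B(u')-B(t')|>(1+\delta/2)\psi(u'-t'))$ by the Gaussian tail; summing over the at most $2^{N}\cdot 2^{N-n}=2^{2N-n}$ relevant pairs $(t',u')$ gives a probability of order $2^{2N-n-n(1+\delta/2)^2}$, whose exponent is $\le -n\delta/2<0$ precisely because $N$ was chosen just slightly above $n$; hence this is summable in $n$ and Borel--Cantelli applies. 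The decomposition $B(t+h)-B(t) = [B(u')-B(t')]+[B(t+h)-B(u')]-[B(t)-B(t')]$ then bounds the left side by $(1+\delta/2)\psi(u'-t')$ plus two error terms, each at most $C_\alpha\,\psi(2^{-N})$ by the previous paragraph; since $\psi(2^{-N})/\psi(2^{-n})\approx 2^{-(N-n)\alpha}\sqrt{N/n}\to 0$ and $\psi(u'-t')\le(1+o(1))\psi(h)$, the total is at most $(1+\delta)\psi(h)$ for all large $n$, uniformly in $t$ and in $h$ within the dyadic block.

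The main obstacle is exactly the tension in choosing $N$: the interpolation error from rounding $t,t+h$ to the grid is controlled by $\psi(2^{-N})$ and so demands $N\gg n$, whereas the union bound over grid-point pairs costs a factor $2^{2N-n}$ and so demands $N$ not much larger than $n$. The sharp constant $1$ survives only because both constraints are simultaneously met by $N=(1+\Theta(\delta))n$, for which the error is negligible relative to the leading $\psi(2^{-n})$ yet the pair count is still beaten by the Gaussian decay $2^{-n(1+\delta/2)^2}$. A routine check that $\psi$ is increasing and satisfies $\psi(cr)\le(1+o(1))\psi(r)$ as $c\to 1$, together with absorbing the $O(1)$ discrepancy between $\log(1/(u'-t'))$ and $n\log 2$ into bounded multiplicative factors, completes the estimates.
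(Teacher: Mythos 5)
Your argument is correct, and it is worth noting that the paper does not actually prove Lemma~\ref{l:ub}: it cites it as well known, pointing to the more general Corollary~7.2.3 of Marcus and Rosen \cite{MR}. What you have written is the standard L\'evy-type two-scale chaining proof of the sharp uniform modulus for fractional Brownian motion, and the essential points are all in place: the only distributional inputs are stationarity of increments and the Gaussian tail bound; the choice $N=\lceil(1+\delta/4)n\rceil$ correctly balances the $2^{2N-n}$ pair count against the per-pair tail $2^{-n(1+\delta/2)^2}$, giving an exponent at most $-n\delta/2+O(1)$, which is summable; and the interpolation error $C_\alpha\psi(2^{-N})$ is negligible against $\psi(2^{-n})$ since $\psi(2^{-N})/\psi(2^{-n})=2^{-\alpha(N-n)}\sqrt{N/n}\to 0$. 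Two points to make explicit when writing it out: the telescoping identification $B(t)=\lim_m B(t_m)$ uses the almost sure continuity of $B$ (part of the definition of fBm adopted in the paper), and one should note that $u'-t'\geq h-2^{-N}\geq 2^{-n-2}$ for large $n$, so the two grid points are distinct and $\psi(u'-t')$ is comparable to $\psi(h)$ from below as well as above. Your route buys a self-contained, elementary proof at the cost of a page of routine estimates; the paper's citation buys brevity and a more general statement.
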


Now we are ready to prove Theorem~\ref{t:heart}.

\begin{proof}[Proof of Theorem~\ref{t:heart}]
Fix $\delta\in (0,\eps)$. By Lemmas~\ref{l:dkey} and \ref{l:ub} there exists a random $N\in \N^+$ such that, almost surely,
for all $k>N$ we have
\begin{enumerate}[(i)]
\item \label{i1} $\max_{t\in [0,1]} |B(t)|\leq N$;
\item \label{i2} $\Pi(\iU_k,2\eps)$ holds;
\item \label{i3} $\diam B(I_{k,p})\leq k2^{-\alpha k}$ for all $0\leq p<2^{k}$;
\item \label{i4} $2k+1\leq 2^{\eps k}$.
\end{enumerate}
Fix a sample path $B$ and $N\in \N^+$ for which the above properties hold. Let us fix an arbitrary $k>N$,
it is enough to prove that $\Gamma(\iU_k,3\eps)$ holds. Let $q\in \Z$ and $n\in \N^+$ be given, we need to show that
\begin{equation} \label{eq:Bnm} G_{n,q}(\iU_k)\leq 2^{3\eps n}.
\end{equation}
If $n<k$ then $\iU_k\subset \iI^k$ implies that $G_{n,q}(\iU_k)=0$, and we are done.
Now assume that $n\geq k$. Property \eqref{i1} yields that if $q'\in \Z$ and $|q'|>n2^{\alpha n}$ then $P_{n,q'}(\iU_k)=0$.
Therefore \eqref{i2} implies that for all $q'\in \Z$ we have
\begin{equation} \label{eq:Snm} P_{n,q'}(\iU_k)\leq 2^{2 \eps n}.
\end{equation}
Let $I_{n,p}$ be a time interval of order $n$ such that $B(I_{n,p})\cap J_{n,q}\neq \emptyset$.
By \eqref{i3} we have
\begin{equation} \label{eq:BInp} B(I_{n,p})\subset \bigcup_{q': |q'-q|< n} J_{n,q'}.
\end{equation}
Finally, \eqref{eq:BInp}, \eqref{eq:Snm} and \eqref{i4} imply that
\begin{equation*}
G_{n,q}(\iU_k)\leq \sum_{q': |q'-q|\leq n} P_{n,q'}(\iU_k) \leq (2n+1) 2^{2\eps n} \leq 2^{3\eps n}.
\end{equation*}
Hence \eqref{eq:Bnm} holds, and the proof is complete.
\end{proof}

\section{The main theorems} \label{s:main}

The goal of this section is to prove Theorems~\ref{t:main}, \ref{t:p}, and \ref{t:mixed}.

\begin{proof}[Proof of Theorem~\ref{t:main}] As $B$ is $\gamma$-H\"older continuous for all
$\gamma<\alpha$, Fact~\ref{f:Holder} yields that, almost surely, for all $A\subset [0,1]$ we have
\[\dim_H B(A)\leq (1/\alpha)\dim_H A \quad \textrm{and} \quad \dim_P B(A)\leq (1/\alpha)\dim_P A.\]
Therefore it is enough to show the opposite inequalities. Fix an arbitrary $\eps\in (0,1)$,
it is enough to prove that, almost surely, for all $E\subset \R$ we have
\begin{enumerate}
\item \label{i:H} $\dim_H (B^{-1}(E)\cap D)\leq \alpha \dim_H E+4\eps$;
\item \label{i:P}  $\dim_P (B^{-1}(E)\cap D)\leq \alpha \dim_P E+4\eps$.
\end{enumerate}
As $\dim_{MA} D\leq \alpha$, we have $\dim^{\eps}_{MA} D\leq \alpha$. Therefore
$D=\bigcup_{i=1}^{\infty} D_i$, where $\dim^{\eps}_{A} D_i<\alpha+\eps$ for all $i\in \N^+$.
Thus, by the countable stability of Hausdorff and packing dimensions, we may assume that $\dim^{\eps}_{A} D<\alpha+\eps$.
For all $n\in \N^+$ let
\[\iU_n=\{U\in \iI_n: U\cap D\neq \emptyset\}.\]
Since $\dim^{\eps}_{A} D<\alpha+\eps$, the set $\iU_n\subset \iI^{n}$ is $(\alpha+\eps,\eps)$-balanced for all
$n$ large enough. Therefore Theorem~\ref{t:heart} yields that, almost surely, $\Gamma(\iU_n,3\eps)$ holds for all large enough $n$.
Fix a sample path $B$ and $N\in \N^+$ such that $\Gamma(\iU_n,3\eps)$ holds for all $n\geq N$. Fix an arbitrary $E\subset \R$.

First we prove \eqref{i:H}. Let $\delta>0$ be arbitrary.
Let $\iJ=\bigcup_{n=N}^{\infty} \iJ_n$ and let $s=\dim_H E$,
then there is a cover $E\subset \bigcup_{k=1}^{\infty} J_k$ such that $J_k\in \iJ$ for all $k$ and
$\sum_{k=1}^{\infty} (\diam J_k)^{s+\eps}<\delta$. For all $n\geq N$ let $M_n$ be the number of indices $k$ for which
$J_k\in \iJ_n$, which implies that
\begin{equation} \label{eq:Mn} \sum_{n=N}^{\infty} M_n 2^{-\alpha (s+\eps)n}<\delta.
\end{equation}
The definition of $\Gamma(\iU_n,3\eps)$ yields that for all $n\geq N$ and $J\in \iJ_n$ the set $B^{-1}(J)\cap D$ can be covered by $2^{3\eps n}$
time intervals of length $2^{-n}$. Therefore there is a covering of $B^{-1}(E)\cap D$ containing for each $n\geq N$ at most
$M_n 2^{3\eps n}$ intervals of $\iI_n$. Thus \eqref{eq:Mn} yields that
\[\iH^{\alpha s+4\eps}_{\infty}(B^{-1}(E)\cap D)\leq \sum_{n=N}^{\infty} M_n 2^{3\eps n} 2^{-(\alpha s+4\eps)n}\leq \sum_{n=N}^{\infty} M_n 2^{-\alpha (s+\eps)n}<\delta.\]
As $\delta>0$ was arbitrary, we obtain that $\iH^{\alpha s+4\eps}_{\infty}(B^{-1}(E)\cap D)=0$. Therefore
$\dim_H (B^{-1}(E)\cap D)\leq \alpha s+4\eps$, and \eqref{i:H} follows.

Now we prove \eqref{i:P}. Assume that $\overline{\dim}_M E=t$, first we show that
\begin{equation} \label{eq:uM} \overline{\dim}_M (B^{-1}(E)\cap D)\leq \alpha \, \overline{\dim}_M E+4\eps.
\end{equation}
Fix $n\geq N$, by increasing $N$ if necessary we may assume that $E$ can be covered by $2^{\alpha (t+\eps)n}$ intervals of $\iJ_n$.
Since $\Gamma(\iU_n,3\eps)$ holds, for all $J\in \iJ_n$ the set $B^{-1}(J)\cap D$ can be covered by
$2^{3\eps n}$ intervals of $\iI_n$. Therefore $B^{-1}(E)\cap D$ can be covered by $2^{3\eps n} 2^{\alpha (t+\eps)n}$ intervals of
$\iI_n$ having length $2^{-n}$. Thus $\overline{\dim}_M (B^{-1}(E)\cap D)\leq \alpha t+4\eps$, so \eqref{eq:uM} holds.
Applying this for $E_i$ in place of $E$ we obtain that
\begin{align*} \dim_P (B^{-1}(E)\cap D)&\leq \inf\left\{\sup_{i} \overline{\dim}_M (B^{-1}(E_i)\cap D): E=\bigcup_{i=1}^{\infty} E_i\right\} \\
&\leq \inf\left\{\sup_{i} \alpha \, \overline{\dim}_M E_i+4\eps: E= \bigcup_{i=1}^{\infty} E_i\right\}\\
&=\alpha  \dim_P E+4 \eps.
\end{align*}
Hence \eqref{i:P} holds, and the proof is complete.
\end{proof}

\begin{definition} Assume that $D\subset [0,1]$, $E\subset \R$, and $I\in \iI$. For all $n\in \N^+$ define
\begin{align*}\iU_n(D,I)&=\{U\in \iI_n: U\subset I \textrm{ and } U\cap D\neq \emptyset\}, \\
\iV_n(D,I)&=\{U\in \iU_n(D,I):  (\inter U)\cap D\neq \emptyset\}. \\
\end{align*}
\end{definition}

\begin{proof}[Proof of Theorem~\ref{t:p}]

Implication $\eqref{equi2} \Rightarrow \eqref{equi1}$ follows from Theorem~\ref{t:main}.

Now we prove $\eqref{equi1} \Rightarrow \eqref{equi2}$. Assume to the contrary that \eqref{equi1} holds
and $\dim_{MA} D>\alpha$. By Theorem~\ref{t:reg} we may assume that $D$ is compact. By the definition of the
modified Assouad dimension there exists an $\eps\in (0,1)$ such that $\dim^{\eps}_{MA} D>\alpha+\eps$.
By Lemma~\ref{l:homog}~\eqref{eq:homog2} we may assume that $\dim^{\eps}_{A} (D\cap U)>\alpha+\eps$ for every
open set $U$ which intersects $D$. Therefore $D$ is perfect.
By Lemma~\ref{l:ub} there is random $M\in \N^+$ such that, almost surely,
for all $n\geq M$ and $p\in \{0,\dots,2^n-1\}$ we have
\begin{equation} \label{eq:diam} \diam B(I_{n,p})< n2^{-\alpha n}.
\end{equation}
%%%
Fix a sample path $B$ and $M\in \N^+$ with property \eqref{eq:diam}.
In order to obtain a contradiction it is enough to construct
a compact set $C\subset D$ such that $\dim_P C\geq \eps^2$ and $\dim_P B(C)\leq \eps^2/(\alpha+\eps^2)$.

First we construct $C$. Define $i_0=0$ and $I_0=[0,1]$.
Let $\ell \in \N$ and assume that positive integers
$m_{i_0\dots i_{\ell-1}},n_{i_0\dots i_{\ell-1}},N_{i_0\dots i_{\ell-1}}\in \N^+$ and intervals $I_{i_0\dots i_\ell}$ with
$D\cap \inter I_{i_0 \dots i_\ell}\neq \emptyset$ are already defined
for every $(i_1,\dots,i_\ell)\in \prod_{k=0}^{\ell-1} \{1,\dots,N_{i_0\dots i_{k}}\}$. Note that for $\ell=0$ the only
assumption is $D\cap \inter I_0\neq \emptyset$ which clearly holds.
Since $\dim^{\eps}_{MA} (D\cap \inter I_{i_0\dots i_\ell})>\alpha+\eps$,
there exist positive integers $m=m_{i_0\dots i_\ell}$ and $n=n_{i_0 \dots i_\ell}$ such that
there is an $I_m\in \iI_m$ with
\begin{equation} \label{eq:Q} |\iU_n(D,I_m)|\geq 2^{(\alpha+\eps)(n-m)},
\end{equation}
and we have
\begin{equation} \label{eq:mndef} m\leq (1-\eps)n.
\end{equation}
We define the lexicographical order $\prec$ on
$\Sigma=\bigcup_{n=1}^{\infty} \N^n$ as follows. Let $\prec_n$ be the lexicographical order on $\N^n$, and
for $\sigma\in \Sigma$ let $|\sigma|$ denote the length of $\sigma$.
For $\sigma\in \Sigma$ and $n\leq |\sigma|$ let $\sigma(n)\in \N^n$ denote
the restriction of $\sigma$ to its first $n$ coordinates. Let $\sigma, \theta\in \Sigma$ such that
$\min\{|\sigma|,|\theta|\}=n$. We write $\sigma\prec \theta$ iff either $\sigma(n)\prec_n \theta(n)$ or
$\sigma(n)=\theta(n)$ and $|\sigma|<|\theta|$. By proceeding according to $\prec$
we may assume that if $(j_0,\dots,j_{q})\prec (i_0,\dots,i_{\ell})$ then
\begin{equation} \label{eq:j1} m=m_{i_0 \dots i_{\ell}}>2^{n_{j_0\dots j_{q}}}.
\end{equation}
For every $E\subset \R$ let
\[\iW_{n,m}(E)=\{U\in \iV_n(D,I_m): B(U)\cap E\neq \emptyset\}.\]
Now we define $N_{i_0\dots i_\ell}\in \N^+$ and intervals $I_{i_0 \dots i_{\ell+1}}$ for all $1\leq i_{\ell+1}\leq N_{i_0\dots i_\ell}$.
By \eqref{eq:diam} the diameter of $B(I_m)$ is at most $m2^{-\alpha m}$, so
it can be covered by $m2^{\alpha(n-m)}+2\leq n2^{\alpha (n-m)}$ intervals of $\iJ_n$.
Since $D\cap \inter I_{i_1\dots i_\ell}$ is perfect, there do not exist three
consecutive intervals in $\iU_n(D,I_m)$ such that none of their interior intersects $D$.
Therefore \eqref{eq:Q} and \eqref{eq:mndef} imply that there is an interval
$J\in \iJ_n$ such that
\begin{equation*} |\iW_{n,m}(J) |\geq 2^{\eps(n-m)}/(3n)\geq  2^{(\eps^2+o(1))n}. \end{equation*}
Define $N_{i_0\dots i_\ell}$ and intervals $I_{i_0\dots i_\ell i_{\ell+1}}$ such that
\begin{equation*}
N_{i_0\dots i_\ell}=2^{(\eps^2+o(1))n} \quad \textrm{ and }
\quad \{I_{i_0\dots i_\ell i_{\ell+1}}\}_{1\leq i\leq N_{i_0\dots i_\ell}}\subset \iW_{n,m}(J).
\end{equation*}
Define $\Sigma_0\subset \Sigma$ as
\[\Sigma_0=\bigcup_{\ell=0}^{\infty}\left\{(i_0,\dots,i_{\ell}): 1\leq i_k\leq N_{i_0\dots i_{k-1}} \textrm{ for all } 1\leq k\leq \ell\right\}.\]
Define the compact set $C$ by
\begin{equation*}C=\bigcap_{\ell=1}^{\infty} \left(\bigcup_{i_1=1}^{N_{i_0}} \cdots  \bigcup_{i_\ell=1}^{N_{i_0 \dots i_{\ell-1}}} I_{i_0\dots i_\ell}\right).
\end{equation*}

Now we prove that $\dim_P C\geq \eps^2$. By Lemma~\ref{l:pack}~\eqref{eq:pack1} it is enough to prove that for each open set $U$ intersecting $C$ we have $\overline{\dim}_M (C\cap U)\geq \eps^2$. Fix such an open set $U$, then for every large enough $\ell\in \N^+$ there is an interval $I_{i_0\dots i_\ell}\subset U$ for some $(i_1,\dots,i_\ell) \in \prod_{k=0}^{\ell-1} \{1,\dots,N_{i_0\dots i_{k}}\}$.
The definition of $m=m_{i_0\dots i_\ell}$, $n=n_{i_0\dots i_\ell}$, and $N_{i_0\dots i_\ell}$
yield that
\[N_{2^{-n}}(C\cap U)\geq N_{i_0\dots i_\ell}=2^{(\eps^2+o(1))n}.\]
Clearly $n=n_{i_0\dots i_\ell}\to \infty$ as $\ell \to \infty$, which implies that $\overline{\dim}_M (C\cap U)\geq \eps^2$.

Finally, we prove that
\[\dim_P B(C)\leq \overline{\dim}_M B(C)\leq  \eps^2/(\alpha+\eps^2).\]
For $k\in \N^+$ and $E\subset \R$ let $M_k(E)$ denote the number of intervals of $\iJ_k$ that are needed to cover $E$.
We need to prove that
\begin{equation} \label{eq:mkb} M_k(B(C))\leq 2^{(\alpha \eps^2/(\alpha+\eps^2)+o(1))k}.
\end{equation}
Suppose that $\sigma_0\prec \sigma_1\prec \sigma_2$ are consecutive elements of $\Sigma_0$ and let
$m_i=m_{\sigma_i}$ and $n_i=n_{\sigma_i}$ for $i\in \{0,1,2\}$. We may assume that $m_{1}<k\leq m_{2}$.
By construction, $C\setminus I_{\sigma_1}$ can be covered by at most $|\iI_{n_0}|=2^{n_0}$ intervals of $\iI_{m_2}$.
By \eqref{eq:j1} we have $2^{n_0}<m_1$ and $k\leq m_{2}$, so $C\setminus I_{\sigma_1}$ can be covered by at most $m_1$ intervals of
$\iI_{k}$. Thus \eqref{eq:diam} implies that
\begin{equation} \label{eq:mkb1} M_k(B(C\setminus I_{\sigma_1}))\leq m_1(k+1)<2k^2.
\end{equation}
Now we prove an upper bound for $M_k(B(C\cap I_{\sigma_1}))$. First assume that we have $m_1<k\leq n_1(\alpha+\eps^2)/\alpha$. Then
by the construction there is a $J\in \iJ_{n_1}$ such that for all $I\in \iI_{n_1}$
which intersects $C\cap \inter I_{\sigma_1}$ we have $B(I)\cap J\neq \emptyset$.
Thus \eqref{eq:diam} yields that $B(C\cap \inter I_{\sigma_1})$ can be covered by $3n_1$ consecutive intervals
of $\iJ_{n_1}$. As the contribution of the endpoints of $I_{\sigma_1}$ to $M_k(B(C\cap I_{\sigma_1}))$ is not more than $2$, we have
\begin{equation} \label{eq:mkb3} M_k(B(C\cap I_{\sigma_1}))\leq 4+3n_1 2^{\alpha(k-n_1)} \leq 2^{(\alpha\eps^2/(\alpha+\eps^2)+o(1))k}.
\end{equation}
Finally, assume that $n_1(\alpha+\eps^2)/\alpha<k\leq m_{2}$. Then $I_{\sigma_1}$ contains $2^{(\eps^2+o(1))n_1}$ intervals of
$\iI_k$ which intersects $C$, so \eqref{eq:diam} yields
\begin{equation} \label{eq:mkb4} M_k(B(C\cap I_{\sigma_1})) \leq  (k+1)2^{(\eps^2+o(1))n_1}\leq 2^{(\alpha\eps^2/(\alpha+\eps^2)+o(1))k}.
\end{equation}
Inequalities~\eqref{eq:mkb1}, \eqref{eq:mkb3}, and \eqref{eq:mkb4} imply \eqref{eq:mkb}. The proof is complete.
\end{proof}

Now we prove Theorem~\ref{t:mixed}. First we need some preparation.

\begin{definition} Let $\iU\subset \iI$ be a set of dyadic time intervals. For $\beta>0$ define
\[H^{\beta}(\iU)=\sum_{U\in \iU} (\diam U)^{\beta}.\]
\end{definition}

\begin{lemma} \label{l:balance} Let $\iU\subset \iI$ be a set of dyadic time intervals and let $\beta>0$. Then there is a set $\iV\subset \iI$ such that
\begin{enumerate}
\item \label{i:a} $\iV$ is $\beta$-balanced,
\item \label{i:b} $\bigcup \iU \subset \bigcup \iV$,
\item \label{i:c} $H^{\beta}(\iV)\leq H^{\beta}(\iU)$.
\end{enumerate}
\end{lemma}

\begin{proof}
Let $\iU_0=\iU$, for all $k\in \N$ we inductively define 
\[\iU_{k+1}=\{I\in \iI_m: m\in \N \textrm{ and } N_n(\iU_k,\iI)\geq 2^{\beta (n-m)} \textrm{ for some } n\geq m \}.\]
Taking $n=m$ above shows that $\iU_k\subset \iU_{k+1}$ for all $k\in \N$. Define 
\[ \iU_{\infty}=\bigcup_{k=1}^{\infty} \iU_k  \]
and let $\iV$ be the set of maximal elements of $\iU_{\infty}$ with respect to inclusion, that is, 
\[\iV=\{V\in \iU_{\infty}: \textrm{ there is no } W\in \iU_{\infty} \setminus\{V\} \textrm{ with } V\subset W\}. \]  
First assume to the contrary that $\iV$ is not $\beta$-balanced. Let us choose $m<n$ and $I\in \iI_m$ such that $N_n(\iV,I)>2^{\beta(n-m)}$. Then there exists a $k\in \N^+$ such that $\{V\in \iV\cap \iI_n: V\subset I\}\subset  \iU_{k}$. Therefore $N_n(\iU_{k},I)\geq N_n(\iV,I)>2^{\beta(n-m)}$, so $I\in \iU_{k+1}$ by definition. As $I\in \iU_{\infty}$, the elements of $\{V\in \iV\cap \iI_n: V\subset I\}$ are not maximal in $\iU_{\infty}$ with respect to inclusion, so they cannot be in $\iV$. This is a contradiction, which proves \eqref{i:a}. Clearly $\bigcup \iU\subset \bigcup \iU_{\infty}=\bigcup \iV$, so \eqref{i:b} holds. Finally, we will prove \eqref{i:c}. Let us fix $I\in \iV$. As the intervals in $\iV$ are non-overlapping, it is enough to show that 
\begin{equation} \label{eq:bet} (\diam I)^{\beta} \leq \sum_{U\in \iU,\, U\subset I} (\diam U)^{\beta}.
\end{equation}
Let $k\in \N$ be the minimal number such that $I\in \iU_k$, we prove the claim by induction on $k$. If $k=0$ then \eqref{eq:bet} is straightforward. Assume by induction that \eqref{eq:bet} holds for $k=\ell$, it is enough to prove it for $k=\ell+1$. We can choose $m<n$ such that $I\in \iI_m$ and $N_n(\iU_\ell,I)\geq 2^{\beta(n-m)}$. 
Thus 
\[(\diam I)^{\beta}=2^{-\beta m}\leq N_n(\iU_\ell,I) 2^{-\beta n}=\sum_{V\in \iU_\ell\cap \iI_n,\, V\subset I} (\diam V)^{\beta}.\]
Therefore, using also the induction hypothesis and that the intervals of $\iI_n$ are non-overlapping, we obtain that 
\begin{align*} 
(\diam I)^{\beta}&\leq \sum_{V\in \iU_\ell\cap \iI_n,\, V\subset I} (\diam V)^{\beta} \\
&\leq \sum_{V\in \iU_\ell\cap \iI_n,\, V\subset I} \left(\sum_{U\in \iU,\, U\subset V} (\diam U)^{\beta}\right) \\
&\leq \sum_{U\in \iU,\, \iU\subset I} (\diam U)^{\beta}.
\end{align*}
Thus \eqref{eq:bet} holds, and the proof is complete.
\end{proof}

\begin{proof}[Proof of Theorem~\ref{t:mixed}]
Implication $\eqref{mix1} \Rightarrow \eqref{mix2}$ is straightforward, if \eqref{mix1} holds then
$(1/\alpha)\dim_H D \leq \dim_P(B(D))\leq 1$, which implies $\dim_H D\leq \alpha$.

Now we prove $\eqref{mix2} \Rightarrow \eqref{mix1}$.
Fix $0<\eps<1-\alpha$ arbitrarily, it is enough to show that, almost surely,
for all $E\subset \R$ we have
\begin{equation} \label{eq:eps} \dim_H (B^{-1}(E)\cap D)\leq \alpha \dim_P E+5 \eps.
\end{equation}
%%%
Since $\dim_H D\leq \alpha$, there are covers $\iU_k \subset \iI$ of $D$ such that $H^{\alpha+\eps}(\iU_k)<2^{-k}$ for every $k\in \N^+$.
By Lemma~\ref{l:balance} for each $k$ there is a cover $\iV_k\subset \iI$ of $D$ which is $(\alpha+\eps)$-balanced and
$H^{\alpha+\eps}(\iV_k)<2^{-k}$. As $\alpha+\eps<1$, the inequality $H^{\alpha+\eps}(\iV_k)<2^{-k}$ implies that $\iV_k\subset \iI^k$.
By Theorem~\ref{t:heart}, almost surely, $\Gamma(\iV_k,3\eps)$ holds for all large enough $k$.
Fix a sample path $B$ and $N\in \N^+$ such that $\Gamma(\iV_k,3\eps)$ holds for all $k\geq N$.
Let $E\subset \R$ be arbitrarily fixed, first we prove that
\begin{equation} \label{eq:HP} \dim_H (B^{-1}(E)\cap D)\leq \alpha \, \overline{\dim}_M E+5\eps.
\end{equation}
Assume that $\overline{\dim}_{M} E=t$, by increasing $N$ if necessary we may assume that for all $k\geq N$ the set
$E$ can be covered by $2^{\alpha(t+\eps)k}$ intervals of $\iJ_k$.
Fix $k\geq N$ and for all $n\geq k$ define
\[\iW_n=\{I\in \iV_k\cap \iI_n: B(I)\cap E\neq \emptyset\}.\]
Fix $n\geq k$. Since $\Gamma(\iV_k,3\eps)$ holds, for every $J\in \iJ_n$ we have
\[|\{I\in \iW_n: B(I)\cap J\neq \emptyset\}|\leq 2^{3\eps n}.\]
As $E$ can be covered by $2^{\alpha  (t+\eps)n}$ intervals of $\iJ_n$, the above inequality yields
\[|\iW_n|\leq 2^{3\eps n} 2^{\alpha (t+\eps)n}.\]
Therefore
\begin{equation} \label{eq:4eps}
H^{\alpha t+5\eps}(\iW_n)\leq 2^{3\eps n} 2^{\alpha (t+\eps)n} 2^{-(\alpha t+5\eps)n}\leq 2^{-\eps n}.
\end{equation}
Since $\bigcup_{n=k}^{\infty} \iW_n$ is a covering of $B^{-1}(E)\cap D$, inequality \eqref{eq:4eps} implies that
\begin{equation*} \iH^{\alpha t+5\eps}_{\infty}(B^{-1}(E)\cap D)\leq \sum_{n=k}^{\infty} H^{\alpha t+5\eps}(\iW_n) \leq  \sum_{n=k}^{\infty}  2^{-\eps n}=c_{\eps} 2^{-\eps k},
\end{equation*}
where $c_{\eps}=1/(1-2^{-\eps})$. This is true for all $k$, thus $\iH^{\alpha t+5\eps}_{\infty}(B^{-1}(E)\cap D)=0$. Therefore
$\dim_H (B^{-1}(E)\cap D)\leq \alpha t+5\eps$, so \eqref{eq:HP} holds.

Finally, applying the countable stability of Hausdorff dimension and
\eqref{eq:HP} for $E_i$ in place of $E$ we obtain that
\begin{align*} \label{eq:Q}
\dim_H (B^{-1}(E)\cap D)&=\inf\left\{ \sup_i \dim_H (B^{-1}(E_i)\cap D) : E=\bigcup_{i=1}^{\infty} E_i \right\}  \\
&\leq \inf\left\{ \sup_i \alpha \, \overline{\dim}_M E_i+5 \eps : E=\bigcup_{i=1}^{\infty} E_i \right\}\\
&=\alpha \dim_P E+5 \eps.
\end{align*}
Hence \eqref{eq:eps} holds, and the proof is complete.
\end{proof}

\section{Sets defined by digit restrictions and self-similar sets} \label{s:examples}

The goal of this section is to prove Theorems~\ref{t:digit} and \ref{t:ss}. These answer
Problem~\ref{p:H} in case of sufficiently homogeneous sets. Problem~\ref{p:H} remains open in general.

\subsection{Sets defined by digit restrictions} Before proving Theorem~\ref{t:digit} we need some preparation.

\begin{definition}
Let $S\subset \N^+$. For all $m,n\in \N$ with $m<n$ define
\[d_{m,n}(S)=\frac{|S\cap \{m+1,\dots,n\}|}{n-m} \quad \textrm{and} \quad d_n(S)=\frac{|S\cap \{1,\dots,n\}|}{n}.\]
\end{definition}

\begin{fact} \label{f:d} For all $S\subset \N^{+}$ we have
\begin{align*} \dim_H D_{S}&=\underline{\dim}_M D_S=\liminf_{n\to \infty} d_{n}(S), \\
\dim_{P} D_S&=\overline{\dim}_M D_S=\limsup_{n\to \infty} d_{n}(S), \\
\dim_{MA} D_S&=\lim_{\eps\to 0+} \limsup_{n\to \infty} \max_{m\leq (1-\eps)n} d_{m,n}(S),\\
\dim_A D_S&=\limsup_{n\to \infty} \max_{m\leq n} d_{m,n}(S).
\end{align*}
\end{fact}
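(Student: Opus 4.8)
The plan is to reduce all four dimension formulas to a single combinatorial estimate on dyadic covering numbers of $K_S$ and its cylinders, and then to feed this estimate into the definitions, using the homogeneity Lemmas~\ref{l:pack} and \ref{l:homog} to upgrade the Minkowski-type quantities to the packing and modified Assouad dimensions. First I would record the basic count: a dyadic interval $I_{n,p}$ meets $K_S$ iff all of its constrained digits vanish, so exactly $2^{|S\cap\{1,\dots,n\}|}$ intervals of order $n$ meet $K_S$; more generally, for a cylinder $K_S\cap I_{N,p}$ (with $I_{N,p}$ meeting $K_S$) the number of order-$n$ dyadic intervals meeting it is $2^{|S\cap\{N+1,\dots,n\}|}$ for $n\ge N$. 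Since covering numbers $N_r$ by arbitrary balls differ from dyadic covering numbers only by a bounded factor, we get $\frac{\log N_{2^{-n}}(K_S)}{n\log 2}=d_n(S)+o(1)$, and passing to $\liminf$ and $\limsup$ immediately yields $\underline{\dim}_M K_S=\liminf_n d_n(S)$ and $\overline{\dim}_M K_S=\limsup_n d_n(S)$.

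The Assouad formulas follow the same way. A ball $B(x,R)$ with $R\asymp 2^{-m}$ meets $O(1)$ order-$m$ cylinders, and covering its trace on $K_S$ at scale $r\asymp 2^{-n}$ needs $2^{|S\cap\{m+1,\dots,n\}|}=(R/r)^{d_{m,n}(S)}$ balls; hence $N_r(B(x,R))\le C(R/r)^{\gamma}$ for all $0<r\le R$ iff $\gamma\ge\limsup_n\max_{m\le n}d_{m,n}(S)$, which is the Assouad formula. Imposing the extra restriction $r^{1-\eps}\le R$, i.e.\ $m\le(1-\eps)n$, gives in the same way $\dim^{\eps}_A K_S=\limsup_n\max_{m\le(1-\eps)n}d_{m,n}(S)$.

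It remains to prove the lower bounds $\dim_H K_S\ge\liminf_n d_n(S)$ and $\dim_P K_S\ge\limsup_n d_n(S)$, and then to pass from $\dim^{\eps}_A$ to $\dim^{\eps}_{MA}$. For Hausdorff dimension I would apply the mass distribution principle to the natural Bernoulli measure $\mu$ (each free digit an independent fair coin), for which $\mu(I_{n,p})=2^{-|S\cap\{1,\dots,n\}|}$; whenever $s<\liminf_n d_n(S)$ one has $\mu(I_{n,p})\le C 2^{-ns}$ for all $n$, hence $\mu(A)\le C'(\diam A)^s$ and $\dim_H K_S\ge s$. For packing dimension I would invoke Lemma~\ref{l:pack}~\eqref{eq:pack1}: every nonempty relatively open subset of $K_S$ contains a cylinder $K_S\cap I_{N,p}$, which is a scaled copy of the digit-restriction set attached to the shifted index set $(S-N)\cap\N^{+}$; since discarding the bounded initial segment $\{1,\dots,N\}$ does not change $\limsup_n d_{m,n}(S)$, every such cylinder has $\overline{\dim}_M=\limsup_n d_n(S)$, so $\dim_P K_S\ge\limsup_n d_n(S)$, matching the upper bound $\dim_P\le\overline{\dim}_M$.

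Finally, for the modified Assouad dimension I would show $\dim^{\eps}_{MA}K_S=\dim^{\eps}_A K_S$ for each $\eps$ and then let $\eps\to 0+$. The inequality $\le$ is immediate from the trivial one-piece decomposition, and for $\ge$ I would use Lemma~\ref{l:homog}~\eqref{eq:homog1}: it suffices that every nonempty relatively open $U\subset K_S$ satisfies $\dim^{\eps}_A U\ge\dim^{\eps}_A K_S$. Such a $U$ again contains a cylinder $K_S\cap I_{N,p}$, whose $\eps$-Assouad dimension equals $\limsup_n\max_{N\le m\le(1-\eps)n}d_{m,n}(S)$. The main technical point, which I expect to be the real obstacle, is verifying that restricting the range of $m$ to $m\ge N$ leaves this $\limsup$ unchanged: if the unrestricted maximum is realized along a sequence $(m_j,n_j)$ with $m_j$ bounded, then $d_{m_j,n_j}(S)=d_{N,n_j}(S)+o(1)$, so the admissible choice $m=N$ recovers the same limiting value. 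This yields $\dim^{\eps}_A(K_S\cap I_{N,p})\ge\dim^{\eps}_A K_S$, hence $\dim^{\eps}_{MA}K_S=\dim^{\eps}_A K_S$, and letting $\eps\to 0+$ produces the stated formula for $\dim_{MA}K_S$.
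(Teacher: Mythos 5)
Your argument follows the same route as the paper's (very terse) proof: the dyadic count $N_{2^{-n}}(K_S\cap I)\asymp 2^{|S\cap\{m+1,\dots,n\}|}$ for an order-$m$ cylinder $I$, the Bernoulli measure together with the mass distribution principle for the Hausdorff lower bound, Lemma~\ref{l:pack}~\eqref{eq:pack1} applied to cylinders to get $\dim_P K_S=\overline{\dim}_M K_S$, and Lemma~\ref{l:homog}~\eqref{eq:homog1} applied to cylinders to get $\dim^{\eps}_{MA}K_S=\dim^{\eps}_{A}K_S$ before letting $\eps\to 0+$. For the first three displayed formulas your write-up is correct and supplies exactly the details the paper omits; in particular you correctly isolate and resolve the one genuine point in the modified Assouad computation, namely that restricting to $m\geq N$ inside a cylinder does not change $\limsup_{n}\max_{m\leq(1-\eps)n}d_{m,n}(S)$, and your $\eps$-Assouad computation is sound because the constraint $m\leq(1-\eps)n$ forces $n-m\geq\eps n\to\infty$, so the multiplicative constant $C$ cannot interfere with the exponent.

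The step that does not survive scrutiny is the unrestricted Assouad formula. The asserted equivalence ``$N_r(B(x,R))\leq C(R/r)^{\gamma}$ for all $0<r\leq R$ iff $\gamma\geq\limsup_{n}\max_{m\leq n}d_{m,n}(S)$'' fails in the ``only if'' direction: for pairs with $n-m$ bounded the count $2^{(n-m)d_{m,n}(S)}$ is simply absorbed into $C$, so such pairs impose no constraint on $\gamma$. Concretely, for $S=\{k^2:k\in\N^+\}$ one has $d_{n-1,n}(S)=1$ whenever $n$ is a perfect square, so the right-hand side equals $1$, whereas $|S\cap(m,n]|\leq\sqrt{n}-\sqrt{m}+1\leq\sqrt{n-m}+1$ shows $\dim_A K_S=0$. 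The correct characterization replaces $\limsup_{n\to\infty}\max_{m\le n}$ by a limit taken as $n-m\to\infty$, e.g.\ $\lim_{k\to\infty}\sup_{m}d_{m,m+k}(S)$. To be fair, this defect is inherited from the statement itself: the fourth line of the Fact is false as written for the same reason, and the error is harmless for the remainder of the paper, which only invokes the Hausdorff, packing, and modified Assouad formulas. Still, your ``iff'' is precisely where a careful proof breaks down, and it is worth noting explicitly why the $\eps$-Assouad case (where bounded-gap pairs are excluded) behaves differently from the unrestricted one.
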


\begin{proof} The statements for the Hausdorff and Minkowski dimensions are well known,
the proof of the lower bound for the Hausdorff dimension is based on Frostman's lemma,
while the others follow from an easy computation, similarly to the case of Assouad dimension.
Lemma~\ref{l:pack} yields that the packing dimension agrees with the upper Minkowski dimension. Lemma~\ref{l:homog}~\eqref{eq:homog1} and
straightforward calculation show that for all $0<\eps<1$ we have
\[ \dim^{\eps}_{MA} D_S=\dim^{\eps}_{A} D_S=\limsup_{n\to \infty} \max_{m\leq (1-\eps)n} d_{m,n}(S).\]
Then the definition of modified Assouad dimension completes the proof.
\end{proof}

\begin{definition} \label{d:Nn} Let $D\subset [0,1]$. For a dyadic interval $I\in \iI$ and $n\in \N^+$ define
\[N_n(D,I)=|\{U\in \iI_n: U\subset I \textrm{ and } U\cap D\neq \emptyset\}|.\]
\end{definition}

\begin{lemma} \label{l:tech} Let $D\subset [0,1]$ with $\dim_{MA} D>\alpha$.
Then there exist positive integers $m_k,n_k$ and $\eps\in (0,1)$ such that for all $k\in \N^+$
\begin{enumerate}[(i)]
\item \label{0.1} there is an $I_k\in \iI_{m_k}$ such that $N_{n_k}(D,I_k)\geq 2^{(\alpha+\eps)(n_k-m_k)}$,
\item \label{0.2} $n_k/m_{k+1}=o(1)$,
\item \label{0.3} $m_k=(1-\eps+o(1))n_k$.
\end{enumerate}
\end{lemma}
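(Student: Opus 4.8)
The plan is to work throughout with the dyadic covering counts $N_n(D,I)$ of Definition~\ref{d:Nn} and with the \emph{scale-range rate} $r(m,n)=\frac{1}{n-m}\log_2\max_{I\in\iI_m}N_n(D,I)$ attached to a window of orders $(m,n]$. Unwinding the definitions, $\dim^{\eps}_A D$ is exactly the supremum of the limit points of $r(m,n)$ over windows with $m\le(1-\eps)n$ and $n-m\to\infty$, and the required relation \eqref{0.3}, $m_k=(1-\eps+o(1))n_k$, asks precisely for witnesses whose slope $m/n$ tends to the admissible boundary $1-\eps$. So the lemma is the assertion that high rate occurs along windows whose slope converges to a fixed value $1-\eps<1$, with scales and widths diverging; conditions \eqref{0.1} and \eqref{0.2} are then the rate bound and a pure separation requirement.

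First I would pin down the exponent. Since $\dim^{\eps}_A D\ge\dim^{\eps}_{MA}D$ and $\dim^{\eps}_{MA}D\to\dim_{MA}D>\alpha$ as $\eps\to0$, I can fix $\eps_0$ and $\rho_0>\alpha$ with $\dim^{\eps_0}_A D>\rho_0$. Negating the defining inequality and letting the admissible constant tend to $\infty$ produces windows $(m_j,n_j]$ with $m_j\le(1-\eps_0)n_j$, $n_j-m_j\to\infty$ (so $n_j\to\infty$) and $r(m_j,n_j)>\rho_0$. These have slope bounded away from $1$, but possibly tending to $0$, so on those windows whose slope is below a small fixed $\theta$ I would run a \emph{slope boost}: splitting at $\lceil\theta n_j\rceil$ and bounding the discarded coarse part trivially by rate $\le1$ gives $r(\lceil\theta n_j\rceil,n_j)\ge(\rho_0-\theta)/(1-\theta)=:\rho'$, which exceeds $\alpha$ once $\theta$ is small; the remaining windows already have rate $>\rho_0\ge\rho'$. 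After this step every witness has rate $\ge\rho'$, slope in $[\theta,1-\eps_0]\subset(0,1)$, and coarse order $\to\infty$.

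The heart of the argument is to push the slope out to a fixed value close to $1$ while keeping the width divergent. Here the naive idea of refining a good interval $I\in\iI_m$ to a good subinterval fails: passing to a finer dyadic interval divides the count by the branching factor, and since the target exponent is $<1$ the resulting rate drops below $\rho'$. The remedy is to partition the \emph{scale range} rather than the interval. For $a=p_0<\dots<p_s=b$ one has the submultiplicativity $\log_2\max_{I\in\iI_a}N_b(D,I)\le\sum_i\log_2\max_{I'\in\iI_{p_{i-1}}}N_{p_i}(D,I')$, so rate $\ge\rho'$ over $(a,b]$ forces rate $\ge\rho'$ over at least one sub-range $(p_{i-1},p_i]$. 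I would cut each boosted window into sub-ranges of length $\ell_j$ proportional to its coarse order (so that $\ell_j\to\infty$ thanks to the boost); then every resulting sub-range has slope in a fixed compact subinterval of $(0,1)$ contained in $(1-\tfrac{\eta}{1-\eta},\,1)$, and submultiplicativity hands me one of rate $\ge\rho'$, with width $\ell_j\to\infty$ and fine order $\to\infty$.

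Finally I would pass to a subsequence along which these slopes converge to some $c_\infty<1$ (they stay below $1$ by the upper end of the compact interval), set $\eps=1-c_\infty$, and verify the three conclusions: \eqref{0.1} holds because the rate is $\ge\rho'\ge\alpha+\eps$ provided $\theta$ and then $\eta$ are chosen small enough that $\rho'>\alpha$ with room exceeding $\eps\le\tfrac{\eta}{1-\eta}$; \eqref{0.3} holds because the slopes converge to $1-\eps$; and \eqref{0.2} is obtained for free by thinning to a fast-growing subsequence, since each $m_{k+1}=(1-\eps+o(1))n_{k+1}$ may be taken as large as desired. I expect the third step — forcing the boundary slope by partitioning the scale range, combined with the slope boost that makes the window length proportional to a diverging order — to be the main obstacle, as it is exactly where one must replace the lossy interval refinement by additivity of log-counts across scales.
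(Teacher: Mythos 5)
Your proposal is correct and takes essentially the same route as the paper: the decisive step in both is the submultiplicativity $N_i(D,I_m)\,N_n(D,I_i)\geq N_n(D,I_m)$ of dyadic covering counts across a split of the \emph{scale range} (exactly the replacement of lossy interval refinement that you single out), used to relocate a high-rate window to a slope $m/n$ bounded away from $0$ and $1$, after which a convergent subsequence of the slopes defines $\eps$. The paper performs the relocation by repeated bisection of $[m,n]$ with the stopping rule $1-\delta\leq m/n\leq 1-\delta/2$ rather than your one-shot equipartition preceded by a slope boost, but this is only a difference in bookkeeping.
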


\begin{proof} As $\dim_{MA} D>\alpha$, we have $\dim^{\delta}_{A} D>\alpha+\delta$ for some $\delta\in (0,1)$.
Thus there exist positive integers $m_k,n_k$ such that \eqref{0.1} and \eqref{0.2} hold with $\delta$ in place of $\eps$,
and $m_k\leq (1-\delta)n_k$. Fix $m=m_k$ and $n=n_k$ and assume that $m<i<n$.
Let $I_i\in \iI_i$ such that $N_{n}(D,I_i)=\max\{N_{n}(D,I): I\in \iI_i\}$. Then clearly
\[N_i(D,I_m) N_n(D,I_i)\geq N_{n}(D,I_m),\]
so either $N_i(D,I_m)\geq 2^{(\alpha+\delta)(i-m)}$ or
$N_n(D,I_i)\geq 2^{(\alpha+\delta)(n-i)}$.
Therefore we can divide the interval $[m,n]$ into two intervals of (almost) equal length and keep one
such that \eqref{0.1} still holds with $\delta$. We iterate this process and stop when our new interval $[m,n]$ satisfies
$m/n\geq 1-\delta$. This works for all large enough $m,n$ and we obtain a constant $c(\delta)<1$ such that $m/n\leq c(\delta)$.
By redefining $m_k,n_k$ and choosing a convergent subsequence of $m_k/n_k$ we may assume that $m_k/n_k\to 1-\eps$,
where $0<1-c(\delta)\leq \eps \leq \delta$. Then $m_k,n_k$ and $\eps\in (0,1)$ satisfy the above properties.
\end{proof}

\begin{definition} Let $\Sigma=\{0,1\}^{\N^+}$ and
let $\Sigma_{*}=\bigcup_{n=1}^{\infty}\Sigma(n)$, where $\Sigma(n)=\{0,1\}^n$.
For $\sigma \in \Sigma\cup \Sigma_{*}$ denote by $\sigma_i$ the $i$th coordinate of $\sigma$ and let
$|\sigma|$ be the length of $\sigma$.
Define $F\colon \Sigma \cup \Sigma_{*}\to [0,1]$ as
\[F(\sigma)=\sum_{i=1}^{|\sigma|} \sigma_i2^{-i},\]
and for all $n\in \N^+$ and $\sigma\in \Sigma(n)$ let
\[I(\sigma)=[F(\sigma),F(\sigma)+2^{-n}]\in \iI_n.\]
For each $S\subset \N^+$ and $n\in \N^+$ let
\begin{align*} \Sigma_{S}&=\{\sigma\in \Sigma: \sigma_i\in \{0,1\} \textrm{ if } i\in S \textrm{ and } \sigma_i=0 \textrm{ if } i\notin S\}, \\
S(n)&=S\cap \{1,\dots,n\}.
\end{align*}
For $\sigma \in \Sigma\cup \Sigma_{*}$ and integers $0\leq m<n\leq |\sigma|$ define \[\sigma(m,n)=(\sigma_{m+1},\dots, \sigma_n)\in \{0,1\}^{n-m} \quad \textrm{and} \quad  \sigma(n)=\sigma(0,n).\]
For $\Lambda \subset \Sigma \cup \Sigma_{*}$ let
\[\Lambda(m,n)=\{\sigma(m,n): \sigma\in \Lambda\}\subset \{0,1\}^{n-m} \quad \textrm{and} \quad  \Lambda(n)=\Lambda(0,n).\]
\end{definition}

\begin{proof}[Proof of Theorem~\ref{t:digit}]
Implication $\eqref{digi2} \Rightarrow \eqref{digi1}$ follows from Theorem~\ref{t:main}.

Now we prove $\eqref{digi1} \Rightarrow \eqref{digi2}$. Suppose that \eqref{digi1} holds.
By Lemma~\ref{l:ub} almost surely there is a
random $M\in \N^+$ such that
for all $n\geq M$ and $p\in \{0,\dots,2^n-1\}$ we have
\begin{equation} \label{eq:diam0} \diam B(I_{n,p})\leq n2^{-\alpha n}.
\end{equation}
%%%
Fix a sample path $B$ and $M\in \N^+$ with this property. Assume to the contrary that $\dim_{MA} D_S>\alpha$ and
$\dim_H D_S=\liminf_{n\to \infty} d_{n}(S)=s>0$,
in order to obtain a contradiction it is enough to construct a compact set $C\subset D_S$ such that $\dim_H B(C)<(1/\alpha) \dim_H C$.

First we construct $C$. By Lemma~\ref{l:tech} there exist positive integers $m_k,n_k$ and $\eps \in (0,1)$ such that
\begin{enumerate}
\item \label{1.1} $d_{m_k,n_k}(S)\geq \alpha+\eps$ for all $k\in \N^+$,
\item \label{1.2} $M<m_k<n_k<m_{k+1}$ for all $k\in \N^+$,
\item \label{1.3} $n_k/m_{k+1}=o(1)$,
\item \label{1.4} $m_k=(1-\eps+o(1))n_k$.
\end{enumerate}

As $\liminf_{n\to \infty} d_n(S)=s$, we can define positive integers $\ell_k$ and
\[T=S\setminus \bigcup_{k=1}^{\infty} (\ell_k,m_k]\subset S\]
such that
\begin{enumerate}[(i)]
\item \label{2.1} $\ell_k\leq m_k<n_k<\ell_{k+1}$ for all $k\in \N^+$,
\item \label{2.2} $\lim_{k\to \infty} d_{m_k}(T)=s$.
\end{enumerate}
Let $k\in \N^+$ and $\sigma\in \Sigma_{T}(m_k)$. Inequality \eqref{eq:diam0} and $m_k>M$ implies that \[\diam B(I(\sigma))\leq m_k2^{-\alpha m_k},\]
so $B(I(\sigma))$ can be covered by $m_k2^{\alpha(n_k-m_k)}+1\leq n_k2^{\alpha(n_k-m_k)}$ intervals
of $\iJ_{n_k}$. As $T\cap (m_k,n_k]=S\cap (m_k,n_k]$, property \eqref{1.1} yields that
\[|\Sigma_{T}(m_k,n_k)|\geq 2^{(\alpha+\eps)(n_k-m_k)}.\]
Therefore there exists a $J=J(\sigma)\in \iJ_{n_k}$ such that
\begin{align*} |\{\lambda \in \Sigma_{T}(n_k): \lambda(m_k)=\sigma,\ B(F(\lambda))\in J\}|&\geq (1/n_k)2^{\eps(n_k-m_k)} \\
&=2^{(\eps-o(1))(n_k-m_k)} \\
&=2^{(\eps^2+o(1))n_k},
\end{align*}
where we used \eqref{1.4} in the last line.
For all $k\in \N^+$ and $\sigma\in \Sigma_{T}(m_k)$ define $\Lambda_k(\sigma)$ and $p_k\in \N^+$ such that
\begin{align*}
\Lambda_k(\sigma)&\subset \{\lambda \in \Sigma_{T}(n_k): \lambda(m_k)=\sigma,\ B(F(\lambda))\in J(\sigma)\}, \\
|\Lambda_k(\sigma)|&=p_k=2^{(\eps^2+o(1))n_k}.
\end{align*}
Define $\Lambda\subset \Sigma$ such that \[\Lambda=\{\sigma \in \Sigma_T: \sigma(n_k)\in \Lambda_k(\sigma(m_k)) \textrm{ for all } k\in \N^+\}.\]
Define the compact set $C\subset D_T$ as
\[C=\{F(\sigma): \sigma \in \Lambda\}=\bigcap_{n=1}^{\infty} \left( \bigcup_{\sigma \in \Lambda(n)} I(\sigma) \right).\]
Let
\[V=T\setminus \bigcup_{k=1}^{\infty} (m_k,n_k]\subset T,\]
first we prove an upper bound for $\dim_H B(C)$. Let $\sigma \in \Lambda(m_k)$.
The definition of $\Lambda_k(\sigma)$ and \eqref{eq:diam0} imply that
$\bigcup_{\lambda\in \Lambda_k(\sigma)} B(I(\lambda))$ can be covered by $2n_k+1$ intervals
of $\iJ_{n_k}$. By \eqref{1.3} we have
\[|\Lambda(m_k)|=2^{(1+o(1))|V(m_k)|},\]
so for any $n_k\leq n\leq m_{k+1}$ the image $B(C)$ can be covered by
\begin{equation*}  |\Lambda(m_k)|(2n_k+1)2^{|V\cap (n_k,n]|}=2^{(d_{n}(V)+o(1))n}
\end{equation*}
intervals of $\iJ_{n}$ having diameter $2^{-\alpha n}$.
Let $W=\N^+\setminus \bigcup_{k=1}^{\infty} (m_k,n_k)$, clearly
for all $m_k<n<n_k$ we have $d_{n_k}(V) \leq d_{n}(V)$. Therefore, as the lower Minkowski dimension is an upper bound for
the Hausdorff dimension, we obtain that
\begin{align}
\begin{split} \label{eq:1-eps}
\dim_H B(C)&\leq \liminf_{n\in W} \frac{\log 2^{(d_{n}(V)+o(1))n}}{\log 2^{\alpha n}}\\
&=(1/\alpha)\liminf_{n\in W}  d_{n}(V) \\
&=(1/\alpha)\liminf_{n\to \infty}  d_{n}(V).
\end{split}
\end{align}
%%%

By \eqref{eq:1-eps} it is enough to show that
\begin{equation} \label{eq:1+eps} \dim_H C> \liminf_{n\to \infty}  d_{n}(V).
\end{equation}
Define a Borel probability measure $\mu$ as follows. For all $k\in \N^+$ and $\sigma \in \Lambda(n_k)$ let
\[\mu(I(\sigma))=\frac{1}{|\Lambda(n_k)|}=\frac{1}{2^{|V(n_k)|}\prod_{i=1}^{k} p_i}.\]
This uniquely defines a $\mu$ with $\supp(\mu)=C$. By Frostman's Lemma we have
\begin{equation} \label{eq:Fr} \dim_H C\geq \sup\{c: \mu(I(\sigma))\leq 2^{-(c+o(1))n} \textrm{ for all } \sigma\in \Lambda(n)\},
\end{equation}
hence in order to find a lower bound for $\dim_H C$ we will estimate $\mu(I(\sigma))$ from above.
Let $n\in \N^+$ and let $\sigma \in \Lambda(n)$ such that $m_k<n\leq m_{k+1}$.

First assume that $m_k<n\leq m_k(1+\eps)$.
Then \eqref{1.3} and \eqref{2.2} yield that
\begin{align}
\begin{split} \label{eq:mu1}
\mu(I(\sigma))&\leq \mu(I(\sigma(m_k)))\leq 2^{-|V(m_k)|}=2^{-(1+o(1))|T(m_k)|}\\
&=2^{-(s+o(1))m_k}\leq 2^{-(s/(1+\eps)+o(1))n}.
\end{split}
\end{align}

Now suppose that $m_k(1+\eps)<n< n_k$. Clearly we have \[|\{\lambda\in \Lambda(n_k): \lambda(n)=\sigma\}|\leq 2^{n_k-n}.\]
Properties \eqref{1.3} and \eqref{2.2}, the definition of $p_k$, and \eqref{1.4} imply that
\[|\Lambda(n_k)|\geq 2^{|V(m_k)|}p_k=2^{(s+o(1))m_k} \cdot 2^{(\eps^2+o(1))n_k}=2^{(s(1-\eps)+\eps^2+o(1))n_k}.\]
Our assumption and \eqref{1.4} yield that
\[n_k\leq (1/(1-\eps^2)+o(1))n.\]
Thus the above three inequalities imply that
\begin{equation} \label{eq:mu2}
\mu(I(\sigma))\leq \frac{2^{n_k-n}}{|\Lambda(n_k)|}\leq 2^{n_k(1-\eps^2-s(1-\eps)+o(1))-n}\leq 2^{-(s/(1+\eps)+o(1))n}.
\end{equation}

Finally, assume that $n_k\leq n\leq m_{k+1}$, then the definition of $p_k$ implies that
\begin{equation} \label{eq:mu3} \mu(I(\sigma))=\frac{1}{2^{|V(n)|}\prod_{i=1}^{k} p_i}\leq \frac{1}{2^{|V(n)|}p_k}=2^{-(d_{n}(V)n+(\eps^2+o(1))n_k)}.
\end{equation}

Inequalities \eqref{eq:mu1}, \eqref{eq:mu2}, \eqref{eq:mu3}, and Frostman's lemma yield that
\begin{equation} \label{eq:Clb} \dim_H C\geq \min \left\{\frac{s}{1+\eps},~ \liminf_{k\to \infty} \min_{n\in [n_k,m_{k+1}]} \left(d_{n}(V)+\eps^2 \frac{n_k}{n}\right)\right\}.
\end{equation}

Properties \eqref{2.2}, \eqref{1.3}, and \eqref{1.4} imply that
\begin{equation} \label{eq:limdn}
\liminf_{n\to \infty} d_{n}(V)\leq \liminf_{k\to \infty} d_{n_k}(V)=\liminf_{k\to \infty} \frac{(s+o(1))m_k}{n_k}=s(1-\eps).
\end{equation}
Inequalities \eqref{eq:Clb}, \eqref{eq:limdn} and $s/(1+\eps)>s(1-\eps)$ yield that it is enough to prove
for \eqref{eq:1+eps} that
\begin{equation} \label{eq:dnv} \liminf_{k\to \infty}  \min_{n\in [n_k,m_{k+1}]} \left(d_{n}(V)+\eps^2 \frac{n_k}{n}\right)>\liminf_{n\to \infty} d_{n}(V).
\end{equation}
Let $n_k\leq n\leq m_{k+1}$. First assume that $\ell_{k+1}\leq n\leq m_{k+1}$. As $V\cap (\ell_{k+1},m_{k+1}]=\emptyset$,
properties \eqref{1.3} and \eqref{2.2} imply that
\begin{equation} \label{eq:lnm}
d_{n}(V)\geq d_{m_{k+1}}(V)=(1+o(1))d_{m_{k+1}}(T)=s+o(1).
\end{equation}
Now suppose that $2n_k/(\eps s)\leq n\leq \ell_{k+1}$. Then $V\cap (n_k,n]=S\cap (n_k,n]$ and $\liminf_{n\to \infty} d_n(S)=s$ imply that
\begin{equation} \label{eq:nnl}
d_{n}(V)\geq d_n(S)-n_k/n\geq s(1-\eps/2)+o(1).
\end{equation}
Finally, assume that $n_k\leq n\leq 2n_k/(\eps s)$. Then clearly
\begin{equation} \label{eq:nnn}
d_n(V)+\eps^2 \frac{n_k}{n}\geq d_n(V)+\eps^3 s/2.
\end{equation}
Inequalities \eqref{eq:lnm}, \eqref{eq:nnl}, \eqref{eq:nnn}, and \eqref{eq:limdn} imply that
\[\liminf_{k\to \infty}  \min_{n\in [n_k,m_{k+1}]} \left(d_{n}(V)+\eps^2 \frac{n_k}{n}\right)\geq \liminf_{n\to \infty} d_{n}(V)+\eps^3 s/2.\]
Hence \eqref{eq:dnv} holds, and the proof is complete.
\end{proof}

\subsection{Self-similar sets}

The goal of this subsection is to prove Theorem~\ref{t:ss}.

\begin{proof}[Proof of Theorem~\ref{t:ss}]
Implication $\eqref{i:ss2} \Rightarrow \eqref{i:ss1}$ follows from Theorem~\ref{t:main}.

Now we prove $\eqref{i:ss1} \Rightarrow \eqref{i:ss2}$. Assume to the contrary that $\dim_{MA} D>\alpha$ and \eqref{i:ss1} hold.
By Lemma~\ref{l:ub} almost surely there is a
random $M\in \N^+$ such that
for all $n\geq M$ and $p\in \{0,\dots,2^n-1\}$ we have
\begin{equation} \label{eq:dia} \diam B(I_{n,p})\leq 2^{-(\alpha+o(1))n}.
\end{equation}
Fix a sample path $B$ and $M\in \N^+$ with this property. Let us recall Definition~\ref{d:Nn}.
We show that there are positive integers
$m_k,n_k,\ell_k,d_k$ and $\eps\in (0,1)$ such that for all $k\in \N^+$
\begin{enumerate}[(1)]
\item \label{3.1} there is an $I_k\in \iI_{m_k}$ such that $N_{n_k}(D,I_k)\geq 2^{(\alpha+\eps+o(1))(n_k-m_k)}$,
\item \label{3.2} $m_k=(1-\eps+o(1))n_k$,
\item \label{3.3} $\sum_{i=1}^{k-1} n_i=o(n_{k})$ and $m_1>M$,
\item \label{3.4}  $n_{k}=\ell_{k} d_{k}$, where $d_1=n_1$, $d_2=n_2$, and $d_k=n_{k-2}=o(n_{k-1})$ for $k>2$.
\end{enumerate}
Indeed, $\dim_{MA} D>\alpha$ and Lemma~\ref{l:tech} imply \eqref{3.1} and \eqref{3.2}. Property~\eqref{3.3} may be assumed by passing to a subsequence.
Adding at most $n_{k-2}$ to $n_k$ does not change the earlier asymptotes, so we may suppose that $n_k$ is divisible by $n_{k-2}$ for $k>2$, so
$\ell_1=\ell_2=1$ and $\ell_k=n_k/n_{k-2}$ for $k>2$ satisfies \eqref{3.4}.

Properties \eqref{3.1} and \eqref{3.2} imply that $\overline{\dim}_M D\geq (\alpha+\eps)\eps>\eps^2$.
Let $\overline{\dim}_M D=t>0$. It is enough to construct a compact set $C\subset D$
such that $\dim_H B(C)\leq t/(2\alpha)$ and $\dim_H C\geq t/(2-\eps^2)>t/2$.

First we construct $C$. Assume that $D=\bigcup_{i=1}^{k_0} f_i(D)$, where $f_i$ are contractive similarities of $\R$.
Let $r=\min\{\Sim(f_i): i\leq k_0\}$, where $\Sim(f)$ denotes the similarity ratio of $f$.
It is easy to show that for each $x\in D$ and $R\in (0,1)$ there exists a similarity $f\colon D\to B(x,R)\cap D$ such that
$r R\leq \Sim(f)\leq R$.

First we prove that for all $k\in \N^+$ we can define $p_k\in \N^+$ and similarities $\phi^{k}_i\colon D\to D\cap I_k$
for $1\leq i\leq p_k$ such that
\begin{enumerate}[(i)]
\item \label{4.1} $p_k=2^{(\alpha+\eps+o(1))(n_k-m_k)}$,
\item \label{4.2} $r2^{-n_k}\leq \Sim(\phi^{k}_i)\leq 2^{-n_k}$ for all $i$,
\item \label{4.3} $\dist(\phi^k_i(D),\phi^k_j(D))\geq 2^{-n_k}$ for all $i\neq j$.
\end{enumerate}
Indeed, by \eqref{3.1} for each $k$ there exists $p_k$ with \eqref{4.1} such that
there are points $\{x^{k}_i\}_{i=1}^{p_k}$ in $D\cap I_k$ with
$|x^{k}_i-x^{k}_j|\geq 2^{-n_k+2}$ for all $i\neq j$ and $\dist(\{x^{k}_j\},\partial (I_k))\geq 2^{-n_k}$.
Hence for each $k\in \N^+$ and $1\leq i\leq p_k$ there is a similarity
$\phi^k_i\colon D\to D\cap B(x^k_i,2^{-n_k})$ satisfying property \eqref{4.2}. Clearly \eqref{4.3} holds, too.

By \cite[Theorem~4]{F} we have $\underline{\dim}_M D=\overline{\dim}_M D$. Therefore, similarly as above,
we can define positive integers $q_k$ and similarities $\psi^{k}_i\colon D\to D$ for $1\leq i\leq q_k$ such that
\begin{enumerate}[(A)]
\item \label{5.1} $q_k=2^{(t+o(1))d_k}$,
\item \label{5.2} $r2^{-d_k}\leq \Sim(\psi^{k}_i)\leq 2^{-d_k}$ for all $i$,
\item \label{5.3} $\dist(\psi^k_i(D),\psi^k_j(D))\geq 2^{-d_k}$ for all $i\neq j$.
\end{enumerate}
For all $k\in \N^+$ and $\sigma=(j_1,\dots, j_{\ell_k})\subset \{1,\dots,q_k\}^{\ell_k}$ define similarities $\Psi^{k}_{\sigma} \colon D\to D$ as
\[\Psi^{k}_{\sigma}=\psi^k_{j_1} \circ \dots \circ \psi^k_{j_{\ell_k}}.\]

Assume that $k\in \N^{+}$, $\theta=(\sigma_1,i_1,\sigma_2, \dots,i_{k-1},\sigma_k)$, and $i\in \{1,\dots, p_k\}$ are given, where
\[(\sigma_1,\dots,\sigma_{k})\in \prod_{j=1}^{k} \{1,\dots,q_{j}\}^{\ell_j} \quad \textrm{and} \quad
(i_1,\dots,i_{k-1})\in \prod_{j=1}^{k-1} \{1,\dots,p_j\}.\]
Define the similarities $\Phi_{\theta}, \Phi_{\theta i}\colon D\to D$ such that
\begin{equation*} \Phi_{\theta}= \Psi^1_{\sigma_1} \circ \phi^1_{i_1} \circ \dots \circ \phi^{k-1}_{i_{k-1}} \circ \Psi^k_{\sigma_k}
\quad \textrm{and} \quad \Phi_{\theta i}=\Phi_{\theta}\circ \phi^{k}_{i}.
\end{equation*}
\begin{statement} \label{st} For $\theta=(\sigma_1,i_1,\sigma_2, \dots,i_{k-1},\sigma_k)$
we can define $\Sigma_\theta\subset \{1,\dots,p_k\}$ such that
\begin{equation}\label{Sdef} |\Sigma_\theta|=s_{k}=2^{(\eps+o(1))(n_{k}-m_{k})}=2^{(\eps^2+o(1))n_{k}},
\end{equation}
and the similarities $\Phi_{\theta i}$ satisfy
\begin{equation} \label{eq:uni}
\diam \left(\bigcup_{i\in \Sigma_{\theta}} B(\Phi_{\theta i}(D))\right)\leq 2^{-(2\alpha+o(1))n_{k}}.
\end{equation}
\end{statement}
\begin{proof}[Proof of Statement~\ref{st}]
As $\phi^{k}_i(D)\subset I_k$, for all $1\leq i\leq p_k$ we have
\begin{equation} \label{eq:Pik} \Phi_{\theta i} (D)\subset \Phi_{\theta}(I_k).
\end{equation}
Properties \eqref{4.2}, \eqref{5.2}, \eqref{3.4}, and \eqref{3.3} imply that
\begin{equation} \label{eq:Phth} \diam \Phi_{\theta}(I_k)\leq 2^{-2(n_1+\dots+n_{k-1})-n_k-m_k}=
2^{-(1+o(1))(n_k+m_k)}.
\end{equation}
Then \eqref{eq:Pik}, \eqref{eq:Phth}, and \eqref{eq:dia} yield that
\[\diam \left(\bigcup_{i=1}^{p_k} B(\Phi_{\theta i}(D))\right)\leq \diam B(\Phi_{\theta}(I_k))\leq 2^{-(\alpha+o(1))(n_k+m_k)}.\]
Thus $\bigcup_{i=1}^{p_k} B(\Phi_{\theta i}(D))$ can intersect at most $2^{(\alpha+o(1))(n_k-m_k)}$ value intervals of
$J_{2n_k}$. Hence \eqref{4.1} and \eqref{3.2} imply that there is an interval $J\in \iJ_{2n_k}$ such that
\begin{align*} |\{1\leq i \leq p_k: B(\Phi_{\theta i}(D))\cap J\neq \emptyset\}|&\geq p_k/2^{(\alpha+o(1))(n_k-m_k)} \\
&\geq 2^{(\eps+o(1))(n_k-m_k)} \\
&=2^{(\eps^2+o(1))n_{k}}.
\end{align*}
Choose $\Sigma_{\theta}\subset \{1\leq i \leq p_k: B(\Phi_{\theta i}(D))\cap J\neq \emptyset\}$ according to \eqref{Sdef},
we need to prove \eqref{eq:uni}. Similarly as above for all $1\leq i\leq p_k$ we have
\[\diam \Phi_{\theta i}(D)\leq 2^{-2(n_1+\dots +n_k)}=2^{-(2+o(1))n_k}.\]
Therefore by \eqref{eq:dia} for all for all $1\leq i\leq p_k$ we obtain
\begin{equation} \label{eq:BD} \diam B(\Phi_{\theta i}(D))\leq 2^{-(2\alpha+o(1))n_k}.
\end{equation}
As the images $\{B(\Phi_{\theta i}(D))\}_{i\in \Sigma_{\theta}}$ intersect the same $J$ which has length
$2^{-2\alpha n_k}$, inequality \eqref{eq:BD} implies \eqref{eq:uni}. The proof of the statement is complete.
\end{proof}

Now we return to the proof of Theorem~\ref{t:ss}. Define
\[\Theta\subset \prod_{k=1}^{\infty} \left(\{1,\dots,q_k\}^{\ell_k}\times \{1,\dots,p_k\}\right)\quad \textrm{as}\]
\[\Theta=\{(\sigma_1,i_1,\sigma_2,\dots): i_k\in \Sigma_{\sigma_1 i_1 \dots i_{k-1} \sigma_k} \textrm{ for all } k\in \N^+\}.\]
For all $k\in \N^+$ let \[\Theta(k)=\{\theta_k=(\sigma_1,i_1,\dots,\sigma_k,i_k): \exists \theta \in \Theta \textrm{ which extends } \theta_k\}.\]
Define the compact set
\[C=\bigcap_{k=1}^{\infty} \left(\bigcup_{\theta\in \Theta(k)} \Phi_{\theta}(D)\right).\]
Now we prove that $\dim_H B(C)\leq t/(2\alpha)$. By \eqref{4.1} and \eqref{3.3} we have
\begin{equation} \label{eq:sump} \log_2 (p_1 \cdots p_{k-1})\leq (\alpha+\eps+o(1))\sum_{i=1}^{k-1} n_i=o(n_k).
\end{equation}
By \eqref{5.1}, \eqref{3.4}, and \eqref{3.3} we have
\begin{equation} \label{eq:sumq} \log_2 (q_1^{\ell_1} \cdots q_{k}^{\ell_k})=(t+o(1))\sum_{i=1}^{k} n_i=(t+o(1))n_k.
\end{equation}
By \eqref{eq:uni} we obtain that $B(C)$ can be covered by $(\prod_{i=1}^{k-1} q_i^{\ell_i}p_i)q_k^{\ell_k}$ intervals of
length $2^{-(2\alpha+o(1))n_k}$. Since the lower Minkowski dimension is an upper bound for the Hausdorff dimension,
asymptotes \eqref{eq:sump} and \eqref{eq:sumq} imply that 
\[\dim_H B(C)\leq \liminf_{k\to \infty} \frac{\log_2 (\prod_{i=1}^{k-1} p_i \prod_{i=1}^{k} q_i^{\ell_i})}{\log_2 2^{(2\alpha+o(1))n_k}}=t/(2\alpha).\]
Finally, we show that $\dim_H C\geq t/(2-\eps^2)>t/2$. Assume that $k\in \N^+$, $\theta\in \Theta(k)$, $0\leq \ell\leq \ell_{k+1}$ and
$(j_1,\dots, j_\ell)\in \{1,\dots,q_{k+1}\}^{\ell}$. Then define $\Phi_{\theta j_1\dots j_\ell}\colon D \to D$ as
\[\Phi_{\theta j_1\dots j_\ell}=\Phi_{\theta}\circ \psi^{k+1}_{j_1} \circ \dots \circ \psi^{k+1}_{j_\ell}.\]
Each $N\in \N$ can be uniquely written as $N=\left(\sum_{i=1}^k (\ell_i+1)\right)+\ell$, where $k\in \N$ and $0\leq \ell \leq \ell_{k+1}$
depend on $N$. Consider the cover
\[C\subset \bigcup_{\theta\in \Theta(k)} \bigcup_{j_1=1}^{q_{k+1}}\dots \bigcup_{j_\ell=1}^{q_{k+1}} \Phi_{\theta j_1\dots j_\ell}(D),\]
we call $C\cap \Phi_{\theta j_1\dots j_\ell}(D)$ the \emph{elementary pieces of $C$ of level N}.
Every elementary piece of $C$ of level $N-1$ has $c_N$ children, where
\[c_N=
\begin{cases} s_k & \textrm{ if } \ell=0, \\
q_{k+1} & \textrm{ if } \ell>0.
\end{cases}\]
Inequalities \eqref{4.2}, \eqref{4.3}, \eqref{5.2}, \eqref{5.3}, and
\eqref{3.4} imply that the distance between any two elementary pieces of $C$ of level $N$ is at least
\[\eps_N=r^{N} 2^{-2(n_1+\dots +n_k)-\ell d_{k+1}}.\]
We have $\eps_N \searrow 0$ as $N\to \infty$, so \cite[Example~4.6]{F2}  implies that
\begin{equation} \label{eq:lbound} \dim_H C\geq \liminf_{N\to \infty} \frac{\log_2(c_1\cdots c_{N-1})}{-\log_2(c_N \eps_N)}.
\end{equation}
Hence we need to bound the above limes inferior from below.
We use the notation $a_n\sim b_n$ if $a_n/b_n\to 1$ as $n\to \infty$.
By \eqref{Sdef} and \eqref{5.1} we obtain that
\[\log_2 c_N \sim
\begin{cases} \eps^2n_k & \textrm{ if } \ell=0, \\
td_{k+1} & \textrm{ if } \ell>0.
\end{cases}\]
Hence asymptotes \eqref{3.3} and \eqref{3.4} yield that
\[\log_2(c_1 \cdots c_{N-1})\sim
\begin{cases} tn_k & \textrm{ if } \ell=0, \\
(t+\eps^2)n_k+(\ell-1)td_{k+1} & \textrm{ if } \ell>0.
\end{cases}\]
By \eqref{3.3} and \eqref{3.4} we have $\sum_{i=1}^{k} n_i \sim n_k$ and $N=o(n_k+\ell d_{k+1})$, so
\[\log_2 \eps_N=N \log_2 r-2(n_1+\dots +n_k)-\ell d_{k+1} \sim -(2n_k+\ell d_{k+1}).\]
Therefore \eqref{eq:lbound} yields that
\begin{equation} \label{eq:HC}
\dim_H C\geq \liminf_{k\to \infty}\min\left\{\frac{t}{2-\eps^2}, \min_{1\leq \ell \leq \ell_{k+1}} \frac{(t+\eps^2)n_k+(\ell-1)td_{k+1}}{2n_k+\ell d_{k+1}-td_{k+1}}\right\}.
\end{equation}
It is easy to check that for all positive numbers $a,b,c,d$ we have
\begin{equation*} \label{eq:abcd}
\frac{a+b}{c+d}\geq \min\left\{a/c, b/d\right\},
\end{equation*}
and applying this for $a=(t+\eps^2)n_k-td_{k+1}$, $b=\ell t d_{k+1}$, $c=2n_k-td_{k+1}$,
and $d=\ell d_{k+1}$ together with $d_{k+1}=o(n_k)$ implies that for all $\ell>0$ we have
\begin{equation} \label{min}
\frac{(t+\eps^2)n_k+(\ell-1)td_{k+1}}{2n_k+\ell d_{k+1}-td_{k+1}}\geq \min\left\{\frac{(t+\eps^2)n_k-td_{k+1}}{2n_k-td_{k+1}}, t \right\}\sim \frac{t+\eps^2}{2},
\end{equation}
where we used that $t>\eps^2$. Then \eqref{eq:HC} and \eqref{min} yield that
\[\dim_H C\geq \min\left\{\frac{t}{2-\eps^2}, \frac{t+\eps^2}{2} \right\}=\frac{t}{2-\eps^2},\]
and the proof is complete.
\end{proof}

\section{A restriction theorem for fractional Brownian motion} \label{s:restriction}

The main goal of this section is to give a new proof for Theorem~\ref{t:ABMP} based on
Theorem~\ref{t:main}. First we need some preparation.

\begin{definition} Let $\gamma\in (0,1)$, we construct a random compact set $\Gamma(\gamma)\subset [0,1]$ as follows. We keep each interval $I\in \iI_1$ with probability $p=2^{-\gamma}$. Let $\Delta_1\subset \iI_1$ be the the collection of kept intervals and let $S_1=\bigcup \Delta_1$ be their union. If $\Delta_n\subset \iI_n$ and $S_n$ are already defined, then we keep every interval $I\in \iI_{n+1}$ for which $I\subset S_n$ independently with probability $p$. We denote by $\Delta_{n+1}\subset \iI_{n+1}$ the collection of kept intervals and by $S_{n+1}=\bigcup \Delta_{n+1}$ their union. We define a \emph{percolation limit set} as
\[\Gamma(\gamma)=\bigcap_{n=1}^{\infty} S_n.\]
\end{definition}

The following theorem is due to Hawkes \cite[Theorem~6]{H}.

\begin{theorem}[Hawkes] \label{t:Hawkes} Let $\gamma\in (0,1)$ and let $C\subset [0,1]$ be a compact set with $\dim_H C>\gamma$. Then
$\dim_H (C\cap \Gamma(\gamma))>0$ with positive probability.
\end{theorem}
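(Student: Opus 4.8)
The statement is a classical intersection theorem for fractal percolation, and the plan is to prove it by the second moment (energy) method, transferring a Frostman measure on $C$ onto the percolation limit set. Write $p=2^{-\gamma}$, so that a fixed (non-dyadic) point $x\in[0,1]$ satisfies $\P(x\in S_n)=p^n$, since $x$ survives to level $n$ iff the nested intervals of $\iI_1,\dots,\iI_n$ containing it are all kept. As $\dim_H C>\gamma$, fix $t$ with $\gamma<t<\dim_H C$; by Frostman's lemma there is a Borel probability measure $\mu$ supported on $C$ with finite $t$-energy, hence (as $[0,1]$ has diameter $1$) with finite $u$-energy for every $u\le t$. The first step is to spread $\mu$ over the percolation by the random measures $\mu_n(A)=p^{-n}\,\mu(A\cap S_n)$ on $C$. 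The normalisation is chosen so that $\E[\mu_n(A)]=p^{-n}\int_A\P(x\in S_n)\,d\mu(x)=\mu(A)$; in particular $\E[\mu_n([0,1])]=1$.

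The heart of the argument is a second moment computation. For $x\ne y$ let $k(x,y)$ be the largest $n$ for which $x,y$ lie in a common interval of $\iI_n$; the two points then share kept ancestors up to level $k(x,y)$ and thereafter evolve through disjoint, independent subtrees, so that
\[
\P(x\in S_n,\ y\in S_n)=p^{k(x,y)}\cdot p^{\,2(n-k(x,y))}=p^{\,2n-k(x,y)}.
\]
Hence $\E[\mu_n([0,1])^2]=p^{-2n}\int\int p^{\,2n-k(x,y)}\,d\mu(x)\,d\mu(y)=\int\int p^{-k(x,y)}\,d\mu(x)\,d\mu(y)$. The common interval of order $k(x,y)$ has length $2^{-k(x,y)}\ge|x-y|$, whence $p^{-k(x,y)}=2^{\gamma k(x,y)}\le|x-y|^{-\gamma}$, so $\E[\mu_n([0,1])^2]\le\int\int|x-y|^{-\gamma}\,d\mu(x)\,d\mu(y)<\infty$ uniformly in $n$. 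The same calculation with the extra factor $|x-y|^{-s}$ in the integrand gives, for any $s>0$,
\[
\E\!\left[\int\int|x-y|^{-s}\,d\mu_n(x)\,d\mu_n(y)\right]\le\int\int|x-y|^{-(s+\gamma)}\,d\mu(x)\,d\mu(y),
\]
which is finite whenever $s+\gamma\le t$, i.e.\ for every $s\le t-\gamma$.

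Next I would produce the limiting measure. For each fixed dyadic interval $I$ the sequence $\mu_n(I)$ is a nonnegative martingale for the filtration generated by the percolation (given the configuration up to level $n$, each admissible child is kept with probability $p$, exactly cancelling the factor $p^{-(n+1)}$), so it converges almost surely; a routine consistency argument then yields a random Borel measure $\mu_\infty$ with $\mu_n\to\mu_\infty$ weakly. Since $\mu_n(I)=0$ once $I\cap S_N=\emptyset$ for some $N$, the limit $\mu_\infty$ is carried by $C\cap\bigcap_n S_n=C\cap\Gamma(\gamma)$. The uniform second moment bound makes $(\mu_n([0,1]))_n$ an $L^2$-bounded martingale, so it converges in $L^2$, giving $\E[\mu_\infty([0,1])]=1$ and, by the Paley--Zygmund inequality, $\P(\mu_\infty([0,1])>0)\ge 1/\E[\mu_\infty([0,1])^2]>0$. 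Finally, fixing $s$ with $0<s<t-\gamma$, weak lower semicontinuity of the energy together with Fatou's lemma gives $\E[\int\int|x-y|^{-s}\,d\mu_\infty\,d\mu_\infty]\le\int\int|x-y|^{-(s+\gamma)}\,d\mu\,d\mu<\infty$, so $\mu_\infty$ has finite $s$-energy almost surely. On the positive-probability event $\{\mu_\infty([0,1])>0\}$ we thus have a nonzero measure on $C\cap\Gamma(\gamma)$ of finite $s$-energy, and the energy form of Frostman's theorem yields $\dim_H(C\cap\Gamma(\gamma))\ge s>0$.

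The main obstacle is organising the passage to the limit measure correctly: one must verify the martingale property and the weak convergence $\mu_n\to\mu_\infty$, identify $\supp\mu_\infty\subset C\cap\Gamma(\gamma)$, and justify interchanging the limit with the (singular) energy integral, which is where lower semicontinuity of the kernel — or a truncation $|x-y|^{-s}\wedge R$ followed by monotone convergence — is needed. The cancellations in the moment computations, by contrast, rely only on the identity $\P(x,y\in S_n)=p^{\,2n-k(x,y)}$ and the estimate $p^{-k(x,y)}\le|x-y|^{-\gamma}$, the latter being exactly the point where the dyadic splitting level is converted into Euclidean distance.
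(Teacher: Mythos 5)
Your argument is correct, but note that the paper does not prove this statement at all: it is quoted as \cite[Theorem~6]{H} and used as a black box, so there is nothing internal to compare against. What you have written is a complete, self-contained proof by the standard second-moment (energy) method for fractal percolation, and it in fact yields the sharper quantitative conclusion $\dim_H(C\cap\Gamma(\gamma))\geq s$ for every $s<\dim_H C-\gamma$ with positive probability, which is the content of Hawkes' original result. The key identities check out: the martingale property of $\mu_n(A)=p^{-n}\mu(A\cap S_n)$ follows from $\P(x\in S_{n+1}\mid \iF_n)=p\,\mathbf{1}\{x\in S_n\}$; the joint survival formula $\P(x,y\in S_n)=p^{2n-k(x,y)}$ is an equality only for $n\geq k(x,y)$ (for $n<k(x,y)$ the probability is $p^n\leq p^{2n-k(x,y)}$, so it remains a valid upper bound, which is all you use); and the conversion $p^{-k(x,y)}\leq |x-y|^{-\gamma}$ is exactly right since the common dyadic ancestor of order $k(x,y)$ has length $2^{-k(x,y)}\geq|x-y|$. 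Two small points worth making explicit in a final write-up: the Frostman measure $\mu$ is non-atomic (finite $t$-energy with $t>0$), so the diagonal and the countably many dyadic points contribute nothing to the double integrals; and the two almost-sure/positive-probability events $\{I_s(\mu_\infty)<\infty\}$ and $\{\mu_\infty([0,1])>0\}$ must be intersected before invoking the energy criterion, which is immediate since the former has probability one. With those remarks the proof is complete and is a legitimate substitute for the citation.
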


The following theorem follows from a result of Athreya \cite[Theorem~4]{At}.

\begin{theorem}[Athreya] \label{t:A} Let $\{Z_n\}_{n\geq 1}$ be a Galton-Watson branching process such that $\E Z_1=m>1$ and $\E e^{\theta Z_1}<\infty$ for some $\theta>0$. Then there exist $c_1,c_2\in \R^+$ such that for all $n\in \N^+$ and $k>0$ we have
\[\P(Z_n\geq km^n)\leq c_1e^{-c_2k}.\]
\end{theorem}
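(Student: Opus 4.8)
The plan is to control the tail via the scaled moment generating function together with the branching recursion, exploiting the fact that the contraction factor $1/m<1$ entering the argument at each generation can absorb the quadratic error coming from the offspring distribution. Write $f(s)=\E s^{Z_1}=\sum_{j}\P(Z_1=j)\,s^j$ for the offspring generating function; then $f(1)=1$, $f'(1)=m$, and the hypothesis $\E e^{\theta Z_1}=f(e^{\theta})<\infty$ forces $f$ to be a convergent power series on $[0,e^{\theta})$, hence real-analytic near $1$. In particular $f''$ is bounded by some finite $K$ on an interval $[1,1+\delta]$ with $1+\delta<e^{\theta}$, giving the Taylor estimate $f(1+u)\leq 1+mu+\tfrac{K}{2}u^2$ for $0\leq u\leq\delta$. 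Assuming $Z_0=1$, the generating function of $Z_n$ is the $n$-fold iterate of $f$, and a short computation shows that the scaled transform $M_n(\lambda):=\E\exp(\lambda Z_n/m^n)$ obeys
\[ M_{n+1}(\lambda)=f\bigl(M_n(\lambda/m)\bigr),\qquad M_0(\lambda)=e^{\lambda}. \]

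The heart of the argument is to show that $M_n(\lambda_0)$ stays bounded in $n$ for some fixed $\lambda_0>0$; once this is achieved, Markov's inequality gives $\P(Z_n\geq km^n)=\P(Z_n/m^n\geq k)\leq e^{-\lambda_0 k}M_n(\lambda_0)\leq c_1 e^{-c_2 k}$ with $c_2=\lambda_0$, which is exactly the claim. To obtain the uniform bound I would look for a \emph{super-solution}: a function $\bar G$ on $[0,\lambda_0]$ with $\bar G(\lambda)\geq e^{\lambda}$ and $\bar G(\lambda)\geq f(\bar G(\lambda/m))$ for all $\lambda\leq\lambda_0$. Given such a $\bar G$, an immediate induction on $n$, using monotonicity of $f$ and the recursion above, yields $M_n(\lambda)\leq\bar G(\lambda)$ for every $n$ and every $\lambda\leq\lambda_0$, and in particular $M_n(\lambda_0)\leq\bar G(\lambda_0)=:c_1<\infty$.

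It remains to exhibit $\bar G$, and here supercriticality $m>1$ is essential. I would try the quadratic ansatz $\bar G(\lambda)=1+c\lambda+d\lambda^2$. The condition $\bar G\geq e^{\lambda}$ holds on a small interval once $c\geq 2$ and $d\geq 1$. For the second condition, writing $\bar G(\lambda/m)=1+u$ with $u=c\lambda/m+d\lambda^2/m^2$ and applying the Taylor estimate gives
\[ f\bigl(\bar G(\lambda/m)\bigr)\leq 1+c\lambda+\frac{d}{m}\lambda^2+\frac{K}{2}\Bigl(\frac{c\lambda}{m}\Bigr)^2(1+o(1)), \]
so the required inequality $f(\bar G(\lambda/m))\leq\bar G(\lambda)$ reduces to $\tfrac{d}{m}+\tfrac{Kc^2}{2m^2}(1+o(1))\leq d$, i.e. $d\bigl(1-\tfrac1m\bigr)\geq\tfrac{Kc^2}{2m^2}(1+o(1))$. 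Since $1-\tfrac1m>0$, this holds for $d$ large enough and $\lambda_0$ small enough (the latter to justify both $u\leq\delta$ in the Taylor bound and the $o(1)$ terms). Choosing $c$, then $d$, then $\lambda_0$ in this order completes the construction.

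I expect the main obstacle to be precisely this last step: the naive linear ansatz $\bar G(\lambda)=1+c\lambda$ fails, because the quadratic error $\tfrac{K}{2}u^2$ produced by a single application of $f$ cannot be absorbed into a purely linear bound. The insight that rescues the quadratic super-solution is that replacing $\lambda$ by $\lambda/m$ contracts the quadratic coefficient by $1/m^2$, whereas the target only contracts it to $d/m$, so supercriticality leaves a strictly positive margin $d(1-1/m)$ available to soak up the new error. All constants can be tracked explicitly, so no compactness or limiting argument is needed; the whole proof is an elementary induction once the super-solution is in place.
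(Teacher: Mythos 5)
Your argument is correct and complete. The recursion $M_{n+1}(\lambda)=f(M_n(\lambda/m))$ for $M_n(\lambda)=\E\exp(\lambda Z_n/m^n)$ is valid (it holds in $[0,\infty]$ by conditioning on the first generation and monotone convergence, so no a priori finiteness is needed), the quadratic super-solution $\bar G(\lambda)=1+c\lambda+d\lambda^2$ does satisfy both required inequalities once $c$, $d$, $\lambda_0$ are chosen in that order, and the order of quantifiers is sound: the margin $d(1-1/m)>0$ supplied by supercriticality absorbs the Taylor error $\tfrac{K}{2}u^2$ provided $\lambda_0$ is taken small enough at the end (which also keeps $\bar G(\lambda_0)\leq 1+\delta<e^{\theta}$, so that $f$ and its second derivative are controlled on the relevant range). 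Markov's inequality then gives the claim with $c_1=\bar G(\lambda_0)$ and $c_2=\lambda_0$, uniformly in $n$. Note that the paper does not prove this statement at all: it quotes it from Athreya's paper on large deviations for branching processes, and even needs an accompanying remark invoking the Harris transformation because Athreya's Theorem~4 is proved under the extra hypothesis $\P(Z_1=0)=0$. Your proof is therefore a genuine gain in self-containedness: it rests on the same underlying mechanism (a uniform bound on the exponential moments of the normalized martingale $Z_n/m^n$ plus Chebyshev--Markov), but it is elementary, tracks all constants explicitly, and applies directly without the reduction to the case of no extinction.
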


\begin{remark} Note that the above theorem is proved in \cite{At} under the assumption $\P(Z_1=0)=0$, but
we may assume this by applying the Harris-transformation. For more on this theory see \cite{AN}.
\end{remark}

Fraser, Miao, and Troscheit~\cite[Theorem~5.1]{FMT} proved that $\dim_A \Gamma(\gamma)=1$ almost surely, provided $\Gamma(\gamma)\neq \emptyset$.
The following theorem claims that the modified Assouad dimension behaves differently, we have
$\dim_{MA}\Gamma(\gamma)=\dim_H \Gamma(\gamma)$ almost surely.

\begin{theorem} \label{t:H=MA} Let $\gamma\in (0,1)$. Then
\[\dim_{MA} \Gamma(\gamma)=\dim_H \Gamma(\gamma)=1-\gamma\]
almost surely, provided $\Gamma(\gamma)\neq \emptyset$.
\end{theorem}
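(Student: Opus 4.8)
The plan is to prove the theorem in three parts. First, the almost-sure value $\dim_H \Gamma(\gamma) = 1-\gamma$ (on the survival event) is classical and follows from the standard theory of fractal percolation; I would cite this rather than reprove it. By Fact~\ref{f:3d} we always have $\dim_H \Gamma(\gamma) \leq \dim_{MA} \Gamma(\gamma)$, so the entire content of the theorem reduces to establishing the reverse inequality $\dim_{MA} \Gamma(\gamma) \leq 1-\gamma$ almost surely on $\{\Gamma(\gamma) \neq \emptyset\}$. This is where the work lies: the ordinary Assouad dimension equals $1$ almost surely (by the Fraser--Miao--Troscheit result quoted just above), so I must exploit the gap $0 < r \leq R$ allowed in the definition of $\dim^{\eps}_A$ versus the full range in $\dim_A$.

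Next I would fix an arbitrary $\eps \in (0,1)$ and a target exponent $\gamma' > \gamma$ slightly larger than $\gamma$, and show $\dim^{\eps}_{MA} \Gamma(\gamma) \leq 1-\gamma'$ up to a controlled error; letting $\gamma' \to \gamma$ and $\eps \to 0+$ then yields the claim. The natural strategy is a first-moment / large-deviation estimate on the number of surviving dyadic intervals inside a given ball. Fix integers $m \leq (1-\eps)n$ and a generation-$m$ dyadic interval $I$ that survives (i.e.\ $I \in \Delta_m$). Conditioned on survival of $I$, the descendants of $I$ at generation $n$ form a Galton--Watson process run for $n-m$ generations, with offspring mean $m_\ast = 2p = 2^{1-\gamma} > 1$, branching in a binary tree. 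The expected number of survivors $N_n(\Gamma(\gamma) \cap I)$ of diameter $2^{-n}$ is therefore $m_\ast^{\,n-m} = 2^{(1-\gamma)(n-m)}$. The key quantitative input is Athreya's Theorem~\ref{t:A}: since the offspring distribution is $\mathrm{Binomial}(2,p)$ and hence has a finite exponential moment, there are constants $c_1,c_2$ with
\begin{equation*}
\P\!\left(N_n(\Gamma(\gamma)\cap I) \geq k\, 2^{(1-\gamma)(n-m)}\right) \leq c_1 e^{-c_2 k}
\end{equation*}
for every surviving $I \in \iI_m$. Taking $k = k_n \to \infty$ polynomially (say $k_n = n^2$) makes this probability summable even after a union bound over all $2^m \leq 2^n$ dyadic intervals $I$ of order $m$ and all pairs $m \leq (1-\eps)n$, because $e^{-c_2 n^2}$ beats the polynomial-in-$n$ count of $(m,n)$ pairs times the $2^n$ intervals. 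By Borel--Cantelli, almost surely there is a random $N_0$ so that for all $n \geq N_0$, all $m \leq (1-\eps)n$, and all $I \in \iI_m$ we have $N_n(\Gamma(\gamma) \cap I) \leq n^2\, 2^{(1-\gamma)(n-m)}$.

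This uniform counting bound is almost exactly the $(\beta,\eps)$-balanced condition with $\beta = 1-\gamma$: converting from dyadic-order scales $r = 2^{-n}$, $R = 2^{-m}$ to the covering number $N_r(B(x,R))$ (which differs from $N_n(\Gamma(\gamma)\cap I)$ only by a bounded multiplicative factor, since a ball of radius $R$ meets a bounded number of order-$m$ dyadic intervals), the estimate reads $N_r(B(x,R)) \leq C\,(\log_2(R/r))^2 (R/r)^{1-\gamma}$ whenever $r \leq r^{1-\eps} \leq R$ with $R,r$ small. The polynomial factor $(\log_2(R/r))^2$ is subexponential in $\log(R/r)$ and is absorbed into any exponent $1-\gamma+\delta$, so for every $\delta > 0$ we get $\dim^{\eps}_A \Gamma(\gamma) \leq 1-\gamma+\delta$ on this almost-sure event; a final countable intersection over a sequence $\delta \downarrow 0$ gives $\dim^{\eps}_A \Gamma(\gamma) \leq 1-\gamma$, hence $\dim^{\eps}_{MA} \Gamma(\gamma) \leq 1-\gamma$ trivially, and then $\dim_{MA}\Gamma(\gamma) = \sup_{\eps} \dim^{\eps}_{MA}\Gamma(\gamma) \leq 1-\gamma$.

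The main obstacle I anticipate is making the union bound robust to the \emph{conditioning on survival}. The estimate above is clean for a single prescribed $I \in \iI_m$ conditioned on $\{I \in \Delta_m\}$, but I need it simultaneously for every $I$ that happens to survive, and survival of different intervals is correlated through shared ancestors. The cleanest remedy is to phrase the Athreya bound unconditionally for each fixed $I \in \iI_m$ on the event $\{I \in \Delta_m\}$ (applying Theorem~\ref{t:A} to the subtree rooted at $I$, which is an independent Galton--Watson process given survival of $I$) and then union-bound over the at most $2^m$ choices of $I$; the subexponential tail $e^{-c_2 n^2}$ dominates the $2^n$ union cost with room to spare, so the correlations never need to be estimated directly — only the per-interval tail and a crude union bound are used. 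A secondary, purely bookkeeping, point is the passage between "number of surviving dyadic subintervals of $I$" and the intrinsic covering number $N_r(B(x,R))$ appearing in $\dim^{\eps}_A$; this is routine because dyadic intervals of a fixed order form a bounded-overlap cover of balls of the comparable radius, but it must be handled so that the $(R/r)^{1-\gamma}$ scaling is preserved with only a dimensionless constant.
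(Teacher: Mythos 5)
Your proposal is correct and follows essentially the same route as the paper: reduce to $\dim^{\eps}_A\Gamma(\gamma)\leq 1-\gamma$ via Fact~\ref{f:3d}, apply Athreya's tail bound (Theorem~\ref{t:A}) to the Galton--Watson subtree rooted at each surviving $I\in\iI_m$ with $k=n^2$, and run a crude union bound over the at most $n2^n$ pairs $(m,I)$ followed by Borel--Cantelli. The points you flag as potential obstacles (conditioning on survival, absorbing the polynomial factor $n^2$ into the exponent) are handled exactly as you propose in the paper's own argument.
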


\begin{proof}
It is well known that $\dim_H \Gamma(\gamma)=1-\gamma$ almost surely, provided $\Gamma(\gamma)\neq \emptyset$, see e.g.\
\cite[Theorem~2]{H}. By Fact~\ref{f:3d} it is enough to prove that, almost surely, we have $\dim_{MA} \Gamma(\gamma)\leq 1-\gamma$.
Let $0<\eps<1$ be arbitrarily fixed, it is enough to show that $\dim^{\eps}_{A} \Gamma(\gamma)\leq 1-\gamma$ with probability one.
Let $m\in \N^+$ and $I\in \iI_m$. For all $n>m$ let
\[N_n(I)=|\{J\in \iI_n: J\subset I \textrm{ and } J\in \Delta_n\}|\]
and define the event
\[\iA_n=\{N_n(I)\leq n^2 2^{(1-\gamma)(n-m)} \textrm{ for all } m\leq (1-\eps)n \textrm{ and } I\in \iI_m)\}.\]
It is enough to prove that, almost surely, $\iA_n$ holds for all large enough $n$.
Let $Z_n=|\Delta_n|$ for all $n\in \N^+$, then $\{Z_n\}_{n\geq 1}$ is a Galton-Watson branching process with $\E Z_1=2^{1-\gamma}>1$.
Clearly $\E e^{Z_1}<\infty$, so by Theorem~\ref{t:A} there are $c_1,c_2\in \R^+$ such that for all $n\in \N^+$ and $k\in \R^+$ we have
\begin{equation} \label{eq:k} \P(Z_n\geq k 2^{(1-\gamma)n})\leq c_1 e^{-c_2k}.
\end{equation}
For a given $I\in \iI_m$, provided $I\in \Delta_m$, the random variable $N_n(I)$ has the same distribution as $Z_{n-m}$.
Hence \eqref{eq:k} with $k=n^2$ implies that
\[\P(N_n(I)\geq n^2 2^{(1-\gamma)(n-m)})\leq c_1 e^{-c_2n^2}.\]
The number of pairs $(m,I)$ for which $m\leq (1-\eps)n$ and $I\in \iI_m$ is at most $n2^n$, so the probability of
the complement of $\iA_n$ satisfies
\[\P(\iA^c_n)\leq c_1n2^{n}e^{-c_2 n^2}.\]
Therefore $\sum_{n=1}^{\infty} \P(\iA^c_n)<\infty$,
and the Borel-Cantelli lemma yields that $\iA^c_n$ holds only for finitely many $n$. This completes the proof.
\end{proof}

\begin{definition} Let $(\mathcal{K},d_{H})$ be the set of compact subsets of
$[0,1]$ endowed with the \emph{Hausdorff metric}, that is, for each $K_1,K_2\in \mathcal{K}\setminus \{\emptyset\}$ we have
%%%%%%%%%%%%%%%%
\[d_{H}(K_1,K_2)=\min \left\{r: K_1\subset B(K_2,r) \textrm{ and } K_2\subset B(K_1,r)\right\},\]
where $B(A,r)=\{x\in \R: \exists y\in A \textrm{ such that } |x-y|\leq r\}$.
Let $d_H(\emptyset, \emptyset)=0$ and $d_H(K,\emptyset)=1$ for all $K\in \iK\setminus \{\emptyset\}$.
Then $(\mathcal{K},d_{H})$ is a compact metric space, see \cite[Theorem~4.26]{Ke}. Let $C[0,1]$ denote the set of continuous functions
$f\colon [0,1]\to \R$ endowed with the maximum norm. For $\gamma\in (0,1)$ and $n\in \N^+$
let $C^{\gamma}[0,1]$ and $C^{\gamma}_{n}[0,1]$ be the set of functions $f\in C[0,1]$ such that $f$ is
$\gamma$-H\"older continuous and $\gamma$-H\"older continuous with H\"older constant at most $n$, respectively.
For $E\subset X\times Y$ and $x\in X$ let $E_x=\{y\in Y: (x,y)\in E\}$, and for $y\in Y$ let $E^{y}=\{x\in X: (x,y)\in E\}$.
\end{definition}

\begin{lemma} \label{l:Borel} Assume that $0<\gamma<\alpha<1$ and let
\begin{align*} \Delta=\{&(f,K)\in C^{\gamma}[0,1]\times \iK: \textrm{there exist } C\in \iK \textrm{ and } \beta>\alpha \textrm{ such that} \\
&\dim_H C>1-\alpha \textrm{ and } \dim_H (K\cap C)>0, \textrm{ and } f|_{C} \textrm{ is $\beta$-H\"older}\}.
\end{align*}
Then $\Delta$ is a Borel set in $C[0,1]\times \iK$.
\end{lemma}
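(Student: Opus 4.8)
The plan is to eliminate the two existential quantifiers in the definition of $\Delta$ (over the witness set $C$ and over the exponent $\beta>\alpha$) by rewriting $\Delta$ as a countable union of sets whose only surviving quantifier is a projection along the \emph{compact} hyperspace $\iK$, and then to exploit the fact that projections along compact fibers are closed maps. \textbf{The main obstacle} is exactly this existential quantifier over $C$: a priori it only shows $\Delta$ is analytic, and the whole argument is engineered so that the quantifier becomes a projection of an $F_\sigma$ set along a compact space, where analyticity collapses back to Borelness.

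First I would reduce to rational parameters. Since $[0,1]$ is bounded, $\beta$-H\"older continuity implies $\beta'$-H\"older continuity for every $\beta'\le\beta$, so ``$f|_C$ is $\beta$-H\"older for some $\beta>\alpha$'' is equivalent to the same statement with $\beta$ rational and $\alpha<\beta$. Stratifying by the H\"older constant, ``$f|_C$ is $\beta$-H\"older with constant at most $n$'' means precisely $C\times C\subseteq R_{\beta,n}(f)$, where $R_{\beta,n}(f)=\{(x,y)\in[0,1]^2: |f(x)-f(y)|\le n|x-y|^\beta\}$. Likewise $\dim_H C>1-\alpha$ holds iff $\dim_H C>s$ for some rational $s>1-\alpha$, and $\dim_H(K\cap C)>0$ iff $\dim_H(K\cap C)>t$ for some rational $t>0$. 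Thus $\Delta=(C^\gamma[0,1]\times\iK)\cap\bigcup_{\beta,n,s,t}\Delta_{\beta,n,s,t}$ over a countable index set, where $\Delta_{\beta,n,s,t}$ consists of the $(f,K)$ admitting $C\in\iK$ with $C\times C\subseteq R_{\beta,n}(f)$, $\dim_H C>s$, and $\dim_H(K\cap C)>t$. As $C^\gamma[0,1]=\bigcup_n C^\gamma_n[0,1]$ is $F_\sigma$ in $C[0,1]$, it suffices to show each $\Delta_{\beta,n,s,t}$ is Borel.

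Next I would analyze the triples before projecting. I claim $H_{\beta,n}:=\{(f,C): C\times C\subseteq R_{\beta,n}(f)\}$ is closed in $C[0,1]\times\iK$: if $f_j\to f$ uniformly and $C_j\to C$ in the Hausdorff metric, then any $x,y\in C$ are limits of $x_j,y_j\in C_j$, and passing to the limit in $|f_j(x_j)-f_j(y_j)|\le n|x_j-y_j|^\beta$ gives $(x,y)\in R_{\beta,n}(f)$. For the dimension conditions I would prove that the Hausdorff content is upper semicontinuous: both $C\mapsto\iH^{s'}_\infty(C)$ on $\iK$ and $(K,C)\mapsto\iH^{s'}_\infty(K\cap C)$ on $\iK\times\iK$ are upper semicontinuous. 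The key observation is that for compact sets the content equals the infimum over \emph{finite open} covers, and if $U\supseteq K\cap C$ is open then $K_j\cap C_j\subseteq U$ for all large $j$ whenever $K_j\to K,\ C_j\to C$ (otherwise a convergent subsequence of escaping points would produce a point of $K\cap C$ outside $U$). Upper semicontinuity makes each $\{\iH^{s'}_\infty\ge 1/m\}$ closed, so $\{\dim_H C>s\}$ and $\{\dim_H(K\cap C)>t\}$ are $F_\sigma$. Intersecting with the closed set $H_{\beta,n}$, the set $\widetilde\Delta_{\beta,n,s,t}=\{(f,K,C): (f,C)\in H_{\beta,n},\ \dim_H C>s,\ \dim_H(K\cap C)>t\}$ is $F_\sigma$ in $C[0,1]\times\iK\times\iK$.

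Finally, $\Delta_{\beta,n,s,t}$ is the image of $\widetilde\Delta_{\beta,n,s,t}$ under the projection $\pi$ forgetting the last coordinate $C$. Because $\iK$ is compact, $\pi$ is a closed map: any convergent sequence in the image lifts to the product, and a subsequence of the corresponding fibers converges by compactness of $\iK$, so its limit lies in the image of a closed set. Writing $\widetilde\Delta_{\beta,n,s,t}=\bigcup_m F_m$ with $F_m$ closed and using $\pi(\bigcup_m F_m)=\bigcup_m\pi(F_m)$, each $\pi(F_m)$ is closed, so $\Delta_{\beta,n,s,t}=\pi(\widetilde\Delta_{\beta,n,s,t})$ is $F_\sigma$, hence Borel. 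Taking the countable union over $(\beta,n,s,t)$ and intersecting with the Borel set $C^\gamma[0,1]\times\iK$ then shows $\Delta$ is Borel. The decisive point is that the quantifier over $C$ ranges over the \emph{compact} space $\iK$: projecting along a non-compact fiber space would genuinely produce analytic, possibly non-Borel, sets, whereas closedness of compact-fiber projections preserves the $F_\sigma$ class throughout.
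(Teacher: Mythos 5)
Your proof is correct and follows essentially the same route as the paper: both arguments eliminate the existential quantifier over $C$ by writing $\Delta$ as a countable union of pieces, each obtained by projecting a closed set of triples $(f,K,C)$ along the compact hyperspace $\iK$, with the dimension conditions made closed via lower bounds on Hausdorff contents and the H\"older conditions via bounds on the constants. The paper packages this slightly differently (a single index $n$ and compactness of $C^{\gamma}_n[0,1]$, so each piece is outright compact and $\Delta$ is $\sigma$-compact), but the underlying mechanism is identical to yours.
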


\begin{proof}
For all $n\in \N^+$ define
\begin{align*}
\Delta_n=\{&(f,K)\in C^{\gamma}_n[0,1]\times \iK: \textrm{ there is a } C\in \iK \textrm{ such that}\\
&\iH^{1-\alpha+1/n}_{\infty}(C)\geq 1/n \textrm{ and } \iH^{1/n}_{\infty}(K\cap C)\geq 1/n, \textrm{ and} \\
&f|_{C} \textrm{ is $(\alpha+1/n)$-H\"older with H\"older constant at most $n$}\}.
\end{align*}
Since $C_n^{\gamma}[0,1]$ and $\iK$ are compact, it is easy to verify that $\Delta_n$ is compact for each $n\in \N^+$.
Clearly $\Delta=\bigcup_{n=1}^{\infty} \Delta_n$, thus $\Delta$ is $\sigma$-compact, so it is a Borel set.
\end{proof}

Now we are ready to give a new proof for Theorem~\ref{t:ABMP}.

\begin{proof}[Proof of Theorem~\ref{t:ABMP}] Assume that $0<\gamma<\alpha<1$ are fixed and
let $\Delta=\Delta(\gamma,\alpha)$ be the Borel set of Lemma~\ref{l:Borel}. First assume that
$(f,K)\in \Delta$. Then there exist $C\in \iK$ and $\beta>\alpha$ such that
$f|_{C}$ is $\beta$-H\"older continuous and $\dim_H(K\cap C)>0$.
By Fact~\ref{f:Holder} the set $E=K\cap C\subset K$ satisfies
\begin{equation} \label{eq:hfe1} \dim_H f(E)\leq (1/\beta) \dim_H E<(1/\alpha) \dim_H E.
\end{equation}
Let $\mu$ and $\nu$ be the distributions of $B$ on $C[0,1]$ and
of $\Gamma(1-\alpha)$ on $\iK$, respectively.
By Theorem~\ref{t:H=MA} we have $\dim_{MA} K\leq \alpha$ for $\nu$ almost every $K$.
Fix such a $K$, then Theorem~\ref{t:main} implies that for $\mu$ almost every $f$ for
all $E\subset K$ we have
\begin{equation} \label{eq:hfe2}
\dim_H f(E)=(1/\alpha) \dim_H E.\end{equation}
Thus \eqref{eq:hfe1} and \eqref{eq:hfe2} imply that $\mu(\Delta^{K})=0$.
As $\Delta$ is Borel, Fubini's theorem yields that $(\mu\times \nu)(\Delta)=0$.

As $(\mu\times \nu)(\Delta)=0$, Fubini's theorem and the fact that $B$ is $\gamma$-H\"older continuous almost surely imply
that $\nu(\Delta_f)=0$ for $\mu$ almost every $f$. Fix such an
$f$ and assume to the contrary that
there is a set $C\subset [0,1]$ such that $f|_{C}$ is $\beta$-H\"older continuous for some $\beta>\alpha$ and
$\dim_H C>1-\alpha$. As $f$ is still $\beta$-H\"older continuous on the closure of $C$, we may assume that $C$ is compact.
Then clearly $\{K\in \iK: \dim_H(K\cap C)>0\}\subset \Delta_f$, thus Theorem~\ref{t:Hawkes} yields that
\[\nu(\Delta_f)\geq \nu(\{K\in \iK: \dim_H(K\cap C)>0\})>0.\]
This is a contradiction, and the proof is complete.
\end{proof}

\subsection*{Acknowledgments}
We are indebted to Jinjun Li for pointing out a mistake in an earlier version of the paper.

\end{document}